\documentclass{article}
\usepackage{graphicx} 
\usepackage{amsthm}
\usepackage{amsfonts, amsmath, amssymb, amsthm}
\usepackage{cleveref}
\usepackage{url}
\usepackage[utf8x]{inputenc}
\usepackage{cancel}

\author{Quentin Le Hou\'erou \and Ludovic Levy Patey \and Keita Yokoyama}

\title{Conservation of Ramsey's theorem for pairs\\ and well-foundedness}
\date{\today}

\newtheorem{theorem}{Theorem}
\numberwithin{theorem}{section}
\newtheorem{maintheorem}[theorem]{Main Theorem}
\newtheorem{lemma}[theorem]{Lemma}
\newtheorem{question}[theorem]{Question}
\newtheorem{proposition}[theorem]{Proposition}
\newtheorem{statement}[theorem]{Statement}
\newtheorem{remark}[theorem]{Remark}
\newtheorem{definition}[theorem]{Definition}
\newtheorem{corollary}[theorem]{Corollary}

\makeatletter
\newtheorem*{rep@theorem}{\rep@title}
\newcommand{\newreptheorem}[2]{%
\newenvironment{rep#1}[1]{%
 \def\rep@title{#2 \ref{##1}}%
 \begin{rep@theorem}}%
 {\end{rep@theorem}}}
\makeatother

\newreptheorem{maintheorem}{Main Theorem}

\usepackage{xcolor}

\newcommand{\RCA}{\mathsf{RCA}}
\newcommand{\WKL}{\mathsf{WKL}}
\newcommand{\SRT}{\mathsf{SRT}}
\newcommand{\EM}{\mathsf{EM}}
\newcommand{\SEM}{\mathsf{SEM}}
\newcommand{\ADS}{\mathsf{ADS}}

\newcommand{\SADS}{\mathsf{SADS}}
\newcommand{\COH}{\mathsf{COH}}
\newcommand{\ACA}{\mathsf{ACA}}

\newcommand{\qvdash}{\operatorname{{?}{\vdash}}}
\newcommand{\nqvdash}{\operatorname{{?}{\nvdash}}}

\newcommand{\BSig}{\mathsf{B}\Sigma^0}
\newcommand{\ISig}{\mathsf{I}\Sigma^0}

\newcommand{\IDel}{\mathsf{I}\Delta^0}
\newcommand{\BCDe}{\mathsf{BC}\Delta^0}
\newcommand{\WF}{\mathsf{WF}}
\newcommand{\RT}{\mathsf{RT}}
\newcommand{\BME}{\mathsf{BME}}

\renewcommand{\L}{\mathcal{L}}
\newcommand{\C}{\mathcal{C}}
\newcommand{\R}{\mathcal{R}}
\renewcommand{\S}{\mathcal{S}}
\newcommand{\T}{\mathcal{T}}
\renewcommand{\P}{\mathcal{P}}
\newcommand{\M}{\mathcal{M}}
\newcommand{\Nc}{\mathcal{N}}
\renewcommand{\P}{\mathcal{P}}

\newcommand{\NN}{\mathbb{N}}

\newcommand{\uh}[0]{{\upharpoonright}}

\newcommand{\card}{\operatorname{card}}

\makeatletter
\DeclareFontEncoding{LS2}{}{\@noaccents}
\makeatother
\DeclareFontSubstitution{LS2}{stix}{m}{n}
\DeclareSymbolFont{arrows3}{LS2}{stixtt}{m}{n}
\DeclareMathSymbol{\smashtimes}{\mathbin}{arrows3}{"A4}

\usepackage{mathabx}
\newcommand{\nplus}{\dotplus}
\newcommand{\ntimes}{\dottimes}

\newcommand{\nsum}{\dot{\sum}}



\def\qt#1{``#1''}%

\begin{document}

\maketitle

\begin{abstract}
In this article, we prove that Ramsey's theorem for pairs and two colors is  $\Pi^1_1$-conservative over~$\RCA_0 + \BSig_2 + \WF(\epsilon_0)$ and   over~$\RCA_0 + \BSig_2 + \bigcup_n \WF(\omega^\omega_n)$. These results improve theorems from Chong, Slaman and Yang~\cite{chong2017inductive} and Ko{\l}odziejczyk and Yokoyama~\cite{kolo2021search} and belong to a long line of research towards the characterization of the first-order part of Ramsey's theorem for pairs. 
\end{abstract}

\section{Introduction}

We investigate the proof-theoretic strength of Ramsey's theorem for pairs and some of its consequences, by proving that they are $\Pi^1_1$-conservative over the base theory of Reverse Mathematics ($\RCA_0$), augmented with the collection principle for $\Sigma^0_2$ formulas ($\BSig_2$) and the assumption that some appropriate ordinals are well-founded ($\WF(\alpha)$). This article belongs to a long line of research towards the characterization of the first-order part of Ramsey's theorem for pairs, and is part of a larger foundational program known as Reverse Mathematics, and whose goal is to find optimal axioms for proving ordinary theorems. See Simpson~\cite{simpson_2009} or Dzhafarov and Mummert~\cite{dzhafarov2022reverse} for a reference book on Reverse Mathematics, and Hirschfeldt~\cite{hirschfeldt2017slicing} to a gentle introduction to the reverse mathematics of combinatorial principles.

Ramsey's theorem is the founding theorem of Ramsey's theory, a field of research which studies the conditions under which structure appears among a sufficiently large amount of data. We identify a set~$k$ with its ordinal $\{0, 1, \dots, k-1\}$.  Given a set~$X \subseteq \NN$ and some integer~$n \in \NN$, we let $[X]^n$ denote the set of all unordered $n$-tuples over~$X$. 
Given a coloring $f : [\NN]^n \to k$, a set~$H \subseteq \NN$ is \emph{$f$-homogeneous (for color~$i < k$)}  if $f(\sigma) = i$ for every~$\sigma \in [\NN]^n$. 

\begin{statement}[Ramsey's theorem]
Given $n, k \in \NN$, $\RT^n_k$ is the statement \qt{Every coloring $f : [\NN]^n \to k$ admits an infinite $f$-homogeneous set}.
\end{statement}

The proof-theoretic strength of Ramsey's theorem for triples and larger size of tuples is well-understood: $\RT^n_k$ is equivalent to the Arithmetic Comprehension Axiom ($\ACA_0$) over~$\RCA_0$ whenever $n \geq 3$ and $k \geq 2$ (see Jockusch~\cite{jockusch1972ramsey} for a computability-theoretic equivalence, and Simpson~\cite[Theorem III.7.6]{simpson_2009} for a proof-theoretic version of it). The first-order part of~$\ACA_0$ is precisely Peano Arithmetic.

Ramsey's theorem for pairs and two colors ($\RT^2_2$) can be phrased as stating, for every infinite graph, the existence of an infinite subset of vertices such that the induced subgraph is either a clique or an anticlique. $\RT^2_2$ is known to be strictly weaker than $\ACA_0$ over~$\RCA_0$ (see Seetapun and Slaman~\cite{seetapun1995strength}), and incomparable with weak K\"onig's lemma ($\WKL_0$), the restriction of K\"onig's lemma to binary trees (see Liu~\cite{liu2012rt22}).
The characterization of the first-order part of Ramsey's theorem for pairs and two colors is a long-standing open question.

\begin{definition}[Collection principle]
Given a family $\Gamma$ of formulas, $\mathsf{B}\Gamma$ is the statement \qt{For every formula~$\varphi \in \Gamma$, and every~$a \in \NN$, if $\forall x < a \exists y \varphi(x, y)$, then there is some~$b \in \NN$ such that $\forall x < a \exists y < b \varphi(x, y)$}.
\end{definition}

Ramsey's theorem for pairs implies $\BSig_2$ over~$\RCA_0$ (see Hirst~\cite{hirst1987combinatorics}). The following question is a major open problem in Reverse Mathematics:  

\begin{question}\label[question]{quest:main-question}
Is $\RT^2_2$ a $\Pi^1_1$-conservative extension of~$\RCA_0 + \BSig_2$?
\end{question}

\subsection{Well-foundedness and bounded monotone enumerations}

In Computability Theory, the ability to compute fast-growing functions can be seen as a unified framework to measure the strength of some Turing degrees, e.g., hyperimmune degrees, high degrees, and degrees admitting a modulus. In Proof Theory, the strength of a theory is better measured by the ability to prove the well-foundedness of some ordinals. The \emph{$\Pi^1_1$ proof-theoretic ordinal} of a theory~$T$ is the least ordinal~$\alpha$ such that $T$ does not prove its well-foundedness ($\WF(\alpha)$). For instance, the $\Pi^1_1$ proof-theoretic ordinals of~$\RCA_0$ and $\ACA_0$ are $\omega^\omega$ and~$\epsilon_0$, respectively (see Simpson~\cite{simpson_2009}).

The well-foundedness of ordinals naturally occured in the proof-theoretic analysis of Ramsey's theorem for pairs. Chong, Slaman and Yang~\cite{chong2017inductive} proved that $\RT^2_2$ does not imply the induction scheme for $\Sigma^0_2$ formulas ($\ISig_2$) over~$\RCA_0$ using a model of~$\RCA_0 + \BSig_2 + \neg \ISig_2$ which furthermore satisfied a principle called Bounded Monotone Enumeration ($\BME$).

\begin{definition}[\cite{chong2017inductive}]
Let $E$ be a procedure to recursively enumerate a finite branching enumerable tree. We say that $E$ is a \emph{monotone enumeration} if and only if the following conditions apply to its stage-by-stage behavior. 
\begin{enumerate}
    \item[(1)] The empty sequence is enumerated by E during stage 0.
    \item[(2)] Only finitely many sequences are enumerated by $E$ during any stage.
    \item[(3)] Suppose $T[s]$ is the tree enumerated by $E$ at the end of stage~$s$. Then any sequence enumerated by $E$ during stage $s + 1$ must extend some terminal node of $T[s]$.
\end{enumerate}
A monotone enumeration~$E$ is \emph{$b$-bounded} if every node in~$E$ has length at most~$b$.
\end{definition}

\begin{statement}
$\BME_{\star}$ is the statement \qt{Every bounded monotone enumeration is finite}.
\end{statement}

The statement $\BME$ is an iterated version of~$\BME_{\star}$. Its formal definition is rather technical and will not be used within this article.
Kreuzer and Yokoyama~\cite{kreuzer2016principles} proved that $\BME_{\star}$ is equivalent to $\WF(\omega^\omega)$ over~$\ISig_1$, and that $\BME$ is equivalent to $\bigcup_k \WF(\omega^\omega_k)$, where $\omega^\alpha_0 = \alpha$ and $\omega^{\alpha}_{k+1} = \omega^{\omega_k^\alpha}$. 

The principles $\BME$ and $\BME_\star$ naturally occur in the proofs of conservation of Ramsey's theorem for pairs. On the other hand, Patey and Yokoyama~\cite{patey2018proof} proved that $\RT^2_2$ is $\forall \Pi^0_3$-conservative over~$\RCA_0 + \BSig_2$, which implies in particular that the $\Pi^1_1$ proof-theoretic ordinal of~$\RT^2_2$ is $\omega^\omega$, hence that $\RT^2_2$ does not imply $\BME_\star$ over~$\RCA_0$.

In this article, we show that the separation of Chong, Slaman and Yang~\cite{chong2017inductive} of $\RT^2_2$ from~$\ISig_2$ only depends on the assumption that the original model satisfies $\BSig_2 + \BME$, by proving the following conservation theorems.

\begin{maintheorem}\label[maintheorem]{main:theorem1}
Fix~$n \geq 2$. Let~$T_n$ be either~$\BSig_n$ or~$\ISig_n$.
$\RCA_0 + T_n + \RT^2_2 + \WKL + \WF(\epsilon_0)$ is a $\Pi^1_1$-conservative extension of $\RCA_0 + T_n + \WF(\epsilon_0)$.
\end{maintheorem}

\begin{maintheorem}\label[maintheorem]{main:theorem2}
Fix~$n \geq 2$. Let~$T_n$ be either~$\BSig_n$ or~$\ISig_n$.
$\RCA_0 + T_n + \RT^2_2 + \WKL + \bigcup_{k \in \omega} \WF(\omega^\omega_k)$ is a $\Pi^1_1$-conservative extension of $\RCA_0 + T_n + \bigcup_{k \in \omega}\WF(\omega^\omega_k)$.
\end{maintheorem}

This strengthens a result from Ko{\l}odziejczyk and Yokoyama~\cite{kolo2021search} who showed that $\RCA_0 + \RT^2_2 + \BME$ is a $\Pi^0_4$-conservative extension of~$\RCA_0 + \BSig_2 + \BME$.
By Cholak, Jockusch and Slaman~\cite{cholak_jockusch_slaman_2001}, the result holds when~$\BSig_2 + \WF(\epsilon_0)$ is replaced by~$\ISig_2$. Thus, conservations over~$\RCA_0 + (\WF(\epsilon_0) \vee \ISig_2)$ and over~$\RCA_0 + (\bigcup_{n \in \omega}\WF(\omega^\omega_n) \vee \ISig_2)$ are the best known results.

In order to better understand the computational and proof-theoretic content of Ramsey's theorem for pairs, two main decompositions of $\RT^2_2$ into combinatorially weaker statements have been studied. These decompositions will be used to prove separate conservation theorems for simpler statements, and then combined to obtain our main theorems. We now detail those statements.

\subsection{Tournaments and linear orders}

Ramsey's theorem for pairs and two colors can be decomposed into the Erd\H{o}s-Moser theorem about infinite tournaments, and the Ascending Descending Sequence principle.

\begin{definition}
A \emph{tournament} on a domain~$D$ is an irreflexive binary relation $R \subseteq D^2$ such that for every~$a, b \in D$ with~$a \neq b$, exactly one of $R(a, b)$ and $R(b, a)$ holds. A tournament $R$ is \emph{transitive} if for every~$a, b, c \in D$, if $R(a, b)$ and $R(b, c)$ then $R(a, c)$.
\end{definition}

A subtournament of~$R \subseteq \NN^2$ is fully characterized by its domain and the parent tournament~$R$. We therefore identify subtournaments with infinite sets~$H \subseteq \NN$.
The following statement is known as the Erd\H{o}s-Moser theorem, and is an infinitary version of some theorem by Erd\H{o}s and Moser~\cite{erdos1964representation}.

\begin{statement}[Erd\H{o}s-Moser]
$\EM$ is the statement \qt{Every infinite tournament admits an infinite transitive subtournament}.
\end{statement}

There exists a one-to-one correspondence between a tournament~$R$ and a coloring $f : [\NN]^2 \to 2$, by letting $f(\{x, y\}) = 1$ if $(R(x, y) \leftrightarrow x <_\NN y)$. $\EM$ can the restated as \qt{for every coloring $f : [\NN]^2 \to 2$, there exists an infinite \emph{$f$-transitive} subset~$H$, that is, for every~$x, y, z \in H$ such that $x < y < z$ and every~$i < 2$, then if $f(\{x, y\}) = f(\{y, z\}) = i$, we have $f(\{x, z\}) = i$}.

\begin{statement}[Ascending Descending Sequence]
$\ADS$ is the statement \qt{Every infinite linear order admits an infinite ascending or descending subsequence}.
\end{statement}

Here again, there is a one-to-one correspondence between linear orders $\L = (\omega, <_\L)$, and transitive colorings $f : [\NN]^2 \to 2$, by letting $f(\{x, y\}) = 1$ if $(x <_\NN y \leftrightarrow x <_\L y)$. Then $\ADS$ can be restated as \qt{for every transitive coloring $f : [\NN]^2 \to 2$, there exists an infinite $f$-homogeneous set}.
Using the reformulations of $\EM$ and $\ADS$, the following proposition is immediate:

\begin{proposition}[Bovyking and Weiermann~\cite{bovykin2017strength}]
$\RCA_0 \vdash \RT^2_2 \leftrightarrow \EM \wedge \ADS$
\end{proposition}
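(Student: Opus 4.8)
The plan is to prove the two implications of $\RT^2_2 \leftrightarrow \EM \wedge \ADS$ separately over~$\RCA_0$, using the reformulations of~$\EM$ and~$\ADS$ in terms of transitive colorings given above.

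\textbf{From $\RT^2_2$ to $\EM \wedge \ADS$.} Fix a coloring $f : [\NN]^2 \to 2$. For~$\EM$, I would apply $\RT^2_2$ to~$f$ to obtain an infinite $f$-homogeneous set~$H$; since $H$ is in particular $f$-transitive (the transitivity condition is vacuously satisfied by a constant coloring), $H$ witnesses~$\EM$ for~$f$. For~$\ADS$, suppose $f$ is a transitive coloring coding a linear order~$\L$; apply $\RT^2_2$ to~$f$ to get an infinite $f$-homogeneous set~$H$, which by the correspondence is an infinite ascending (if homogeneous for color~$1$) or descending (if homogeneous for color~$0$) subsequence of~$\L$. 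Both arguments go through in~$\RCA_0$ since $\RT^2_2$ is applied as a black box and the verifications are arithmetical.

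\textbf{From $\EM \wedge \ADS$ to $\RT^2_2$.} Fix a coloring $f : [\NN]^2 \to 2$. First apply~$\EM$ to obtain an infinite $f$-transitive set~$H$. The point is that the restriction of~$f$ to~$[H]^2$ is then a transitive coloring, so relativizing the correspondence, it codes a linear order on~$H$; I would need to check in~$\RCA_0$ that this relation is genuinely a linear order on~$H$, i.e.\ that transitivity of~$f$ on~$[H]^2$ transfers to transitivity of the coded order (this is where the definition of $f$-transitivity, phrased with $x<y<z$, matters). Then apply~$\ADS$ to this linear order to get an infinite subset $G \subseteq H$ that is ascending or descending, hence $f$-homogeneous; since $G \subseteq H$ and $G$ is infinite, $G$ is the desired infinite $f$-homogeneous set for the original coloring.

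\textbf{Main obstacle.} The only subtlety, and the step I would be most careful about, is the bookkeeping for nested restrictions: one must confirm that $f \uh [H]^2$ really is a \emph{transitive} coloring in the technical sense so that~$\ADS$ applies, and that ``$f$-homogeneous on~$G$'' for the restricted coloring coincides with ``$f$-homogeneous on~$G$'' for the original~$f$ — which is immediate since homogeneity only refers to pairs from~$G$. All of this is elementary and provable in~$\RCA_0$, so as the excerpt indicates the proposition is indeed immediate from the reformulations; the argument requires no induction or comprehension beyond what is needed to form the relevant restricted colorings and orders.
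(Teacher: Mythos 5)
Your proposal is correct and follows essentially the same route as the paper: the forward direction observes that $\EM$ and $\ADS$ are special cases of $\RT^2_2$ (a homogeneous set being in particular transitive), and the reverse direction composes $\EM$ with $\ADS$ on the induced transitive coloring. The only cosmetic difference is that the paper re-indexes via the increasing enumeration $x_0 < x_1 < \dots$ of the transitive set to turn $f\uh[H]^2$ into a genuine $\ADS$-instance on all of $\NN$, which resolves the bookkeeping point you flag.
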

\begin{proof}
$\EM$ and $\ADS$ are both particular cases of~$\RT^2_2$, hence $\RCA_0 \vdash \RT^2_2 \rightarrow \EM \wedge \ADS$. On the other hand, given a coloring $f : [\NN]^2 \to 2$, first apply $\EM$ to obtain an infinite $f$-transitive subset~$X = \{x_0 < x_1 < \dots \}$. Let~$g : [\NN]^2 \to 2$ be the transitive coloring defined by $g(a, b) = f(x_a, x_b)$. By~$\ADS$, there exists an infinite $g$-homogeneous subset~$H \subseteq \NN$. The set $\{ x_a : a \in H \}$ is infinite and $f$-homogeneous.
\end{proof}

From a combinatorial viewpoint, the decomposition of $\RT^2_2$ into $\EM$ and $\ADS$ divides the difficulty of Ramsey's theorem for pairs as follows: the combinatorics of $\EM$ are very close to those of $\RT^2_2$, but without the disjunctive nature of~$\RT^2_2$. Thus, there is no need to work with pairing arguments. On the other hand, $\ADS$ has very simple combinatorics, and does not imply any form of compactness, but is a disjunctive statement.

From a reverse mathematical viewpoint, the decomposition is non-trivial, in that $\EM$ and $\ADS$ are both strictly weaker than~$\RT^2_2$ (see Lerman, Solomon and Towsner~\cite{lerman2013separating}).

From a proof-theoretic viewpoint, both statements imply~$\BSig_2$ over~$\RCA_0$ (see Hirschfeldt and Shore~\cite[Proposition 4.5]{Hirschfeldt2007CombinatorialPW} and Kreuzer~\cite[Proposition 16]{kreuzer2012primitive}), and $\RCA_0 + \ADS$ is known to be $\Pi^1_1$-conservative extension of~$\RCA_0 + \BSig_2$ (see Chong, Slaman and Yang~\cite{chong2021pi11}). By an amalgamation theorem of Yokoyama~\cite{yokoyama2010conservativity}, it follows that $\RCA_0 + \RT^2_2$ is $\Pi^1_1$-conservative over~$\RCA_0 + \BSig_2$ iff so is~$\RCA_0 + \EM$.

\subsection{Cohesiveness and stability}

The second (and historically the first) decomposition of~$\RT^2_2$ was done by Cholak, Jockusch and Slaman~\cite{cholak_jockusch_slaman_2001}
in their seminal article on Ramsey's theorem for pairs. It consists in reducing $\RT^2_2$ to a stable version of it ($\SRT^2_2$), using the Cohesiveness principle ($\COH$). Then, it is easy to see that $\SRT^2_2$ is equivalent to the infinite pigeonhole principle for $\Delta^0_2$ instances, which is a combinatorialy simple statement.

\begin{definition}
A coloring $f : [\NN]^2 \to k$ is \emph{stable} if for every~$x \in \NN$, $\lim_y f(x, y)$ exists. Given an infinite sequence of sets~$\vec{R} = R_0, R_1, \dots$, an infinite set~$C$ is \emph{$\vec{R}$-cohesive} if for every~$s \in \NN$, $C \subseteq^{*} R_s$ or $C \subseteq^{*} \overline{R}_s$, where~$\subseteq^{*}$ denotes inclusion up to finite changes.
\end{definition}

\begin{statement}[Stable Ramsey's theorem for pairs]
$\SRT^2_k$ denotes the restriction of $\RT^2_k$ to stable colorings.
\end{statement}

\begin{statement}[Cohesiveness]
$\COH$ is the statement \qt{Every infinite sequence of sets has a cohesive set}.
\end{statement}

The following equivalence is essentially due to Cholak, Jockusch and Slaman~\cite{cholak_jockusch_slaman_2001}, with the proof $\RCA_0 \vdash \RT^2_2 \to \COH$ fixed by Mileti~\cite[Appendix A]{mileti2004partition}.

\begin{proposition}
$\RCA_0 \vdash \RT^2_2 \leftrightarrow \SRT^2_2 \wedge \COH$.
\end{proposition}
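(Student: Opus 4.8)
The plan is to prove the two implications of the equivalence separately, working throughout in $\RCA_0$. The inclusion $\RT^2_2\to\SRT^2_2$ is immediate, since $\SRT^2_2$ is a syntactic restriction of $\RT^2_2$. For $\RT^2_2\to\COH$ --- the direction whose original argument was repaired by Mileti --- given a sequence $\vec R=(R_n)_{n\in\NN}$ I would attach to each $x$ its finite state $\sigma_x=\langle R_0(x),\dots,R_{x-1}(x)\rangle\in 2^x$ and define a $2$-coloring $c:[\NN]^2\to 2$ as follows: for $x<y$, if there is no $m<x$ with $R_m(x)\neq R_m(y)$ put $c(x,y)=0$, and otherwise, letting $m$ be least such, put $c(x,y)=R_m(y)$. (As $\RT^2_2$ proves $\RT^2_k$ for every standard $k$, one could equally well use a $3$-coloring that keeps the ``no disagreement'' case separate; the $2$-color version is a little cleaner.)

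Applying $\RT^2_2$ to $c$ yields an infinite homogeneous set $H=\{x_0<x_1<\cdots\}$, and the point is that $H$ is $\vec R$-cohesive whatever the homogeneous color $\varepsilon$ is. Indeed, fix $m$, put $\ell=m+1$, and consider $x_i<x_j$ in $H$ with $x_i\geq\ell$: homogeneity for color $\varepsilon$ says that whenever $x_i,x_j$ first disagree at some $m'<x_i$ one has $R_{m'}(x_j)=\varepsilon$, so $\sigma_{x_j}\uh\ell\leq_{lex}\sigma_{x_i}\uh\ell$ if $\varepsilon=0$ and $\sigma_{x_j}\uh\ell\geq_{lex}\sigma_{x_i}\uh\ell$ if $\varepsilon=1$ (with equality on the first $\ell$ bits when the disagreement occurs at $\geq\ell$, and, in the case $\varepsilon=0$, also when there is no disagreement below $x_i$). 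Hence $(\sigma_{x_i}\uh\ell)_i$ is eventually monotone in the lexicographic order on the finite set $2^\ell$, so it is eventually constant; in particular $(R_m(x_i))_i$ is eventually constant, which is exactly $H\subseteq^{*}R_m\vee H\subseteq^{*}\overline{R_m}$. The construction of $c$, the extraction of $H$, and the ``eventually monotone implies eventually constant'' step all go through in $\RCA_0$.

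For the reverse implication $\SRT^2_2\wedge\COH\to\RT^2_2$ I would run the classical two-step reduction. Given $f:[\NN]^2\to 2$, apply $\COH$ to $\vec R=(R_x)_{x\in\NN}$ where $R_x=\{y:f(x,y)=1\}$ to obtain an infinite $\vec R$-cohesive set $C$, and enumerate it in increasing order as $C=\{c_0<c_1<\cdots\}$. Define $g:[\NN]^2\to 2$ by $g(i,j)=f(c_i,c_j)$ for $i<j$. Cohesiveness of $C$ is precisely the statement that for each fixed $i$ the value $f(c_i,c)$ is eventually constant as $c$ runs over $C$, i.e.\ that $\lim_j g(i,j)$ exists; thus $g$ is stable. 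Now $\SRT^2_2$ gives an infinite $g$-homogeneous set $H$, say for color $\varepsilon$, and $\{c_i:i\in H\}$ is infinite (as $i\mapsto c_i$ is injective) and $f$-homogeneous for color $\varepsilon$, since $f(c_i,c_j)=g(i,j)=\varepsilon$ for all $i<j$ in $H$. All the sets involved are $\Delta^0_1$ in $f$, $C$ and $H$, so this is carried out in $\RCA_0$.

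The reverse implication is entirely routine once $\COH$ and $\SRT^2_2$ are available; the only delicate part is $\RT^2_2\to\COH$, namely producing a single coloring all of whose infinite homogeneous sets are \emph{provably} cohesive in $\RCA_0$ --- the gap that Mileti closed. The subtle observation is that an infinite homogeneous set need not consist of numbers sharing a common ``state path'', but homogeneity does force the states $\sigma_{x_i}$ to become lexicographically monotone below every fixed length, and that monotonicity already pins down $\lim_i R_m(x_i)$ for each $m$; keeping the ``monotone into a finite set is eventually constant'' argument at the level of $\Sigma^0_1$-induction is what keeps the whole proof inside $\RCA_0$.
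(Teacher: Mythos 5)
Your reverse direction ($\SRT^2_2 \wedge \COH \to \RT^2_2$) is correct and is essentially identical to the proof given in the paper: pre-process $f$ with a cohesive set for $R_x = \{y : f(x,y)=1\}$, observe that the induced coloring on the cohesive set is stable, and apply $\SRT^2_2$.

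For the forward direction the paper simply writes that the proof ``will be omitted, as it will be of no use in the article,'' deferring to Mileti's appendix for the repaired argument $\RCA_0 \vdash \RT^2_2 \to \COH$. You supply that argument in full, and it checks out: homogeneity of your state-comparison coloring $c$ forces, for each fixed $\ell$, the truncated states $\sigma_{x_i}\uh\ell$ of the homogeneous set to be eventually lexicographically monotone (non-increasing for color $0$, non-decreasing for color $1$, with the ``no disagreement below $x_i$'' case harmlessly absorbed into color $0$), and a monotone sequence in the finite linear order $2^\ell$ is eventually constant by the $\Sigma^0_1$ least-number principle available in $\RCA_0$; this pins down $\lim_i R_m(x_i)$ for every $m < \ell$, including nonstandard $m$, which is exactly cohesiveness. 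So your write-up proves strictly more than the paper does at this point, at the cost of the extra page of argument; the paper's choice to omit it is purely economical, since only the backward implication is used later.
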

\begin{proof}
The proof of the forward direction will be omitted, as it will be of no use in the article. Let us prove that $\RCA_0 \vdash \SRT^2_2 \wedge \COH \to \RT^2_2$.
Given a coloring $f : [\NN]^2 \to 2$, define the sequence of sets~$\vec{R} = R_0, R_1, \dots$ by $R_x = \{ y : f(x, y) = 1 \}$. Given an infinite $\vec{R}$-cohesive set~$C = \{ x_0 < x_1 < \dots \}$, let $g : [\NN]^2 \to 2$ be defined by $g(\{a, b\}) = f(\{x_a, x_b\})$. By cohesiveness of~$C$, the coloring~$g$ is stable, so by $\SRT^2_2$, there is an infinite $g$-homogeneous set~$H$. 
The set $\{ x_a : a \in H \}$ is infinite and $f$-homogeneous.
\end{proof}

From a reverse mathematical viewpoint, the decomposition of~$\RT^2_2$ into~$\SRT^2_2$ and~$\COH$ is also non-trivial (see Hirschfeldt et al.~\cite{hirschfeldt2008strength} and Monin and Patey~\cite{monin2021srt22}).

The computability-theoretic content of the cohesiveness principle is well-understood: it admits a maximally complex instance, which is the sequence of all primitive recursive sets. A cohesive set for this sequence is called \emph{p-cohesive}. For every uniformly computable sequence of sets~$\vec{R}$, every $p$-cohesive set computes an $\vec{R}$-cohesive set. Moreover, a Turing degree bounds a p-cohesive set iff its jump is PA over~$\mathsf{0}'$. See Jockusch and Stephan~\cite{jockusch1993cohesive}.

The idea of using cohesiveness to transform a coloring into a stable one can be applied to $\EM$ and $\ADS$. We call $\SEM$ and $\SADS$ the restriction of $\EM$ and $\ADS$ to stable tournaments and linear orders, respectively.
Stable linear orders can be of three types: $\omega + \omega^{*}$, $\omega + k$ and $k + \omega^{*}$, for some~$k \in \omega$. The two former cases are computably true, so the only interesting case if whenever it is of order type~$\omega + \omega^{*}$. This is why~$\SADS$ is often defined as the statement \qt{Every linear order of type~$\omega + \omega^{*}$ admits an infinite ascending or descending sequence}.

\bigskip
Combining the two kind of decompositions, we shall use the following equivalence:

\begin{proposition}\label[proposition]{prop:rt22-sem-sads-coh}
$\RCA_0 \vdash \RT^2_2 \leftrightarrow \SEM \wedge \SADS \wedge \COH$
\end{proposition}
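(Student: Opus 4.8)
The plan is to derive Proposition~\ref{prop:rt22-sem-sads-coh} by chaining together the two decompositions already established in the excerpt, rather than by a direct combinatorial argument. We already have $\RCA_0 \vdash \RT^2_2 \leftrightarrow \EM \wedge \ADS$ and (implicitly, via the cohesiveness machinery) the tools to split off a stability step. So the real content is to verify that $\EM$ reduces to $\SEM \wedge \COH$ and that $\ADS$ reduces to $\SADS \wedge \COH$, both over $\RCA_0$, and then to note that each of $\SEM$, $\SADS$, $\COH$ is a consequence of $\RT^2_2$.

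For the forward direction, $\RCA_0 \vdash \RT^2_2 \to \SEM \wedge \SADS \wedge \COH$: the statements $\SEM$ and $\SADS$ are restrictions of $\EM$ and $\ADS$ respectively, hence restrictions of $\RT^2_2$ by the Bovykin--Weiermann proposition, so $\RT^2_2$ proves them; and $\RT^2_2 \to \COH$ is the Cholak--Jockusch--Slaman implication (with Mileti's fix) cited above. For the reverse direction, I would argue as follows. Given an infinite tournament $R$, view it as a coloring $f : [\NN]^2 \to 2$ as in the excerpt. Set $R_x = \{ y : R(x,y) \}$ and apply $\COH$ to the sequence $\vec{R}$ to get an infinite cohesive set $C = \{x_0 < x_1 < \cdots\}$. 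The induced subtournament on $C$, transported to $\NN$ via $a \mapsto x_a$, is stable: for each $a$, cohesiveness of $C$ forces $\lim_b R(x_a, x_b)$ to exist. Apply $\SEM$ to this stable tournament to obtain an infinite transitive subtournament $H$; then $\{ x_a : a \in H \}$ is an infinite transitive subtournament of $R$. This establishes $\RCA_0 \vdash \SEM \wedge \COH \to \EM$. The argument for $\SADS \wedge \COH \to \ADS$ is the same, working with the linear-order reformulation of $\ADS$: a cohesive set relative to the sequence $R_x = \{ y : x <_\L y\}$ yields a sub-order that is stable, i.e., of type $\omega + \omega^*$, $\omega + k$, or $k + \omega^*$, to which $\SADS$ applies. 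Combining, $\RCA_0 \vdash \SEM \wedge \SADS \wedge \COH \to \EM \wedge \ADS \to \RT^2_2$.

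The remaining care is bookkeeping inside $\RCA_0$: I should check that the maps $a \mapsto x_a$ and the colorings $g(\{a,b\}) = f(\{x_a, x_b\})$ are $\Delta^0_1$-definable from the given data (they are, since $C$ is an infinite set and can be enumerated in increasing order), and that "stable" in the conclusion of the cohesiveness step is literally the hypothesis needed by $\SEM$ and $\SADS$ — here the only subtlety is the $\SADS$ convention, where one restricts to type $\omega + \omega^*$ because the other two stable types are computably true, so no extra work is needed. I would also remark that this proof is essentially just a simultaneous run of the two proofs already displayed in the excerpt (the $\EM \wedge \ADS$ decomposition and the $\SRT^2_2 \wedge \COH$ decomposition), so it can be stated quite compactly.

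**The main obstacle**, such as it is, is not mathematical depth but making sure the cohesiveness-to-stability passage is spelled out uniformly for tournaments and for linear orders in a way that feeds exactly into the hypotheses of $\SEM$ and $\SADS$; the analogous step for $\SRT^2_2$ is already written out in the excerpt, so the adaptation is routine, and I expect the whole proof to fit in a short paragraph mirroring the earlier two.
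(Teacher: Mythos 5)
Your proof is correct and follows exactly the route the paper intends: the paper states this proposition without proof, presenting it as the combination of the two displayed decompositions ($\RT^2_2 \leftrightarrow \EM \wedge \ADS$ and the cohesiveness-to-stability step), which is precisely what you carry out. The only external ingredient, $\RT^2_2 \to \COH$, is cited the same way in both.
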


We will therefore prove conservation results about $\WKL, \COH, \SEM$ and $\SADS$ independently, and use the previous decomposition to obtain our main theorems.

\subsection{Definitions and notation}

Since $[X]^n$ is in one-to-one correspondence with the set of all $n$-tuples in increasing order, given a coloring $f : [\NN]^n \to k$, we might omit the brackets, and simply write $f(x_0, x_1, \dots, x_{n-1})$ instead of $f(\{x_0, x_1, \dots, x_{n-1}\})$, assuming that $x_0 < \dots < x_{n-1}$.

A \emph{$\Pi^1_2$ problem} $\mathsf{P}$ is a formula $\forall X[\varphi(X) \rightarrow \exists Y \psi(X, Y)]$ where $\varphi$ and $\psi$ are arithmetic formulas. A set~$X$ such that $\varphi(X)$ holds is a \emph{$\mathsf{P}$-instance}, and a $Y$ such that $\psi(X, Y)$ holds is a \emph{$\mathsf{P}$-solution} to~$X$. In particular, $\RT^2_2$, $\EM$ and $\ADS$ are $\Pi^1_2$-problems. For example, an $\RT^2_2$-instance is a coloring~$f : [\NN]^2 \to 2$
and an $\RT^2_2$-solution to~$f$ is an infinite $f$-homogeneous set.
\bigskip

\textbf{Binary strings}. We let $2^{<\NN}$ denote the set of all finite binary strings, and $2^\NN$ denote the class of all infinite binary sequences. Note that $2^\NN$ is in bijection with $\P(\NN)$ and both are usually identified.
Finite binary strings are written with greek letters~$\sigma, \tau, \mu, \dots$. The length of a binary string $\sigma$ is written~$|\sigma|$, and the concatenation of two binary strings $\sigma$ and $\tau$ is written $\sigma \cdot \tau$. Given an infinite binary sequence~$X \in 2^\NN$ and $n \in \NN$, we write $X \uh_n$ for its initial segment of length~$n$. A string~$\sigma$ is a prefix of a string~$\tau$, written $\sigma \preceq \tau$, if there is some~$\mu$ such that $\sigma \cdot \mu = \tau$.
\bigskip

\textbf{Finite sets}. Finite binary strings, seen as finite characteristic functions, are often identified with finite sets, that is, $\sigma$ is identified with $\{ n < |\sigma| : \sigma(n) = 1 \}$. Based on this correspondence, we extend the set-theoretic notations to binary strings, and let for example $\sigma \cup \rho$ denote the binary string of length~$\max(|\sigma|, |\rho|)$, and such that $(\sigma \cup \rho)(n) = 1$ iff $\sigma(n) = 1$ or $\rho(n) = 1$.
In particular, one shall distinguish the cardinality $\card \sigma = \card \{ n < |\sigma| : \sigma(n) = 1 \}$ of a string seen as a set, from its length~$|\sigma|$.
\bigskip

\textbf{Theories.}
Let $\ISig_n$ be the following induction scheme for every~$\Sigma^0_n$ formula~$\phi$:
$$
\phi(0) \wedge \forall x[\phi(x) \rightarrow \phi(x+1)] \rightarrow \forall x \phi(x)
$$
Let~$\Delta^0_n$-$\mathsf{CA}$ be the following comprehension scheme for every~$\Sigma^0_n$ formula~$\phi$ and every~$\Pi^0_n$ formula~$\psi$:
$$
\forall x[\phi(x) \leftrightarrow \psi(x)] \rightarrow \exists A \forall x[x \in A \leftrightarrow \phi(x)]
$$
$\RCA_0^*$ denotes the theory of Robinson arithmetic ($Q$) augmented with the comprehension axiom for~$\Delta^0_1$ predicates ($\Delta^0_1$-$\mathsf{CA}$), the collection scheme for $\Sigma^0_1$ formulas ($\BSig_1$) and the totality of the exponentiation function~($\mathsf{exp}$).
$\RCA_0$ is the theory of $Q + \Delta^0_1\mbox{-}\mathsf{CA} + \ISig_1$. Note that $\RCA_0$ entails~$\RCA_0^*$. The theory~$\RCA_0$ is the base theory of Reverse Mathematics, but $\RCA_0^*$ is sometimes more convenient to use, as shown by Belanger~\cite{Blanger2022ConservationTF}.
\bigskip

\textbf{Models.}
A \emph{model} is a second-order structure $\M = (M, S, +_M, \times_M, <_M)$, where $M$ is the first-order part, that is, the set of integers and $S \subseteq \P(M)$ is the second-order part. We will usually omit the operations, and simply write $\M = (M, S)$.
A model $\M = (M, S)$ is \emph{topped} if there is some set~$X \in S$ such that every~$Y \in S$ is $\Delta^0_1(X)$. A model $\Nc = (N, T)$ is an \emph{$\omega$-submodel} of $\M = (M, S)$ if $N = M$ and $T \subseteq S$. Note that the notion of $\omega$-submodel must not be confused with the notion of $\omega$-model, in which $M = \omega$, that is, is the set of all standard integers.

Given a model~$\M = (M, S)$ and a set~$G \subseteq M$, we write $\M[G]$ for the model whose first-order part is $M$, and whose second-order part consists of all $\Delta^0_1(X, G)$ sets for~$X \in S$. Therefore, if $\M \models \RCA_0$, and $\M \cup \{G\} \models \ISig_1$, then $\M[G] \models \RCA_0$.

A set~$X$ is \emph{$M$-bounded} if there is some~$x \in M$ such that $\forall y \in X,\ y \leq x$.
A set~$X$ is \emph{$M$-finite} or \emph{$M$-coded} if there exists some~$x \in M$ such that $x = \sum_{n \in X} 2^n$.

Given two sets~$X$ and $Y$, we write $X \gg Y$ to say that $X$ is of PA degree over~$Y$. More formally, it means that $\Delta^0_1(Y)$ infinite binary tree admits a $\Delta^0_1(X)$ path.
\bigskip

\textbf{Ordinals.}
In this article, we shall work with ordinals smaller than~$\epsilon_0$ in models of weak arithmetic.
By Cantor's normal form theorem, every ordinal $\alpha$ below $\epsilon_0$ can be written uniquely as $\alpha = \omega^{\gamma_1}n_1 + \dots + \omega^{\gamma_k}n_k$ for some ordinals $\gamma_k < \dots < \gamma_1 < \alpha$ and positive integers $n_1,\dots, n_k$. So, every such ordinal can be represented as a finite rooted tree and therefore be coded by an integer which will be called its "index". For a suitable encoding of the finite rooted trees, the cantor normal form of any ordinal below $\epsilon_0$ and the standard operations on ordinals (sums, product, comparaison) are computable from their indexes. See H\'ajek and Pudl\'ak~\cite[Chapter~II.3]{hajek2017metamathematics} for a formal study of ordinals in the context of weak arithmetic.
 
\subsection{Structure of the paper}

In \Cref{sect:bsig-wfalpha}, we develop some basic techniques about $\Pi^1_1$-conservation over~$\RCA_0 + \BSig_2 + \WF(\alpha)$. We prove in particular that $\RCA_0 + \BSig_2 + \WF(\alpha) + \Gamma$ is a $\Pi^1_1$-conservative extension of~$\RCA_0 + \BSig_2 + \WF(\alpha)$ for~$\Gamma \in \{\COH, \WKL\}$ and $\alpha$ any primitive recursive ordinal.

In \Cref{sect:bsig2-epsilon0}, we handle the case of~$\SADS$ and~$\SEM$ which does not admit such a direct preservation of well-foundedness. In these cases, one needs to assume $\WF(\sup_{\beta < \alpha} \omega^{\beta \times \omega})$ to obtain an $\omega$-extension satisfying $\WF(\alpha)$, where~$\alpha \leq \epsilon_0$. We prove in particular that $\RCA_0 + \BSig_2 + \WF(\epsilon_0) + \Gamma$ is a $\Pi^1_1$-conservative extension of~$\RCA_0 + \BSig_2 + \WF(\epsilon_0)$ for~$\Gamma \in \{\SADS, \SEM\}$.

In \Cref{sect:conservation-rt22}, we combine the previous conservation theorems to obtain our main theorems.
Last, in \Cref{sect:open-questions}, we state some remaining open questions.

\section{Conservation over $\BSig_2 + \WF(\alpha)$}\label[section]{sect:bsig-wfalpha}

In this section, we introduce the framework for preserving $\WF(\alpha)$ for an ordinal~$\alpha \leq \epsilon_0$ over~$\RCA_0 + \BSig_2$. We then prove that $\RCA_0 + \BSig_2 + \WF(\alpha) + \WKL$ and $\RCA_0 + \BSig_2 + \WF(\alpha) + \COH$ are both~$\Pi^1_1$-conservative extensions of $\RCA_0 + \BSig_2 + \WF(\alpha)$.

The proofs of conservation will all follow the same pattern: Given a countable topped model  $\M = (M,S) \models \RCA_0 + \BSig_2 + \WF(\alpha)$, and an instance $I \in S$ of our problem $\mathsf{P}$, we will show the existence of a $\mathsf{P}$-solution $G \subseteq M$ of~$I$ such that $\M[G] \models \RCA_0 + \BSig_2 + \WF(\alpha)$. Then, the conservation result follows from a standard argument that we give for the sake of completeness:



\begin{theorem}\label[theorem]{theorem:problem-wf-cons}
Let $\mathsf{P}$ be a $\Pi_2^1$ problem and $\alpha \leq \epsilon_0$ be an ordinal such that for every countable topped model $\M = (M,S) \models \RCA_0 + \BSig_2 + \WF(\alpha)$ and for every $\mathsf{P}$-instance $I \in \M$, there exists a $\mathsf{P}$-solution $G \subset M$ of $I$ such that $\M[G] \models \RCA_0 + \BSig_2 + \WF(\alpha)$. Then $\RCA_0 + \BSig_2 + \WF(\alpha) + \mathsf{P}$ is $\Pi_1^1$-conservative over $\RCA_0 + \BSig_2 + \WF(\alpha)$.
\end{theorem}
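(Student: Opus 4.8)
The plan is to run the standard Henkin-style model-construction argument. Suppose toward a contradiction that $\RCA_0 + \BSig_2 + \WF(\alpha) + \mathsf{P}$ is not $\Pi^1_1$-conservative over $\RCA_0 + \BSig_2 + \WF(\alpha)$; then there is a $\Pi^1_1$ sentence $\sigma$ provable from the former but not the latter, so there is a countable model $\M_0 = (M, S_0) \models \RCA_0 + \BSig_2 + \WF(\alpha) + \neg\sigma$. First I would replace $\M_0$ by a countable \emph{topped} model with the same first-order part still satisfying all of these: since $\neg\sigma$ is $\Sigma^1_1$, it is witnessed by a single set $W \in S_0$, and one takes the model whose second-order part consists of all sets $\Delta^0_1$ in $W$ together with enough sets to keep $\RCA_0$; more carefully, one iterates to build an increasing $\omega$-chain of countable topped submodels, but the key point is just that there exists a countable \emph{topped} $\M = (M, S) \models \RCA_0 + \BSig_2 + \WF(\alpha) + \neg\sigma$.

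Next I would use a back-and-forth / bookkeeping construction to build an $\omega$-chain of countable topped models
$$
\M = \M_0 \subseteq \M_1 \subseteq \M_2 \subseteq \cdots,
$$
all with the same first-order part $M$, such that each $\M_i \models \RCA_0 + \BSig_2 + \WF(\alpha)$, and such that for every $\mathsf{P}$-instance $I$ appearing in some $\M_i$ there is a later $\M_j$ containing a $\mathsf{P}$-solution of $I$. This is where the hypothesis of the theorem is invoked: given $\M_i$ (countable and topped) and an enumerated $\mathsf{P}$-instance $I \in \M_i$, apply the assumption to get a $\mathsf{P}$-solution $G \subseteq M$ with $\M_i[G] \models \RCA_0 + \BSig_2 + \WF(\alpha)$, and set $\M_{i+1} = \M_i[G]$, which is again countable and topped (topped by the join of the top of $\M_i$ with $G$). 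Since $M$ is countable there are only countably many instances to handle across the whole chain, so a standard dovetailing ensures every instance that ever appears eventually gets a solution.

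Finally, let $\M_\omega = (M, \bigcup_i S_i)$ be the union of the chain. I would check that $\M_\omega \models \RCA_0 + \BSig_2 + \WF(\alpha) + \mathsf{P}$. For $\mathsf{P}$: any $\mathsf{P}$-instance of $\M_\omega$ lies in some $\M_i$ (as $\Phi$ is arithmetic and the first-order part is fixed, $\Phi(I)$ is absolute through the chain), hence has a solution in some $\M_j \subseteq \M_\omega$, and $\Psi$ is likewise absolute. For $\RCA_0 + \BSig_2$: these are consequences of the arithmetic part together with $\Delta^0_1$-comprehension, and since $\RCA_0$ and $\BSig_2$ are preserved at each finite stage and their axioms only quantify over finitely many set parameters at a time, they persist in the union (this uses that $\BSig_2$ and $\ISig_1$ are $\Pi^1_1$-ish schemes whose instances are "local" to finitely many sets, so a union of an $\omega$-chain of models of them is again a model). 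For $\WF(\alpha)$: an ill-founded descending sequence through $\alpha$ in $\M_\omega$ would be a single set lying in some $\M_i$, contradicting $\M_i \models \WF(\alpha)$ — here it is essential that $\WF(\alpha)$ is a $\Pi^1_1$ statement, so it is preserved by $\omega$-unions. Likewise $\neg\sigma$, being $\Sigma^1_1$, is preserved upward from $\M_0$, so $\M_\omega \models \neg\sigma$. Thus $\M_\omega \models \RCA_0 + \BSig_2 + \WF(\alpha) + \mathsf{P} + \neg\sigma$, contradicting $\RCA_0 + \BSig_2 + \WF(\alpha) + \mathsf{P} \vdash \sigma$.

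The routine points are the absoluteness of arithmetic formulas through the chain and the preservation of each scheme by $\omega$-unions; the one step deserving care is the very first, namely passing to a \emph{topped} countable model without losing $\WF(\alpha)$ or $\neg\sigma$ — but this is classical, since $\neg\sigma$ needs only one set parameter and $\WF(\alpha)$ is downward absolute to any submodel containing the relevant descending sequences (of which, being false, there are none to worry about).
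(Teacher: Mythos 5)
Your proposal is correct and follows essentially the same route as the paper: obtain a countable topped countermodel of the $\Pi^1_1$ sentence via completeness and Löwenheim–Skolem, iterate the hypothesis along an $\omega$-chain of topped $\omega$-extensions with bookkeeping so every $\mathsf{P}$-instance is eventually solved, and observe that $\RCA_0$, $\BSig_2$, $\WF(\alpha)$ and the $\Sigma^1_1$ failure of the sentence all persist in the union. The extra detail you give on absoluteness and on cutting down to a topped model is exactly the content the paper leaves implicit.
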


\begin{proof}
Assume $\RCA_0 + \BSig_2 + \WF(\alpha) \not \vdash \forall X \phi(X)$ for $\phi$ an arithmetic formula. Then by the completeness theorem and the downward Löwenheim–Skolem theorem, there is a countable model $\M = (M,S) \models \RCA_0 + \BSig_2 + \WF(\alpha) + \neg \phi(A)$ with $A \in S$. We can furthermore assume that $\M$ is topped by $A$.

From our assumption, we can build a sequence $\M = \M_0 \subseteq \M_1 = (M,S_1) \subseteq \M_2 = (M,S_2) \subseteq \dots$ of countable topped model of $\RCA_0 + \BSig_2 + \WF(\alpha)$ such that every $\mathsf{P}$-instance appearing in one of the $\M_i$ will eventually have a solution in one $\M_j$. Then $\bigcup_{n < \omega} \M_n$ is a model of $\RCA_0 + \BSig_2 + \WF(\alpha) + \mathsf{P} + \neg \phi(A)$. So $\RCA_0 + \BSig_2 + \WF(\alpha) + \mathsf{P} \not \vdash \forall X \phi(X)$.
\end{proof}

\subsection{Conservation over~$\BSig_2$}

There exist two main methods for extending a model $\M$ into a model $\M[G]$: by external forcing, and by an internal construction. In the case of conservation theorems over~$\BSig_2 + \neg \ISig_2$, one arguably needs to use an internal construction. Indeed, using an isomorphism theorem for~$\WKL^*_0$, Fiori-Carones et al~\cite{fiori2021isomorphism} proved that $\Pi^1_1$-conservation of any $\Pi^1_2$ problem over~$\BSig_2 + \neg \ISig_2$ is equivalent to a formalized version of the first-jump control.

Let~$\IDel_n$ be the following induction scheme for every~$\Sigma^0_n$ formula~$\phi$ and every~$\Pi^0_n$ formula~$\psi$:
$$
\forall x[\phi(x) \leftrightarrow \psi(x)] \wedge \phi(0) \wedge \forall x[\phi(x) \rightarrow \phi(x+1)] \rightarrow \forall x \phi(x)
$$
A set~$X$ is \emph{$M$-regular} (or \emph{piecewise-coded}, or even \emph{amenable}) if all its initial segments are $M$-finite.
Over this article, we will use the following characterizations of~$\BSig_2$:

\begin{proposition}[Chong and Mourad~\cite{chong1990degree} and Slaman~\cite{slaman2004bounding}]
The following are equivalent over~$\RCA_0$:
\begin{enumerate}
    \item[(1)] $\BSig_2$
    \item[(2)] The induction principle for $\Delta^0_2$ predicates ($\IDel_2$)
    \item[(3)] Every $\Delta^0_2$ set is regular
\end{enumerate}
\end{proposition}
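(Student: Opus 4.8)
The plan is to prove the cycle $(1)\Rightarrow(3)\Rightarrow(2)\Rightarrow(1)$, carrying out the first two implications by hand (they are short) and reducing the third to the cited theorems of Slaman and of Chong and Mourad, which I expect to be the only real obstacle.

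For $(1)\Rightarrow(3)$ I would argue as follows. Let $X$ be a $\Delta^0_2$ set and fix a bound $a$. Both $X$ and its complement are $\Sigma^0_2$, so write $x\in X\leftrightarrow\exists u\,\alpha(x,u)$ and $x\notin X\leftrightarrow\exists u\,\gamma(x,u)$ with $\alpha,\gamma\in\Pi^0_1$. Then $\forall x<a\,\exists u\,(\alpha(x,u)\vee\gamma(x,u))$, with a $\Pi^0_1$ matrix, so $\BSig_2$ produces a uniform bound $b$ with $\forall x<a\,\exists u<b\,(\alpha(x,u)\vee\gamma(x,u))$. On $[0,a)$ the predicate ``$x\in X$'' then becomes simultaneously $\Pi^0_1$ (namely $\exists u<b\,\alpha(x,u)$) and, using $\BSig_1$, $\Sigma^0_1$ (namely $\forall u<b\,\neg\gamma(x,u)$), with parameters $a,b$. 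Hence $X\uh a$ is definable by a $\Delta^0_1$ formula and is bounded, so it is $M$-finite in $\RCA_0$.

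For $(3)\Rightarrow(2)$: given $\phi\in\Sigma^0_2$ and $\psi\in\Pi^0_2$ provably equivalent, with $\phi(0)$ and $\forall x(\phi(x)\to\phi(x+1))$, the class $X=\{x:\phi(x)\}$ is $\Delta^0_2$. If $\neg\phi(b)$ for some $b$, then by regularity $X\uh(b+1)$ is coded by some $\sigma$, and one checks $0\in\sigma$, $b\notin\sigma$, and $\forall x<b\,(x\in\sigma\to x+1\in\sigma)$. Applying the $\Delta^0_0$-induction available in $\RCA_0$ to the formula ``$x\le b\to x\in\sigma$'' (which is $\Delta^0_0$ with parameters $b,\sigma$) yields $b\in\sigma$, a contradiction; hence $\forall x\,\phi(x)$.

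The main obstacle is $(2)\Rightarrow(1)$, which is precisely Slaman's theorem that $\IDel_{n+1}$ implies $\BSig_{n+1}$ (here $n=1$), building on the analysis of $\Sigma_n$-cuts of Chong and Mourad; I would simply invoke it. At the level of a sketch, it suffices to derive $\BPi_1$ (equivalent to $\BSig_2$ over $\RCA_0$): a failure of $\BPi_1$ gives a bounded interval $[0,a)$ carrying an \emph{unbounded} function $f(x)=\mu y.\,\pi(x,y)$ with $\pi\in\Pi^0_1$, whose graph is $\Delta^0_2$, and the delicate step is to turn this into a $\Delta^0_2$-definable proper cut of $M$ (downward- and successor-closed, neither empty nor everything), contradicting $\IDel_2$. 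The obstruction is exactly that the naive move ``$f$ takes at most $a$ values, hence is bounded'' is unavailable, since bounding the cardinality of a $\Delta^0_2$ class is itself a form of $\BSig_2$; circumventing this circularity is the technical core of Slaman's argument. Everything else — that $\BSig_2\leftrightarrow\BPi_1$, and that the relevant levels of the arithmetical hierarchy are closed under bounded quantification over $\RCA_0$ — is standard bookkeeping.
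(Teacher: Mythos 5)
The paper gives no proof of this proposition — it is simply quoted from Chong--Mourad and Slaman — so the only comparison available is with the literature. Your cycle $(1)\Rightarrow(3)\Rightarrow(2)\Rightarrow(1)$ is correct: the two implications you prove by hand check out (the $\BPi{}_1$-bounding of witnesses for $X$ and its complement, and the $\Delta^0_0$-induction on the code $\sigma$), and outsourcing $(2)\Rightarrow(1)$ to Slaman's theorem — the genuinely hard step, for exactly the reason you identify — is precisely what the paper's citation amounts to.
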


Given a countable model~$\M = (M, S) \models \RCA_0 + \BSig_2 + \WF(\alpha)$ for some~$\alpha \leq \epsilon_0$ topped by a set~$Y \in S$, and a $\mathsf{P}$-instance~$I \in S$ for some $\Pi^1_2$ problem~$\mathsf{P}$, we will create a decreasing sequence of conditions $c_0 \geq c_1 \geq \dots$ (for the appropriate notion of forcing depending on~$\mathsf{P}$) such that
\begin{enumerate}
    \item[(a)] The sequence is $\Delta^0_1(P)$ for some set~$P \subseteq M$ such that $\M[P] \models \IDel_1$
    \item[(b)] The set of~$s \in M$ such that $c_s$ is defined is a cut
    \item[(c)] For every~$k \in M$, there is some~$s \in M$ such that $c_s$ decides $(G \oplus Y)' \uh_k$
    \item[(d)] For every~$k \in M$, there is some~$s \in M$ such that $c_s$ forces $\Phi_e^{G \oplus Y}$ not to be an infinite $\alpha$-decreasing sequence for every~$e < k$
\end{enumerate}

The item (a) will ensure that there is a $\Delta^0_1(P)$ formula $\psi(s)$ stating that the construction can be pursued up to stage~$s$. By item (b), $\{ s : \psi(s) \}$ is a cut, so by $\IDel_1(P)$, the construction can be pursued at every stage. By item (c), $(G \oplus Y)'$ will be $\Delta^0_1(P)$, hence $\M[G] \models \BSig_2$. Last, by item (d), $\M[G] \models \WF(\alpha)$.

The use of an internal construction prevents from using a bijection between $M$ and $\omega$ to satisfy the requirements one by one. Therefore, one must use \emph{Shore blocking} arguments, that is, force the requirements simultaneously for larger and larger initial segments of~$M$, as mentioned in items~(c) and~(d).

\subsection{Well-foundedness and Shore blocking}

Before working on $\Pi^1_1$-conservation of~$\WKL$ and $\COH$, we prove two combinatorial lemma which say that Shore blocking comes for free for preserving $\WF(\alpha)$ for any ordinal $\alpha \leq \epsilon_0$. Instead of using the standard sums and products over ordinals, the natural (Hessenberg) sums and products will be more convenient:

\begin{definition}[Natural sum and product]
Let $\alpha$ and $\beta$ be two ordinals less than $\epsilon_0$. Let $\alpha = \omega^{\gamma_1}n_1 + \dots + \omega^{\gamma_k}n_k$ and $\beta = \omega^{\gamma_1}m_1 + \dots + \omega^{\gamma_k}m_k$ (We allow the $n_i$ and $m_i$ to be equal to $0$ in order to write $\alpha$ and $\beta$ using the same exponents $\gamma_i$). We define the natural sum of $\alpha$ and $\beta$ (denoted by $\alpha \nplus \beta$) to be the following ordinal (It does not depend on the choice of the family $(\gamma_i)$) : 

$$\omega^{\gamma_1}(n_1 + m_1) + \dots + \omega^{\gamma_k}(n_k + m_k)$$

We also define $\alpha \ntimes k$ to be equal to be the natural sum of $\alpha$ with itself $k$ times. 
\end{definition}

Note that $\alpha \ntimes k \leq \alpha \times \omega$, where $\alpha \times \omega = \sup_{k < \omega} \alpha \times k = \omega^{\gamma_1 + 1}n_1$ is the usual ordinal product.

\begin{proposition}
For all $\beta < \epsilon_0$, and $\alpha' < \alpha < \epsilon_0$ : $\alpha' \nplus \beta < \alpha \nplus \beta$ and $\beta \nplus \alpha' < \beta \nplus \alpha$.
\end{proposition}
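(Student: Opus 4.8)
The plan is to reduce both inequalities to a single statement. Since $\nplus$ is commutative by its very definition --- writing $\alpha$ and $\beta$ with a common sequence of exponents $\gamma_1 > \dots > \gamma_k$, the coefficient of $\omega^{\gamma_i}$ in $\alpha \nplus \beta$ is $n_i + m_i = m_i + n_i$, so $\alpha \nplus \beta = \beta \nplus \alpha$ --- the second inequality $\beta \nplus \alpha' < \beta \nplus \alpha$ is literally the first one $\alpha' \nplus \beta < \alpha \nplus \beta$ with the summands swapped. So it suffices to prove strict monotonicity of $\nplus$ in its left argument.

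First I would fix a strictly decreasing sequence of exponents $\gamma_1 > \dots > \gamma_k$ large enough that $\alpha$, $\alpha'$ and $\beta$ can all simultaneously be written in the form $\sum_{i \le k} \omega^{\gamma_i} c_i$ with the $c_i$ natural numbers (possibly $0$); this is exactly the flexibility the definition of $\nplus$ allows. Write $\alpha = \sum_i \omega^{\gamma_i} a_i$, $\alpha' = \sum_i \omega^{\gamma_i} a'_i$, $\beta = \sum_i \omega^{\gamma_i} b_i$, so that $\alpha \nplus \beta = \sum_i \omega^{\gamma_i}(a_i + b_i)$ and $\alpha' \nplus \beta = \sum_i \omega^{\gamma_i}(a'_i + b_i)$.

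Next I would invoke the standard characterization of the ordering on ordinals below $\epsilon_0$ written in this normal form: for a fixed decreasing exponent list $\gamma_1 > \dots > \gamma_k$, one has $\sum_i \omega^{\gamma_i} c_i < \sum_i \omega^{\gamma_i} d_i$ if and only if $(c_1, \dots, c_k)$ precedes $(d_1, \dots, d_k)$ lexicographically --- this follows, provably in weak arithmetic, from the bound $\omega^{\gamma_{i+1}} m_{i+1} + \dots + \omega^{\gamma_k} m_k < \omega^{\gamma_i}$. Applying it to $\alpha' < \alpha$, let $j \le k$ be least with $a'_j \ne a_j$; then $a'_i = a_i$ for $i < j$ and $a'_j < a_j$ as integers. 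Adding $b_i$ pointwise preserves this: $a'_i + b_i = a_i + b_i$ for $i < j$ and $a'_j + b_j < a_j + b_j$, so $(a'_i + b_i)_{i \le k}$ precedes $(a_i + b_i)_{i \le k}$ lexicographically, i.e. $\alpha' \nplus \beta < \alpha \nplus \beta$. Together with commutativity this settles both claims.

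There is no genuine obstacle here; the only points requiring a little care are the bookkeeping of choosing one common exponent list for all three ordinals and checking that the lexicographic description of ordinal comparison is available in the weak-arithmetic setting in which we work --- both routine and covered by the cited material of H\'ajek and Pudl\'ak. An alternative would be a direct induction on $k$ (or a transfinite induction on $\alpha$), but the lexicographic route is shorter and more transparent, and it is the one I would write up.
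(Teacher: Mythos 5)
Your proof is correct, and it is essentially the argument the paper has in mind: the paper simply asserts the claim is \qt{clear from the definition of $\nplus$}, and what you have written out --- commutativity, a common exponent list for all three ordinals, and the lexicographic comparison of coefficient vectors --- is exactly the routine verification being elided. No issues.
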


\begin{proof}
Clear from the definition of $\nplus$.
\end{proof}

\begin{lemma}\label[lemma]{lem:wf-product}
Let~$\M = (M,S) \models \RCA_0 + \WF(\alpha)$ for $\alpha$ an ordinal less than $\epsilon_0$. Then $\M \models \WF(\alpha \ntimes k)$ for every $k \in M$.
\end{lemma}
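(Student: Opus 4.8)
The plan is to prove the contrapositive: assuming $\M$ contains an infinite $(\alpha\ntimes k)$-decreasing sequence, I produce an infinite $\alpha$-decreasing sequence, contradicting $\WF(\alpha)$. Fix $k\in M$ and suppose $(\xi_i)_{i\in M}\in S$ satisfies $\alpha\ntimes k>\xi_0>\xi_1>\cdots$. The mechanism is Euclidean division by $\alpha$: writing $\xi_i=\alpha\cdot\mu_i+\rho_i$ with $\rho_i<\alpha$, the point is that the quotients $\mu_i$ form a non-increasing sequence of natural numbers, hence are eventually constant, and once they stabilize the remainders $\rho_i$ descend through $\alpha$.

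In detail: division of ordinal notations below $\epsilon_0$ is primitive recursive on indices and its basic properties are available in $\RCA_0$ (see H\'ajek and Pudl\'ak~\cite{hajek2017metamathematics}), so by $\Delta^0_1$-comprehension the sequence $(\mu_i,\rho_i)_{i\in M}$ belongs to $S$. From the Cantor normal forms $\alpha=\omega^{\gamma_1}n_1+\cdots+\omega^{\gamma_l}n_l$ and $\alpha\ntimes k=\omega^{\gamma_1}(kn_1)+\cdots+\omega^{\gamma_l}(kn_l)$, a short computation shows that $\xi_i<\alpha\ntimes k$ already forces $\mu_i\leq k$, so each $\mu_i$ is a (possibly nonstandard) element of $M$. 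Moreover $(\mu_i)_{i}$ is non-increasing: if $\mu_i<\mu_{i+1}$ then, using $\rho_i<\alpha$ and monotonicity of ordinal multiplication in the right factor, $\xi_i=\alpha\cdot\mu_i+\rho_i<\alpha\cdot(\mu_i+1)\leq\alpha\cdot\mu_{i+1}\leq\xi_{i+1}$, contradicting $\xi_i>\xi_{i+1}$. The set $\{\mu_i:i\in M\}$ is $\Delta^0_1$ in $(\xi_i)_i$ and bounded by $k$, hence $M$-coded, hence has a least element $\mu^\star$; let $i^\star$ be least with $\mu_{i^\star}=\mu^\star$. Since $(\mu_i)$ is non-increasing and everywhere $\geq\mu^\star$, we get $\mu_i=\mu^\star$ for all $i\geq i^\star$. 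Then for $i\geq i^\star$ we have $\xi_i=\alpha\cdot\mu^\star+\rho_i$, so $\xi_i>\xi_{i+1}$ gives $\rho_i>\rho_{i+1}$; hence $(\rho_{i^\star+n})_{n\in M}\in S$ is an infinite $\alpha$-decreasing sequence, contradicting $\WF(\alpha)$. As $k\in M$ was arbitrary, $\M\models\WF(\alpha\ntimes k)$ for every $k\in M$.

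The only delicate point is the ordinal bookkeeping inside a weak model: one must know that Euclidean division by $\alpha$ is total and well-behaved on notations below $\alpha\ntimes k$, and in particular that it yields a \emph{finite} quotient there (this is why the sharper bound $\mu_i\leq k$, rather than merely $\alpha\ntimes k\leq\alpha\times\omega$, is the convenient thing to extract), together with the elementary monotonicity and cancellation laws for ordinal $+$ and $\cdot$. These are all standard facts about the primitive recursive presentation of ordinals referenced in the excerpt and need no induction beyond $\ISig_1$; the combinatorial content of the lemma is exactly the observation that a non-increasing sequence of naturals stabilizes, i.e.\ the pigeonhole principle, which is already available in $\RCA_0$.
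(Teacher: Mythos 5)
Your proof is correct and is essentially the paper's argument unfolded: the paper simply observes $\alpha\ntimes k\leq\alpha\times\omega$ and cites the fact that $\RCA_0$ proves the product of two well-orders is well-founded, while your Euclidean-division argument (quotient bounded by $k$, non-increasing, hence stabilizes; then the remainders descend through $\alpha$) is exactly the proof of that product fact specialized to $\alpha\times(k+1)$. One cosmetic point: the set $\{\mu_i:i\in M\}$ is the range of a function, hence $\Sigma^0_1$ rather than $\Delta^0_1$, but it is bounded by $k$ and so $M$-coded by bounded $\Sigma^0_1$ comprehension (equivalently, apply $\LSig_1$ directly), so the least element exists as you use it.
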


\begin{proof}
Since $\alpha \ntimes k \leq \alpha \times \omega$ for every $k \in M$ and $\RCA_0$ proves that the product of two well-orders is a well-order, hence $\RCA_0 \vdash \WF(\alpha) \rightarrow \WF(\alpha \times \omega)$, we get the desired result.



\end{proof}

\begin{lemma}\label[lemma]{lem:shore-blocking-wf}
Consider a model $\M = (M,S)$ and $\alpha \in M$ an ordinal less than $\epsilon_0$.
For every~$k \in M$, there is a Turing functional $\Gamma_k$ such that, letting~$\beta \in \alpha$ be the largest ordinal less than $\alpha$ of index at most~$k$, for every~$X \in 2^M$ such that $\M[X] \models \RCA_0$, if there is some~$e < k$ such that $\Phi^X_e$ is an $M$-infinite decreasing sequence of elements smaller than~$\beta$, then $\Gamma^X_k$ is an $M$-infinite decreasing sequence of elements smaller than~$\beta \ntimes k$.

Moreover, an index of~$\Gamma_k$ can be found computably in~$k$.
\end{lemma}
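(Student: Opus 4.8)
The plan is to realise $\Gamma_k$ as a ``merging'' functional: on oracle $X$ it runs the $k$ candidates $\Phi_0^X,\dots,\Phi_{k-1}^X$ in parallel and, at each stage $t$, outputs the \emph{natural} sum $\sigma(t)=v_0(t)\nplus\dots\nplus v_{k-1}(t)$ of certain ``safe values'' $v_e(t)\le\beta$ attached to the tracks. The natural sum is used here rather than the ordinary ordinal sum precisely because it is strictly monotone in each argument (the Proposition above), so that a strict drop on whichever track is genuine is guaranteed to produce a strict drop of $\sigma$, regardless of that track's position among $0,\dots,k-1$; this is the combinatorial core of the Shore-blocking step for $\WF$.

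Concretely, I would first compute $\beta$ from $k$: enumerate the integers $\le k$, decode those that code ordinals, discard those coding an ordinal $\ge\alpha$, and take the maximum (or $0$ if the set is empty); all of this being effective from indices, an index of $\Gamma_k$ is then obtained computably in $k$ (using the fixed $\alpha$). If $\beta=0$ the statement is vacuous, so assume $\beta\ge 1$. Then, stage by stage, I would classify each track $e<k$ as \emph{pending} with value $v_e=\beta$ while $\Phi_e^X(0)$ has not yet converged; as \emph{active with pointer $p$} and value $v_e=\Phi_e^X(p)$ while $\Phi_e^X(0),\dots,\Phi_e^X(p)$ have converged and form a strictly decreasing sequence of ordinals $<\beta$; and as \emph{dead} with value $v_e=0$ from the first stage at which $\Phi_e^X$ reveals this to fail. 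The point of the ``$v_e=\beta$ while pending'' convention is that every $v_e(t)$ is then non-increasing in $t$ — the only transition that could look like an increase, pending $\to$ active, merely replaces $\beta$ by a value $<\beta$ — so $\sigma(t)$ is non-increasing by monotonicity of $\nplus$. Finally I define $\Gamma_k^X$: wait for the first stage $t_0$ at which some track is not pending and set $\Gamma_k^X(0)=\sigma(t_0)$; given $t_m$, search for the least $t>t_m$ with $\sigma(t)<\sigma(t_m)$ and set $\Gamma_k^X(m+1)=\sigma(t)$.

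It then remains to verify, reasoning inside $\M[X]$ (which models $\RCA_0$), that whenever some $\Phi_{e^*}^X$ with $e^*<k$ is an $M$-infinite strictly decreasing sequence of ordinals $<\beta$, the function $\Gamma_k^X$ is an $M$-infinite strictly decreasing sequence of ordinals $<\beta\ntimes k$. Strict decrease is immediate from the choice of the $t_m$. Boundedness: at $t_0$ one $v_e(t_0)<\beta$ while the other $k-1$ values are $\le\beta$, hence $\sigma(t_0)<\beta\nplus\beta\ntimes(k-1)=\beta\ntimes k$ by strict monotonicity of $\nplus$, and $\sigma$ only decreases afterwards. Totality is the only place where the hypothesis is used essentially: $\Phi_{e^*}^X$ never dies and is total, so its pointer grows without bound and $v_{e^*}$ strictly decreases cofinally often; writing $\sigma(t)=v_{e^*}(t)\nplus\mathrm{rest}(t)$ with $\mathrm{rest}$ non-increasing, each such drop strictly decreases $\sigma$, so the search defining $\Gamma_k^X(m+1)$ always terminates, and $\Sigma^0_1(X)$-induction yields that $\Gamma_k^X$ is defined on all of $M$.

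The step I expect to be the real obstacle — and the reason for the slightly unusual ``pending $=\beta$'' bookkeeping rather than simply leaving a track out of the sum until it starts — is the asynchrony of the candidates: the genuine sequence may begin producing values arbitrarily late, and the spurious ones may diverge forever without ever being exposed as dead, so one can neither wait for all of them nor add a track's contribution only when it first appears. Initialising a pending track at the maximal possible value $\beta$ turns every ``first output'' event into a harmless decrease, and the single unavoidable wait — for the first non-pending track — is exactly what pushes $\sigma(t_0)$ strictly below $\beta\ntimes k$.
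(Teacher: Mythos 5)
Your proposal is correct and follows essentially the same route as the paper: the paper's functional likewise assigns each track $e<k$ the default value $\beta$ until it produces output (your ``pending'' convention), takes the natural sum of the $k$ track values, and exploits strict monotonicity of $\nplus$ so that each new convergence beyond the previous stage forces a strict drop. The only cosmetic difference is that the paper ``twists'' the functionals so that a track violating strict $\beta$-descent simply diverges, where you instead declare it dead with value $0$ — equivalent bookkeeping.
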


\begin{proof}
By twisting the Turing functionals, we can assume that for every~$e, n \in M$, if $\Phi^\sigma_e(n)\downarrow$, then
\begin{itemize}
    \item[(1)] $n < |\sigma|$ ;
    \item[(2)] $\Phi^\sigma_e(m)\downarrow$ for every~$m < n$ ;
    \item[(3)] $\Phi^\sigma_e(0), \Phi^\sigma_e(1), \dots, \Phi^\sigma_e(n)$ is a strictly decreasing sequence of elements smaller than~$\beta$.
\end{itemize}

Given $\sigma \in 2^{<M}$ and $e < k$, let $\alpha(\sigma, e) = \Phi^{\sigma}_e(s)$ for the largest $s < |\sigma|$ such that $\Phi^{\sigma}_e(s)\downarrow$. If there is no such~$s$, then $\alpha(\sigma, e) = \beta$. Note that if $\sigma' \succeq \sigma$, then $\alpha(\sigma',e) \leq \alpha(\sigma,e)$.

Let~$\sigma_{-1} = \epsilon$.
Let~$\Gamma^X_k$ be the Turing functional which, on input~$n$, searches for some~$x > |\sigma_{n-1}|$ and some~$\sigma_n \prec X$ such that $\Phi_e^{\sigma_n}(x)\downarrow$ for some~$e < k$. If found, it outputs $\alpha(\sigma, 0) \nplus \dots \nplus \alpha(\sigma, k-1)$. Note that if~$\Gamma^X_k(n)\downarrow$, then by (3), $\Gamma^X(n)$ is an ordinal smaller than $\beta \ntimes k$.

Suppose that $X$ is such that $\M[X] \models \RCA_0$ and there is an $e < k$ is such that $\Phi_e^X$ is total.
Since~$\M[X] \models \ISig_1$, $\Gamma^X$ is total.

Moreover, since~$x > |\sigma_{n-1}|$, letting~$e < k$ be such that $\Phi_e^{\sigma_n}(x)\downarrow$, by (1) we have $\Phi_e^{\sigma_{n-1}}(x)\uparrow$, so by (2) and (3), $\alpha(\sigma_{n+1},e) < \alpha(\sigma_n, e)$, hence $\Gamma^X_k(n+1) < \Gamma^X_k(n)$. It follows that $\Gamma^X_k$ is an $M$-infinite decreasing sequence of ordinals smaller than~$\beta \ntimes k$.
\end{proof}

\subsection{$\Pi^1_1$-conservation of~$\COH$}

We now turn to the proof that $\RCA_0 + \BSig_2 + \WF(\alpha) + \COH$ is $\Pi^1_1$-conservative of $\RCA_0 + \BSig_2 + \WF(\alpha)$ for every ordinal~$\alpha \leq \epsilon_0$. In particular, letting~$\alpha = \omega$, we reprove the theorem of Chong, Slaman and Yang~\cite{chong2021pi11} stating that $\RCA_0 + \BSig_2 + \COH$ is $\Pi^1_1$-conservative over~$\RCA_0 + \BSig_2$.
Note that the original proof by Chong, Slaman and Yang uses a case analysis, depending on whether the model satisfies $\ISig_2$ or not. Belanger~\cite{Blanger2022ConservationTF} gave a more direct proof using characterization of p-cohesive sets are those whose jump is PA over~0' (see Jockusch and Stephan~\cite{jockusch1993cohesive}). Our proof follows the ideas from Belanger, but with an even more direct construction based on an instance-wise correspondence between instances of~$\COH$ and $\Pi^0_1(\emptyset')$ classes. As in Belanger's proof, the proof relies on the following theorem:

\begin{theorem}[Simpson–Smith~\cite{simpson1986factorization}]\label[theorem]{thm:wkl-rca0s}
Every countable model of~$\RCA_0^*$ can be $\omega$-extended into a model of~$\RCA_0^* + \WKL$.
\end{theorem}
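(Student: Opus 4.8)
The plan is to produce the $\omega$-extension by an $\omega$-length iteration in which each step adjoins a single path through a single infinite binary tree. Fix a countable $\M = (M,S) \models \RCA_0^*$, and note that for a structure of the form $\M[G]$ (first-order part $M$, second-order part all $\Delta^0_1(X \oplus G)$ sets with $X \in S$), the axioms $Q$, $\exp$, and the parameter-free instances of $\BSig_1$ concern only $M$, which never changes, while $\Delta^0_1$-comprehension holds by construction: any $\Sigma^0_1$ formula with parameters from the second-order part of $\M[G]$ is $\Sigma^0_1(W \oplus G)$ for a single $W \in S$, and if it is provably equivalent to a $\Pi^0_1$ formula then the set it defines is $\Delta^0_1(W \oplus G)$, hence already present. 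Consequently the only scheme whose preservation needs an argument is $\BSig_1$ with set parameters, and everything reduces to the following single-step claim: whenever $\Nc = (M,T) \models \RCA_0^*$ and $U \in T$ is an infinite binary tree, there is a path $X$ through $U$ with $\Nc[X] \models \BSig_1$. Granting this, enumerate externally a list that, over the finite stages, exhausts every code for an infinite binary tree occurring in any model built so far; apply the claim at stage $i$ to pass from $\M_i$ to $\M_{i+1} = \M_i[P_i]$; and set $S' = \bigcup_i S_i$. Then $(M,S') \models \RCA_0^*$ (each axiom instance mentions finitely many parameters, hence already holds in some $\M_i$, and its truth is absolute between $\M_i$ and $(M,S')$) and $(M,S') \models \WKL$ (every infinite binary tree of $S'$ lies in some $S_i$, so was handled).

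For the single-step claim I would force with the poset $\PP$ of infinite binary subtrees of $U$ lying in $T$, ordered by inclusion. Since $M$, hence $\PP$, is externally countable, I build from outside a descending sequence $U = U_0 \supseteq U_1 \supseteq \cdots$ meeting, for each $n$, a condition that fixes the $n$-th bit (one of the two halves of any infinite condition is again infinite), so the filter determines a genuine path $X \in 2^M$ through $U$; and meeting, for each $\Sigma^0_1$ formula $\varphi$, each $a \in M$, and each $Z \in T$, a condition deciding the corresponding $\BSig_1$ instance. After uniformization the instance reads ``$\forall x < a\ \Phi_e^{X \oplus Z}(x){\downarrow}$''. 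Given a condition $U_i$, form the tree $V = \{\sigma \in U_i : \exists x < a\ \Phi_e^{\sigma \oplus Z}(x) \text{ has not halted within } |\sigma| \text{ steps}\} \in T$. If $V$ is infinite, then since $V = \bigcup_{x < a} V_x$ for the trees $V_x = \{\sigma \in U_i : \Phi_e^{\sigma \oplus Z}(x) \text{ not halted within } |\sigma| \text{ steps}\}$, and $\BSig_1$ — applied to the $\Delta^0_1(U_i \oplus Z)$ function sending $x$ to the height of $V_x$ — forbids a union of $M$-finitely many finite trees from being infinite, some $V_x$ is infinite; taking $U_{i+1} = V_x$ forces $\Phi_e^{X \oplus Z}(x){\uparrow}$, so forces the hypothesis of the instance to fail. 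Otherwise $V$ is finite, witnessed by some $b \in M$, and then $U_{i+1} = U_i$ already forces $\forall x < a\ \Phi_e^{X \oplus Z}(x){\downarrow}$ within $b$ steps, i.e. the conclusion. For the generic $X$ every such instance is decided correctly, so $\Nc[X] \models \BSig_1$.

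The step that requires genuine care — and the reason this is a named theorem rather than a corollary of the low basis theorem — is exactly the preceding dichotomy. Because $\RCA_0^*$ contains neither $\ISig_1$ nor $\BSig_2$, one cannot choose $X$ with jump $\Delta^0_2$ in $\Nc$ and then invoke an induction principle to transfer $\BSig_1(X)$ down to a ground-model collection instance (that transfer would itself need $\BSig_2$); the preservation of $\BSig_1$ has to be read off directly from the tree combinatorics, using only $\BSig_1$, the totality of exponentiation (to code and count finite strings and computations), and the compactness of binary trees built into $\PP$. The remaining work — arranging the external bookkeeping so that it never appeals to induction, checking that each refinement $U_{i+1}$ genuinely belongs to $T$, and verifying the union model — is routine. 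This is the argument of Simpson and Smith~\cite{simpson1986factorization}.
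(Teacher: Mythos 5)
The paper does not prove this statement: it is imported as a black box from Simpson--Smith, so there is no internal argument to compare against. Your proof is a correct reconstruction of the standard one --- the reduction to a single path-adjunction step, forcing with infinite subtrees coded in the ground model, and, crucially, the dichotomy on the tree $V$ of strings on which some computation $\Phi_e^{\sigma \oplus Z}(x)$, $x < a$, has not yet converged, which is exactly where $\BSig_1$ of the ground model gets transferred to the extension. Two small points to tighten. First, the ``function sending $x$ to the height of $V_x$'' is partial with $\Sigma^0_1$ graph, so the correct application is of $\BSig_1$ to the $\Sigma^0_1(U_i \oplus Z)$ formula ``$\exists h\, (V_x \cap 2^h = \emptyset)$'': this yields a single level missed by every $V_x$, hence by $V$, giving the contrapositive you want (if $V$ is $M$-infinite, some $V_x$ is). Second, you should maintain the invariant that each condition is concentrated above the current stem; then downward closure of $V_x$ guarantees that the infinite $V_x$ chosen in the first horn of the dichotomy really contains arbitrarily long extensions of the stem, so it is a legitimate refinement.
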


\begin{proposition}\label[proposition]{prop:coh-wf}
Consider a countable model $\M = (M,S) \models \RCA_0 + \BSig_2 + \WF(\alpha)$ topped by a set $Y \in S$, where $\alpha \leq \epsilon_0$ is an ordinal. Then for every countable sequence of sets $\vec{R} \in S$ and every~$P \gg Y'$ such that $\M[P] \models \RCA_0^*$, there exists $G \subseteq M$ such that 
\begin{enumerate}
    \item $G$ is $\vec{R}$-cohesive ;
    \item $P \geq_T (G \oplus Y)'$ ;
    \item $\M[G] \models \RCA_0 + \BSig_2 + \WF(\alpha)$.
\end{enumerate}
\end{proposition}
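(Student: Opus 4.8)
The plan is to produce the cohesive set $G$ by a formalized internal construction, following the pattern laid out in Section~2.1 with the four items (a)–(d). The key simplification, as announced, is that a $\vec R$-cohesive set can be obtained as a path through a $\Pi^0_1(Y')$ class: the standard construction builds a decreasing sequence of ``reservoirs'' $M = R^\sigma_{\langle\rangle} \supseteq R^\sigma_{\sigma\restriction 1} \supseteq \dots$, where at level $s$ one keeps either $R_s$ or $\overline R_s$ intersected with the current reservoir, always choosing the infinite side (a $\Pi^0_1(Y')$ condition). A binary string $\sigma$ is on the tree iff all the reservoirs it determines are infinite; this is a $\Delta^0_1(Y')$ tree, $Y'$-infinite, and any path $X$ through it determines a $\vec R$-cohesive set $G = \{ x : x \in R^X_{X\restriction x}\}$. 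Since $P \gg Y'$ and $\M[P] \models \RCA_0^*$, Theorem~\ref{thm:wkl-rca0s} applied inside $\M[P]$ gives an $\omega$-extension satisfying $\RCA_0^* + \WKL$, in which such a path $X$ exists with $X \le_T P$ — actually I will want $X$ together with its jump controlled: because the tree is $\Delta^0_1(Y')$ and $P \gg Y'$, we get $X$ with $(X \oplus Y)' \le_T P$, which is item~(2).

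The substance is then checking item (3): that $\M[G] \models \RCA_0 + \BSig_2 + \WF(\alpha)$. For $\BSig_2$, use that $(G \oplus Y)' \le_T P$ and $\M[P] \models \RCA_0^*$ (so $\M[P] \models \BSig_1$); since $\M \models \BSig_2$, i.e.\ $\M \models \IDel_2$, and $(G\oplus Y)'$ is $\Delta^0_1(P)$-coded, the regularity/$\IDel_2$ characterization of $\BSig_2$ transfers to $\M[G]$. For $\ISig_1$ inside $\M[G]$ — needed to even have $\M[G] \models \RCA_0$ — one runs the construction as a $\Delta^0_1(P)$ process: the set of stages $s$ at which the construction is defined is a cut (item (b)), hence all of $M$ by $\IDel_1(P)$ (item (a)); here the point is that with $\M \models \ISig_1$ and $G$ of bounded complexity over $P$, $\M \cup \{G\} \models \ISig_1$, so $\M[G] \models \RCA_0$. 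The genuinely new ingredient is $\WF(\alpha)$: I need item~(d), that no $\Phi^{G\oplus Y}_e$ is an $M$-infinite $\alpha$-decreasing sequence. This is where \Cref{lem:shore-blocking-wf} does the work: given $k \in M$ and $\beta$ the largest ordinal of index $\le k$ below $\alpha$, if some $\Phi^{G\oplus Y}_e$ ($e<k$) were an infinite $\beta$-decreasing sequence, the functional $\Gamma_k$ would yield an $M$-infinite decreasing sequence below $\beta \ntimes k$; but $\beta \ntimes k < \alpha \ntimes \omega$, and \Cref{lem:wf-product} gives $\M \models \WF(\alpha \ntimes k') $ for all $k' \in M$ while $\WF(\alpha)$ propagates up to $\WF(\alpha \times \omega)$ — so $\WF(\beta\ntimes k)$ holds, a contradiction. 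Since $\Gamma_k$ is computed uniformly in $k$ and the construction must decide each $\Phi^{G\oplus Y}_e \restriction$ on an initial segment of $M$ growing with $k$ (Shore blocking), this rules out all $e$ simultaneously and gives $\M[G] \models \WF(\alpha)$.

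I expect the main obstacle to be the bookkeeping that fuses the three requirement types into a single $\Delta^0_1(P)$ construction: one must interleave (i) extending the cohesive path, (ii) deciding $(G\oplus Y)'\restriction k$, and (iii) invoking $\Gamma_k$ to kill the $\alpha$-decreasing sequences, all while keeping the whole sequence of conditions $\Delta^0_1(P)$ and keeping the set of defined stages a cut. The delicate verification is that forcing along the $\Pi^0_1(Y')$ tree inside the $\WKL$-extension really does let each jump-bit get decided — i.e.\ that the $\WKL$-generic path has $(X\oplus Y)'$ computed by $P$ — and that this does not conflict with the cohesiveness reservoirs shrinking. Once the construction is set up so that (a)–(d) hold, items (1)–(3) follow as above, and the proposition is proved.
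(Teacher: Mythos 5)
Your overall architecture matches the paper's: compute a path $Q$ through the $\Pi^0_1(Y')$ class of reservoir choices from $P \gg Y'$, then build $G$ by a $\Delta^0_1(P)$ Mathias-style construction guided by $Q$, with Shore-blocked requirements for deciding $(G\oplus Y)'$ and for preserving $\WF(\alpha)$, and with $\IDel_1(P)$ pushing the construction through all of $M$. (Two small slips along the way: the cohesive set is not the ``diagonal'' set $\{x : x\in R^X_{X\uh x}\}$ of the path, which need not be $M$-infinite — it is the union of the Mathias stems — and it is $(G\oplus Y)'$, not the jump of the path, that $P$ must compute; your later description of the internal construction suggests you intend the right thing.)

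The genuine gap is in the $\WF(\alpha)$-preservation step. You argue: if some $\Phi_e^{G\oplus Y}$ ($e<k$) were an $M$-infinite $\alpha$-decreasing sequence, then $\Gamma_k^{G\oplus Y}$ would be an $M$-infinite decreasing sequence below $\beta\ntimes k$, contradicting $\WF(\beta\ntimes k)$. But $\Gamma_k^{G\oplus Y}$ is a set of $\M[G]$, not of $\M$; the fact that $\M\models\WF(\alpha\ntimes k)$ (\Cref{lem:wf-product}) says nothing about it, and $\M[G]\models\WF(\beta\ntimes k)$ is precisely what is being proved, so invoking it is circular. What is needed — and what the proposal never supplies — is a density lemma: every condition has an extension \emph{forcing} $\Phi_e^{G\oplus Y}$ not to be an $\alpha$-decreasing sequence of length more than some $n$, for every $e<k$ simultaneously. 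The proof of that density is the only place the ground model's well-foundedness actually enters: assuming no such extension exists, one builds $Y$-recursively \emph{inside $\M$} an increasing sequence of compatible stems $\tau_0\preceq\tau_1\preceq\cdots$ with $\Phi_{e}^{\tau_n\oplus Y}(n)\downarrow$ for some $e<k$ depending on $n$, thins it to a single $e$ via $\RT^1_k$ (available from $\BSig_2$), and thereby obtains an $M$-infinite $\alpha$-decreasing sequence lying in $\M$ itself, contradicting $\M\models\WF(\alpha)$. This is exactly \Cref{lem:coh-wf-forcing-wf}, which for $\COH$ does not even need $\Gamma_k$; one could instead force $\Gamma_k^{G\oplus Y}$ to be partial, but the same density argument is still required. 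Without it, the construction has no reason ever to satisfy the $\S_k$-requirements, and item (3) does not follow.
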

\begin{proof}
Given $\vec{R} = R_0, R_1, \dots$ and $\sigma \in 2^{<M}$, let $\vec{R}_\sigma = \bigcap_{\sigma(i) = 0} \overline{R}_i \bigcap_{\sigma(i) = 1} R_i$. Let $T$ be the $\Sigma^0_1(Y)$ tree $\{ \sigma \in 2^{<M} : \exists x (x > |\sigma| \wedge x \in \vec{R}_\sigma) \}$. Note that a node $\sigma$ is extendible in $T$ iff $\vec{R}_\sigma$ is infinite. The tree 
$T$ is $\Delta^0_1(Y')$, so since~$P \gg Y'$, there is a $\Delta^0_1(P)$ set~$Q \in [T]$. 

We will define the set~$G$ using an infinite $\Delta^0_1(P)$ decreasing sequence of Mathias-like conditions. 

\begin{definition}
A \emph{condition} is a pair of $M$-finite binary strings $(\sigma,\rho)$ such that $\rho \prec Q$.
We have $(\sigma_2,\rho_2) \leq (\sigma_1,\rho_1)$ if $\sigma_2 \succ \sigma_1$, $\rho_2 \succ \rho_1$ and $\sigma_2 - \sigma_1 \subseteq \vec{R}_{\rho_1}$ (where $\sigma_2 - \sigma_1$ is the difference between the sets corresponding to the two sequences).
\end{definition}

One can think of a condition $(\sigma, \rho)$ as a Mathias condition $(\sigma, \vec{R}_\rho)$. Requiring that $\rho \prec Q$ ensures that $\vec{R}_\rho$ is $M$-infinite. When increasing the length of~$\rho$, we make progress in cohesiveness.
Note that being a condition is a $\Delta^0_1(Q)$ predicate, as it simply requires checking that $\sigma \prec Q$, while the extension relation is $\Delta^0_1(Y)$. In particular, both are $\Delta^0_1(Q \oplus Y)$, so $\Delta^0_1(P)$.

A sequence $\tau$ is said to be \emph{compatible} with a condition $(\sigma,\rho)$ if $\sigma \prec \tau$ and  $\tau - \sigma \subseteq \vec{R}_{\rho}$.

\begin{definition}
Let~$(\sigma, \rho)$ be a condition, and $\Phi_e$ be a Turing functional.
\begin{enumerate}
    \item[1.] $(\sigma, \rho) \Vdash \exists x \Phi^{G \oplus Y}_e(x)\downarrow$ if there is some~$x, t < |\sigma|$ such that $\Phi^{\sigma \oplus Y}_e(x)[t]\downarrow$
    \item[2.] $(\sigma, \rho) \Vdash \forall x \Phi^{G \oplus Y}_e(x)\uparrow$ if for every~$\tau$ compatible with~$(\sigma, \rho)$ and every~$x \in M$, $\Phi^{\tau \oplus Y}_e(x)\uparrow$.
\end{enumerate}
\end{definition}

Note that the relations $(\sigma, \rho) \Vdash \exists x \Phi^{G \oplus Y}_e(x)\downarrow$
and $(\sigma, \rho) \Vdash \forall x \Phi^{G \oplus Y}_e(x)\uparrow$ are $\Delta^0_0(Y)$ and $\Pi^0_1(Y)$, respectively. Also note that since the formulas $\exists x \Phi^{G \oplus Y}_e(x)\downarrow$ and $\forall x \Phi^{G \oplus Y}_e(x)\uparrow$ are universally $\Sigma^0_1(G \oplus Y)$ and $\Pi^0_1(G \oplus Y)$, respectively, the definition above induces a forcing relation for these classes of formulas.

To ensure that $P \geq_T (G \oplus Y)'$, we will also construct $(G \oplus Y)'$ in parallel. \\

For this, we will need to satisfy three kind of requirements for every~$k \in M$:
\begin{itemize}
    \item $\mathcal{R}_k$: $(G \oplus Y)' \uh_k$ is decided
    \item $\mathcal{S}_k$: for every~$e < k$, $\Phi^G_e$ is not an $M$-infinite $\alpha$-decreasing sequence
    \item $\mathcal{T}_k$: for every~$e < k$, $G \subseteq^* R_e$ or $G \subseteq^* \overline{R}_e$
\end{itemize}

We will need the following two technical lemmas, which are Shore blocking arguments for satisfying the two kind of requirements, namely, deciding the jump and preserving $\WF(\alpha)$. In what follows, $(\tau, \rho) \Vdash (G \oplus Y)' \uh_k = \sigma'$ means that for every~$e < |\sigma'|$, if $\sigma'(e) = 1$ then $(\tau, \rho) \Vdash \Phi^{G \oplus Y}_e(e)\downarrow$ and if $\sigma'(e) = 0$, then $(\tau, \rho) \Vdash \Phi^{G \oplus Y}_e(e)\uparrow$. Thus, this relation is $\Pi^0_1(Y)$.

\begin{lemma}\label[lemma]{lem:coh-wf-forcing-jump}
Let $(\sigma, \rho)$ be a condition. For every~$k \in M$, there exists some $M$-finite $\sigma' \in 2^k$ and an extension $(\tau, \rho) \leq (\sigma, \rho)$ such that $(\tau, \rho) \Vdash (G \oplus Y)' \uh_k = \sigma'$.
\end{lemma}
\begin{proof}
Let~$W$ be the set of all $\sigma' \in 2^k$ such that $(\exists \tau \succ \sigma)(\exists t)(\forall e < k)(\tau - \sigma \in \vec{R}_\rho \wedge |\tau - \sigma| \geq 1 \wedge \sigma'(e) = 1 \to \Phi_e^{\sigma \oplus Y}(e)[t] \downarrow)$.
Note that $W$ is $\Sigma^0_1(Y)$, hence is $M$-finite. Moreover, $W$ is non-empty, as it contains the string $000\dots$. Let~$\sigma' \in W$ be the lexicographically maximal element, and let~$\tau \succeq \sigma$ witness that $\sigma' \in W$.

We claim that $(\tau, \rho)$ forces $(G \oplus Y)' \uh_k = \sigma'$.
Fix some $e < k$.

Case (a): $\sigma'(e) = 1$. Then $\Phi_e^{\tau \oplus Y}(e)\downarrow$, hence for every~$G$ compatible with $(\tau, \rho)$, $e \in (G \oplus Y)'$.

Case (b): $\sigma'(e) = 0$. The maximality of $\sigma'$ ensures that for every~$\mu$ compatible with $(\tau, \rho)$, $\Phi^{\mu \oplus Y}(e)\uparrow$. It follows that $\Phi^{G \oplus Y}_e(e)\uparrow$, hence $e \not \in (G \oplus Y)'$.
\end{proof}

\begin{lemma}\label[lemma]{lem:coh-wf-forcing-wf}
Let $(\sigma, \rho)$ be a condition. For every~$k \in M$, there exists some~$n \in M$ an extension $(\tau, \rho) \leq (\sigma, \rho)$ forcing~$\Phi_e^{G \oplus Y}$ not to be an $\alpha$-decreasing sequence of length more than~$n$ for every~$e < k$.
\end{lemma}
\begin{proof}
By twisting the Turing functionals, we can assume that for every~$e, n \in M$, if $\Phi^\sigma_e(n)\downarrow$, then
\begin{itemize}
    \item $\Phi^\sigma_e(m)\downarrow$ for every~$m < n$
    \item $\Phi^\sigma_e(0), \Phi^\sigma_e(1), \dots, \Phi^\sigma_e(m)$ is a strictly $\alpha$-decreasing sequence
\end{itemize}

Suppose the lemma does not hold. Then in particular, for every~$M$-finite string~$\tau$ compatible with $(\sigma, \rho)$ and every~$n \in M$, there is some~$\tau_1 \succeq \tau$ compatible with~$(\sigma, \rho)$ and some~$e < k$ such that $\Phi^{\tau_1 \oplus Y}(n)\downarrow$.

Build $Y$-recursively an infinite sequence $\tau_0 \preceq \tau_1 \preceq \tau_2 \preceq \dots$ such that for every~$n$,
\begin{itemize}
    \item $\tau_n$ is compatible with $(\sigma, \rho)$
    \item $\Phi^{\tau_n \oplus Y}_e(n)\downarrow$ for some~$e < k$ (which depends on~$n$)
\end{itemize}
Since~$\M \models \BSig_2$, $\M \models \forall k\RT^1_k$,
so there is an $M$-infinite set~$Z \in \M$ and some~$e < k$ such that for every~$n \in Z$, $\Phi^{\tau_n \oplus Y}_e(n)\downarrow$.

Then, letting $f(n) = \Phi^{\tau_n \oplus Y}_e(n)$, we have an $M$-infinite $\alpha$-decreasing sequence, contradicting $\M \models \WF(\alpha)$.
\end{proof}

Note that the statement "$(\tau, \rho)$ forces $\Phi_e^{G \oplus Y}$ not to be an $\alpha$-decreasing sequence of length more than~$n$ for every~$e < k$"
is $\Pi^0_1(Y)$. Indeed, it means that for every~$\mu$ compatible with $(\tau, \rho)$ and every~$e < k$, $\Phi_e^{\mu \oplus Y}(n)\uparrow$."
\bigskip

\textbf{Construction}.
We will build a decreasing sequence $(\sigma_s, \rho_s)$ of conditions and then take for $G$ the union of the $\sigma_s$. We will also build an increasing sequence $(\sigma'_s)$ such that $(G \oplus Y)'$ will be the union of the $\sigma'_s$. Initially, we take $\sigma_0 = \sigma'_0 = \rho_0 = \epsilon$, and during the construction, we will ensure that we have $|\sigma_s|, |\sigma'_s|, |\rho_s| \leq s$ at every stage. Each stage will be either of type $\R$, of type $\S$ or of type~$\T$. The stage $0$ is of type $\R$.

Assume that $(\sigma_{s}, \rho_{s})$ and $\sigma'_{s}$ are already defined. Let~$s_0 < s$ be the latest stage at which we switched the stage type. We have three cases.

Case 1: $s$ is of type~$\R$. If there exists some~$\tau \in 2^{\leq s}$ and some~$\sigma' \in 2^{s_0}$ such that $(\tau, \rho_s) \leq (\sigma_s, \rho_s)$, and 
$(\tau, \rho_s)$ forces $(G \oplus Y)' \uh_{s_0} = \sigma'$, then let~$\sigma_{s+1} = \tau$, $\rho_{s+1} = \rho_s$, $\sigma'_{s+1} = \sigma'$ and let~$s+1$ be of type~$\S$. Otherwise, the elements are left unchanged and we go to the next stage.

Case 2: $s$ is of type~$\S$. If there exists some~$\tau \in 2^{\leq s}$ such that $(\tau, \rho_s) \leq (\sigma_s, \rho_s)$, and $(\tau, \rho_s)$ forces $\S_{s_0}$, then let~$\sigma_{s+1} = \tau$, $\rho_{s+1} = \rho_s$, $\sigma'_{s+1} = \sigma'_s$ and let~$s+1$ be of type~$\T$. Otherwise, the elements are left unchanged and we go to the next stage.

Case 3: $s$ is of type~$\T$. If there exists some $\tau \in 2^{\leq s}$ such that $(\tau, \rho_s) \leq (\sigma_s, \rho_s)$ and $\tau - \sigma_s \neq \emptyset$. Then let~$\sigma_{s+1} = \tau$, $\rho_{s+1} = P \uh_{s_0}$, $\sigma'_{s+1} = \sigma'_s$, and let~$s+1$ be of type~$\R$. Otherwise, the elements are left unchanged and we go to the next stage.

This completes the construction.
\bigskip

\textbf{Verification}.
Since the size of $\sigma_s, \sigma'_s$ and $\rho_s$ are bounded by $s$, there is a $\Delta_1^0(Y' \oplus Q)$-formula $\phi(s)$ stating that the construction can be pursued up to stage $s$. Our construction implies that the set $\{s|\phi(s)\}$ is a cut, so by $\IDel_1(P)$, the construction can be pursued at every stage. 

Let~$G = \bigcup_{s \in M} \sigma_s$.
By \Cref{lem:coh-wf-forcing-jump} and \Cref{lem:coh-wf-forcing-wf} and the fact that each $\vec{R}_{\rho_s}$ is $M$-infinite,
each type of stage changes $M$-infinitely often. Thus, $G$ is $M$-infinite and $\{|\sigma'_s| : s \in M \}$ is $M$-infinite, so $P \geq_T (G \oplus Y)'$. In particular, $\M[G] \models \RCA_0 + \BSig_2$. Moreover, for every~$e \in M$, there is a stage~$s_0 > e$ at which the stage type switches from $\R$ to $\S$, and a stage~$s > s_0$ at which the stage switches from~$\S$ to~$\T$.
Then, $(\sigma_{s+1}, \rho_{s+1})$ forces $\S_{s_0}$, so $\Phi^{G \oplus Y}_e$ is not an $M$-infinite $\alpha$-decreasing sequence. Thus, $\M[G] \models \WF(\alpha)$.
Similarly, for every~$e \in M$, there is a stage~$s > e$ such that the stage type switches from~$\S$ to~$\T$. Then $(\sigma_{s+1}, \rho_{s+1})$ is such that $|\rho_{s+1}| > e$, so $G \subseteq^{*} R_e$ or $G \subseteq^{*} \overline{R}_e$. Thus $G$ is $\vec{R}$-cohesive. This completes the proof of \Cref{prop:coh-wf}.
\end{proof}

\begin{corollary}\label[corollary]{cor:coh-wf}
Consider a countable model $\M = (M,S) \models \RCA_0 + \BSig_2 + \WF(\alpha)$ topped by a set $Y \in S$, where $\alpha \leq \epsilon_0$ is an ordinal. Then for every $\vec{R} \in S$, there exists $G \subseteq M$ such that 
\begin{enumerate}
    \item $G$ is $\vec{R}$-cohesive ;
    \item $\M[G] \models \RCA_0 + \BSig_2 + \WF(\alpha)$.
\end{enumerate}
\end{corollary}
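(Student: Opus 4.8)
The plan is to derive the corollary directly from \Cref{prop:coh-wf} together with the Simpson–Smith theorem (\Cref{thm:wkl-rca0s}); since \Cref{prop:coh-wf} already hands us the cohesive set with the right properties, the only work is to manufacture an oracle $P \gg Y'$ with $\M[P] \models \RCA_0^*$.

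First I would pass to the structure $\M' = \M[Y']$, whose first-order part is $M$ and whose second-order part is $\{\Delta^0_1(X \oplus Y') : X \in S\}$; being indexed by the countable set $S$, $\M'$ is again countable. I claim $\M' \models \RCA_0^*$. Indeed, $\Delta^0_1$-comprehension holds by construction, exponentiation is total since $\M \models \RCA_0$, and $\BSig_1$ holds in $\M'$: as $\M$ is topped by $Y$, every $\Sigma^0_1(Y')$ formula with a parameter drawn from the second-order part of $\M'$ can, after absorbing the parameter into $Y$, be taken to have parameter $Y'$ alone, and such a formula is $\Sigma^0_2(Y)$; thus an instance of $\BSig_1$ in $\M'$ is an instance of $\BSig_2$ in $\M$, which holds by hypothesis. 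Note that $\M'$ need not satisfy $\ISig_1$ — that would require $\ISig_2$ in $\M$ — which is exactly why the detour through $\RCA_0^*$ rather than $\RCA_0$ is needed.

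Next I would apply \Cref{thm:wkl-rca0s} to $\M'$ to obtain an $\omega$-extension $\M^* = (M, S^*) \models \RCA_0^* + \WKL$. Since $Y' \in S^*$ and $\Delta^0_1(Y')$-comprehension is available in $\M^*$, the latter contains the standard $\Delta^0_1(Y')$ infinite binary tree all of whose paths are of PA degree over $Y'$; by $\WKL$ it has a path $P \in S^*$, so $P \gg Y'$. Moreover $\M[P]$ is an $\omega$-submodel of $\M^*$: its second-order part $\{\Delta^0_1(X \oplus P) : X \in S\}$ is contained in $S^*$ because $S \subseteq S^*$, $P \in S^*$, and $S^*$ is closed under $\Delta^0_1$-definability. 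Consequently $\M[P] \models \RCA_0^*$, so $P$ meets both hypotheses of \Cref{prop:coh-wf}.

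Finally, applying \Cref{prop:coh-wf} to $\vec{R} \in S$ and this $P$ yields a set $G \subseteq M$ that is $\vec{R}$-cohesive with $\M[G] \models \RCA_0 + \BSig_2 + \WF(\alpha)$; discarding the auxiliary clause $P \geq_T (G \oplus Y)'$ gives the corollary. There is no real obstacle here beyond \Cref{prop:coh-wf} itself; the only points needing a little care are the verification that $\M[Y'] \models \RCA_0^*$ — where the topping of $\M$ by $Y$ converts $\BSig_1(Y')$ into $\BSig_2$ — and the observation that cutting $\M^*$ back down to $\M[P]$ preserves $\RCA_0^*$.
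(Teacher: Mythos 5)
Your proof is correct and follows the same route as the paper: the paper's own proof of this corollary is exactly "by \Cref{thm:wkl-rca0s} there is $P \gg Y'$ with $\M[P] \models \RCA_0^*$; conclude by \Cref{prop:coh-wf}." You have simply expanded the appeal to Simpson–Smith with the (correct) details — that $\M[Y'] \models \RCA_0^*$ because $\BSig_1$ relative to $Y'$ is an instance of $\BSig_2$ in the topped model $\M$, and that a path through the universal PA-over-$Y'$ tree in the $\WKL$-extension yields the required $P$.
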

\begin{proof}
By \Cref{thm:wkl-rca0s}, there exists a set~$P \subseteq M$ such that $P \gg Y'$ and $\M[P] \models \RCA_0^*$.
The conclusion follows from~\Cref{prop:coh-wf}.
\end{proof}





We therefore obtain our first conservation theorem:

\begin{theorem}\label[theorem]{thm:coh-pi11-wfepsilon0}
Fix an ordinal~$\alpha \leq \epsilon_0$.
$\RCA_0 + \COH + \BSig_2 + \WF(\alpha)$ is $\Pi_1^1$-conservative over $\RCA_0 + \BSig_2 + \WF(\alpha)$.
\end{theorem}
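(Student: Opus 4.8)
The plan is to deduce \Cref{thm:coh-pi11-wfepsilon0} from \Cref{theorem:problem-wf-cons} by verifying its single hypothesis for the $\Pi^1_2$ problem $\mathsf{P} = \COH$. Recall that a $\COH$-instance is an infinite sequence of sets $\vec{R} = R_0, R_1, \dots$, and a $\COH$-solution is an $\vec{R}$-cohesive set. So it suffices to show: for every countable topped model $\M = (M,S) \models \RCA_0 + \BSig_2 + \WF(\alpha)$ and every $\vec{R} \in S$, there is an $\vec{R}$-cohesive set $G \subseteq M$ with $\M[G] \models \RCA_0 + \BSig_2 + \WF(\alpha)$. But this is exactly the content of \Cref{cor:coh-wf}, which was obtained by first invoking \Cref{thm:wkl-rca0s} to get a set $P \subseteq M$ with $P \gg Y'$ and $\M[P] \models \RCA_0^*$ (where $Y$ tops $\M$), and then applying \Cref{prop:coh-wf} to produce the desired $G$.

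Concretely, first I would recall that $\COH$ is a $\Pi^1_2$ problem, as already noted in the excerpt. Then I would fix a countable topped model $\M = (M,S) \models \RCA_0 + \BSig_2 + \WF(\alpha)$ and an arbitrary $\COH$-instance $\vec{R} \in S$. Applying \Cref{cor:coh-wf} to $\vec{R}$ yields a set $G \subseteq M$ that is $\vec{R}$-cohesive and satisfies $\M[G] \models \RCA_0 + \BSig_2 + \WF(\alpha)$. This is precisely the hypothesis required by \Cref{theorem:problem-wf-cons} with $\mathsf{P} = \COH$, so that theorem gives that $\RCA_0 + \BSig_2 + \WF(\alpha) + \COH$ is $\Pi^1_1$-conservative over $\RCA_0 + \BSig_2 + \WF(\alpha)$.

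There is essentially no obstacle remaining at this stage: all the real work — the internal forcing construction of the cohesive set with the Shore-blocking arguments for deciding the jump (\Cref{lem:coh-wf-forcing-jump}) and for preserving $\WF(\alpha)$ (\Cref{lem:coh-wf-forcing-wf}) — has already been carried out in the proof of \Cref{prop:coh-wf}, and the passage from a single instance-wise construction to full $\Pi^1_1$-conservation is packaged once and for all in \Cref{theorem:problem-wf-cons}. Hence the proof is a two-line citation. The only point worth a sentence of care is that \Cref{cor:coh-wf} is stated for a model \emph{topped} by some $Y \in S$, which matches the hypothesis of \Cref{theorem:problem-wf-cons}; so no additional topping argument is needed.

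\begin{proof}
The statement $\COH$ is a $\Pi^1_2$ problem: a $\COH$-instance is an infinite sequence of sets $\vec{R}$, and a $\COH$-solution is an $\vec{R}$-cohesive set. Let $\M = (M,S) \models \RCA_0 + \BSig_2 + \WF(\alpha)$ be a countable topped model, and let $\vec{R} \in S$ be a $\COH$-instance. By \Cref{cor:coh-wf}, there exists $G \subseteq M$ which is $\vec{R}$-cohesive and such that $\M[G] \models \RCA_0 + \BSig_2 + \WF(\alpha)$. Thus $\COH$ satisfies the hypothesis of \Cref{theorem:problem-wf-cons}, and we conclude that $\RCA_0 + \COH + \BSig_2 + \WF(\alpha)$ is $\Pi^1_1$-conservative over $\RCA_0 + \BSig_2 + \WF(\alpha)$.
\end{proof}
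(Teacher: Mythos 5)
Your proof is correct and follows the same route as the paper, which simply cites \Cref{prop:coh-wf} together with \Cref{theorem:problem-wf-cons}; routing through \Cref{cor:coh-wf} is just the packaged form of the same argument (and is if anything slightly more careful, since the corollary already supplies the set $P \gg Y'$ via \Cref{thm:wkl-rca0s}).
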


\begin{proof}
Immediate by \Cref{prop:coh-wf} and \Cref{theorem:problem-wf-cons}.
\end{proof}

\subsection{$\Pi^1_1$-conservation of $\WKL$}

We now turn to the proof of $\Pi^1_1$-conservation of~$\WKL_0$. As for~$\COH$, in the case~$\alpha = \omega$, we recover the well-known conservation theorem of H\'ajek~\cite{hajek1993interpretability} stating that $\RCA_0 + \BSig_2 + \WKL$ is a $\Pi^1_1$-conservative extension of~$\RCA_0 + \BSig_2$.

\begin{proposition}\label[proposition]{prop:wkl-wf}
Consider a countable model $\M = (M,S) \models \RCA_0 + \BSig_2 + \WF(\alpha)$ topped by a set $Y \in S$, where $\alpha \in M$ is an ordinal $\leq \epsilon_0$. Then for every $M$-infinite primitive $Y$-recursive tree $T \in S$, there exists $G \subseteq M$ such that 
\begin{enumerate}
    \item $G$ is a path in $T$ ;
    \item $Y' \geq_T (G \oplus Y)'$ ;
    \item $\M[G] \models \RCA_0 + \BSig_2 + \WF(\alpha)$
\end{enumerate}
\end{proposition}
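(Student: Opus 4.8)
The plan is to run a low forcing argument over~$\M$ that builds a path~$G$ through~$T$ by an internal construction parallel to the one used for~$\COH$ in \Cref{prop:coh-wf}, using $Y'$ in place of the set~$P$. Since $T$ is $M$-infinite and primitive $Y$-recursive, the tree itself is $\Delta^0_1(Y)$, and whether a string~$\sigma \in T$ has $M$-infinitely many extensions in~$T$ is a $\Pi^0_1(Y)$ predicate, hence decidable by~$Y'$. The notion of forcing is the usual one for building a low path: conditions are strings~$\sigma \in T$ with $M$-infinitely many extensions in~$T$, ordered by extension. The three families of requirements mirror those of~$\COH$: $\mathcal{R}_k$ decides $(G\oplus Y)'\uh_k$, and $\mathcal{S}_k$ forces $\Phi_e^{G\oplus Y}$ not to be an $M$-infinite $\alpha$-decreasing sequence for every~$e<k$. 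There is no analogue of~$\mathcal{T}_k$ since we only need a path, not cohesiveness.

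Next I would prove the two Shore-blocking lemmas analogous to \Cref{lem:coh-wf-forcing-jump} and \Cref{lem:coh-wf-forcing-wf}. For the jump: given a condition~$\sigma$ and $k \in M$, form the $\Sigma^0_1(Y)$ set~$W$ of strings~$\sigma' \in 2^k$ for which there is an extension~$\tau \succeq \sigma$ in~$T$ forcing $\Phi_e^{\tau\oplus Y}(e)\downarrow$ whenever $\sigma'(e)=1$; $W$ is $M$-finite and nonempty, so take its lexicographically maximal element and the witnessing~$\tau$, which one must still check remains a condition (i.e.\ has $M$-infinitely many extensions in~$T$)—this is where $Y'$ is used to locate such a~$\tau$. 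The maximality of~$\sigma'$ gives that the $0$-bits are genuinely forced to diverge along any extension in~$T$. For preserving $\WF(\alpha)$: after twisting the functionals so that each $\Phi_e^\sigma$ outputs an initial segment of a strictly $\alpha$-decreasing sequence, suppose no extension of~$\sigma$ in~$T$ forces the $\Phi_e^{G\oplus Y}$ (for $e<k$) to stay short; then $Y$-recursively build an increasing sequence of conditions $\tau_0 \preceq \tau_1 \preceq \cdots$ with $\Phi^{\tau_n\oplus Y}_{e_n}(n)\downarrow$ for some~$e_n < k$, apply $\RT^1_k$ (available from~$\BSig_2$) to fix a single~$e<k$ on an $M$-infinite set~$Z$, and read off an $M$-infinite $\alpha$-decreasing sequence in~$\M$, contradicting $\WF(\alpha)$.

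Then the construction and verification go exactly as for~$\COH$: alternate stages of type~$\R$ and type~$\S$ with Shore blocking on larger and larger initial segments, bound $|\sigma_s|$ by~$s$ so that ``the construction reaches stage~$s$'' is a $\Delta^0_1(Y')$ predicate defining a cut, and invoke $\IDel_1(Y')$ (equivalently $\IDel_1(P)$ with $P = Y'$, which suffices since $\M[Y']\models\RCA_0^*$ and indeed $\M[Y']\models\RCA_0$ as $Y'$ is a set coded in a model of $\BSig_2$) to run it at every stage. Taking $G = \bigcup_s \sigma_s$, the $\R$-stages give $Y' \geq_T (G\oplus Y)'$, hence $\M[G]\models\BSig_2$; the $\S$-stages give $\M[G]\models\WF(\alpha)$; and $G$ is a path in~$T$ by construction. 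The main obstacle is the same subtlety present throughout this section: at each step of the forcing one must verify that the extension produced by a blocking lemma is still a \emph{condition}, i.e.\ that the chosen~$\tau$ still has $M$-infinitely many extensions in~$T$—this requires using $Y'$ (which is why $P = Y'$ is exactly the right oracle) and a small argument that among the finitely many candidate~$\tau$'s of bounded length, one can be chosen that is extendible in~$T$, using König's lemma inside~$\M$ for the primitive recursive tree~$T$.
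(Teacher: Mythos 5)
Your overall architecture (internal forcing, $\R$/$\S$ requirements handled by Shore blocking, $\IDel_1(Y')$ to run the construction to every stage) matches the paper, but your choice of forcing notion creates a genuine gap. You force with single extendible stems $\sigma \in T$, whereas the paper's conditions are pairs $(\sigma, T')$ with $T'$ an $M$-infinite primitive $Y$-recursive \emph{subtree} of $T$: the ability to shrink the tree is essential, not a convenience. In your jump lemma, the $\Sigma^0_1(Y)$ set $W$ quantifies over arbitrary $\tau \in T$, so the witness of the lexicographically maximal $\sigma'$ may well be a dead end; your proposed repair (``among the finitely many candidate $\tau$'s one can be chosen extendible, by K\"onig's lemma'') does not work, since \Cref{lem:picking-extendible-node-in-tree} guarantees extendible nodes at every level of $T$ but not among the witnesses of that particular $\sigma'$. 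If you instead restrict $W$ to extendible witnesses, $W$ becomes $\Sigma^0_1(Y')$ and the maximality argument only yields that no \emph{extendible} extension makes a $0$-bit converge; forcing divergence then becomes a $\Pi^0_1(Y')$ rather than $\Pi^0_1(Y)$ statement, which destroys both $Y' \geq_T (G \oplus Y)'$ and the $\Delta^0_1(Y')$-definability of the construction on which the $\IDel_1(Y')$ argument rests. This is exactly why the low basis theorem shrinks to the subtree of nodes on which the computation diverges: the paper's $W$ is the $\Pi^0_1(Y)$ set of those $\sigma'$ whose associated divergence subtree is $M$-infinite, and its lexicographically \emph{minimal} element is taken.

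The well-foundedness lemma suffers from the same defect. Transplanting the $\COH$ argument, you would build a chain $\tau_0 \preceq \tau_1 \preceq \cdots$ in $T$ with $\Phi^{\tau_n \oplus Y}_{e_n}(n)\downarrow$; but the failure of the lemma only provides, above each \emph{extendible} node, some converging extension in $T$, and that extension may itself be a dead end from which the chain cannot be continued. The paper avoids chains altogether: it first combines the $k$ functionals into a single $\Gamma_k$ via \Cref{lem:shore-blocking-wf} (with values below $\beta \ntimes k$), then applies a subtree dichotomy --- either $\{\tau \in T_1 : \Gamma_k^{\tau \oplus Y}(n)\uparrow\}$ is $M$-infinite for some $n$ and becomes the new condition, or for every $n$ all nodes of some level $\ell_n$ converge at $n$, in which case $n \mapsto \max\{\Gamma^\tau_k(n) : \tau \in T_1 \cap 2^{\ell_n}\}$ is an $M$-infinite decreasing sequence of ordinals below $\alpha \ntimes k$, contradicting $\WF(\alpha \ntimes k)$, which holds by \Cref{lem:wf-product}. (A further small inaccuracy: $\M[Y'] \models \RCA_0$ is not available from $\BSig_2$ alone, as that would require $\ISig_1(Y')$; the paper only uses $\M[Y'] \models \RCA_0^*$ together with $\IDel_1(Y')$.)
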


\begin{proof}
We will build the path $G$ using forcing on trees. A condition will be a pair $(\sigma, T')$ with $T' \in S$ an $M$-infinite primitive $Y$-recursive subtree of $T$ with root $\sigma$ and we will have $(\sigma, T_1) \leq (\tau, T_2)$ if $\tau \prec \sigma$ and $T_1 \subseteq T_2$. 
\bigskip

\begin{definition}
Let $(\sigma, U)$ be a condition, and $\Phi_e$ be a Turing functional.
\begin{enumerate}
    \item $(\sigma,U) \Vdash \exists x \Phi_e^{G \oplus Y}(x)\downarrow$ if there is some $x,t < |\sigma|$ such that $\Phi_e^{\sigma \oplus Y}(x)[t] \downarrow$
    \item $(\sigma,U) \Vdash \forall x \Phi_e^{G \oplus Y}(x) \uparrow$ if for every $\tau \in U$ and every $x \in M$, $\Phi_e^{\tau \oplus Y}(x) \uparrow$.
\end{enumerate}
\end{definition}

Note that the relations $(\sigma, U) \Vdash \exists x \Phi^{G \oplus Y}_e(x)\downarrow$
and $(\sigma, U) \Vdash \forall x \Phi^{G \oplus Y}_e(x)\uparrow$ are $\Delta^0_0(Y)$ and $\Pi^0_1(Y)$, respectively.
As before, the definition above induces a forcing relation for all $\Sigma^0_1(G \oplus Y)$ and $\Pi^0_1(G \oplus Y)$ formulas.

For our construction, we want to satisfy the following requirements :
\begin{itemize}
    \item $\mathcal{R}_k$: $(G \oplus Y)' \uh_k$ is decided.
    \item $\mathcal{S}_{k}$: Let $\beta \in \alpha$ be the $<_{\epsilon_0}$-biggest element of index less than $k$, for every $e < k$, $\Phi_e^{G \oplus Y}$ is not an $M$-infinite $<_{\epsilon_0}$-decreasing sequence of elements smaller than~$\beta$.
    \item $\mathcal{T}_k$: $|\sigma| \geq k$.
\end{itemize}

We first prove a simple technical lemma saying that there exists arbitrarily large extendible nodes. This lemma even holds over~$\RCA_0^*$ and can be found in various places in the literature.

\begin{lemma}[Fernandes et al~\cite{fernandes2017analysis}]\label[lemma]{lem:picking-extendible-node-in-tree}
Let $\M = (M,S) \models \RCA_0^*$ be a model and $T \subseteq 2^{<M}$ be an $M$-infinite tree in~$\M$.
Then for every~$\ell \in M$, there is some~$\sigma \in T$ of length~$\ell$ such that $\{ \tau \in T : \tau \mbox{ is comparable with } \sigma \}$ is $M$-infinite.
\end{lemma}
\begin{proof}
Assume by contradiction that for every $\sigma \in 2^{\ell}$ the tree 
$$\{ \tau \in T : \tau \mbox{ is comparable with } \sigma \}$$ is $M$-bounded, this means that $\M \models \forall \sigma \in 2^\ell \exists b_{\sigma} \forall \tau \in 2^{b_{\sigma}}, \sigma \prec \tau \Rightarrow \tau \notin T$.

    The formula $\forall \tau \in 2^{b_{\sigma}}, \sigma \prec \tau \Rightarrow \tau \notin T$ is $\Delta_0^0(T)$ and therefore $\Delta_1^0$ in $\M$. Therefore, we can use $\BSig_1$ (which holds in $\RCA_0^*$) to get :

\[\M \models \exists b \forall \sigma \in 2^\ell \exists b_{\sigma} < b  \forall \tau \in 2^{b_{\sigma}}, \sigma \prec \tau \Rightarrow \tau \notin T\]

Which yields that $T$ is bounded by such a $b$, contradicting our assumption that $T$ is $M$-infinite. So there is some $\sigma \in T$ of length $\ell$ such that $\{ \tau \in T : \tau \mbox{ is comparable with } \sigma \}$ is $M$-infinite. 
\end{proof}

We will need the following two technical lemmas, which are Shore blocking arguments for satisfying the two kind of requirements, namely, deciding the jump and preserving $\WF(\alpha)$. 
In what follows, $(\tau, U) \Vdash (G \oplus Y)' \uh_k = \sigma'$ means that for every~$e < |\sigma'|$, if $\sigma'(e) = 1$ then $(\tau, U) \Vdash \Phi^{G \oplus Y}_e(e)\downarrow$ and if $\sigma'(e) = 0$, then $(\tau, U) \Vdash \Phi^{G \oplus Y}_e(e)\uparrow$. Thus, this relation is $\Pi^0_1(Y)$.

\begin{lemma}\label[lemma]{lem:wkl-wf-forcing-jump}
Let $(\sigma, T_1)$ a condition. For every $k \in M$, there exists some $M$-finite $\sigma'$ and an extension $(\tau, T_2) \leq (\sigma, T_1)$ forcing $(G \oplus Y)' \uh_k = \sigma'$.
\end{lemma}

\begin{proof}
Let $W$ be the set of all $\sigma' \in 2^k$ such that the tree $\{\rho \in T_1 | \sigma'(e) = 0 \to \Phi_e^{\rho \oplus Y}(e) \uparrow\}$ is $M$-infinite. $W$ is $\Pi_0^1(Y)$ and hence $M$-finite, and it is non-empty as it contains the string $1111\dots$.

We can consider $\sigma' \in W$ its lexicographically minimal element, and consider the corresponding tree $T_3$. For every $e < k$ such that $\sigma'(e) = 1$, the minimality of $\sigma'$ gives us that the set of $\tau \in T_3$ such that $\Phi_e^{\tau \oplus Y}(e) \uparrow$ is $M$-finite, so we have a level $h_e$ such that for every $\tau \in T_3 \cap 2^{h_e}$, $\Phi_e^{\tau \oplus Y}(e) \downarrow$. The set $\{e<k|\sigma'(e) = 1\}$ is $M$-finite, so we can consider an upperbound $h$ of all the $h_e$. Finally, by \Cref{lem:picking-extendible-node-in-tree}, there is a $\tau \in T_3 \cap 2^{h}$ such that $T_2 := \{\rho \in T_3 : \rho \textit{ is comparable with } \tau\}$ is $M$-infinite (since $T_3$ is $M$-infinite).

We claim that $(\tau, T_2)$ forces $(G \oplus Y)' \uh_k = \sigma'$. Fix some $e < k$.

Case (a): $\sigma'(e) = 0$. Then $\Phi_e^{\tau \oplus Y}(e) \downarrow$ for every $\tau \in T_2$ (since $T_2 \subseteq T_3$), hence for every $G$ compatible with $(\tau,T_2)$, so $e \notin (G \oplus Y)'$.

Case (b): $\sigma'(e) = 1$. The definition of $\tau$ ensure that $\Phi_e^{\tau \oplus Y}(e) \downarrow$, so $e \in (G \oplus Y)'$. 
\end{proof}

\begin{lemma}\label[lemma]{lem:wkl-wf-forcing-wf}
Let $(\sigma, T_1)$ be a condition. For every $k \in M$, letting~$\Gamma_k$ be the functional of \Cref{lem:shore-blocking-wf}, there is an extension $(\sigma, T_2) \leq (\sigma,T_1)$ forcing $\Gamma^{G \oplus Y}_k$ to be partial.
\end{lemma}

\begin{proof}
By twisting $\Gamma_k$, we can assume that for every~$e, n \in M$, if $\Gamma^{\sigma \oplus Y}_k(n)\downarrow$, then
\begin{itemize}
    \item $\Gamma^{\sigma \oplus Y}_k(m)\downarrow$ for every~$m < n$ ;
    \item $\Gamma^{\sigma \oplus Y}_k(0), \Gamma^{\sigma \oplus Y}_k(1), \dots, \Gamma^{\sigma \oplus Y}_k(n)$ is a strictly $<_{\epsilon_0}$-decreasing sequence of ordinals smaller than~$\beta \ntimes k$.
\end{itemize}
We have two cases.

Case 1: there exists some~$n \in M$ such that for every~$\ell \in M$, there is some~$\tau \in T_1$ of length~$\ell$ such that $\Gamma^{\tau \oplus Y}_k(n)\uparrow$. Let~$T_2 = \{ \tau \in T_1 : \Gamma^{\tau \oplus Y}_k(n)\uparrow \}$. Note that the set $T_2$ is primitive $Y$-recursive, as the set~$T_1$ and the predicate $\Gamma^{\sigma \oplus Y}_k(n)\uparrow$ are primitive $Y$-recursive. The set~$T_2$ is closed-downward, and by assumption, is $M$-infinite. Then $(\sigma, T_2) \leq (\sigma, T_1)$. Moreover, by construction, $(\sigma, T_2) \Vdash \Gamma_k^{G \oplus Y}(n)\uparrow$.

Case 2: for every~$n \in M$, there is some~$\ell_n \in M$ such that 
for every~$\tau \in T$ of length~$\ell_n$, $\Gamma^{\tau \oplus Y}_k(n)\downarrow$.

Then, for every~$n$, let $\alpha_n = \max\ \{ \Gamma^{\tau \oplus Y}_k(n) : \tau \in T_1 \wedge |\tau| = \ell_n \}$.
We claim that for every~$n \in M$, $\alpha_{n+1} <_{\epsilon_0} \alpha_n$.
Indeed, for every~$\tau \in T_1$ such that $|\tau| = \ell_{n+1}$, $\Gamma^{\tau \oplus Y}_k(n+1) <_{\epsilon_0} \Gamma^{(\tau \uh \ell_n) \oplus Y}_k$, so
$$
\max\ \{ \Gamma^{\tau \oplus Y}_k(n+1) : \tau \in T_1 \wedge |\tau| = \ell_{n+1} \}
<_{\epsilon_0} \max\ \{ \Gamma^{\tau \oplus Y}_k(n) : \tau \in T_1 \wedge |\tau| = \ell_n \}
$$
So $\M \mathbin{\cancel{\models}} \WF(\alpha \ntimes k)$. However, by \Cref{lem:wf-product}, since $\M \models \BSig_2 + \WF(\alpha)$, $\M \models \WF(\alpha \ntimes k)$. Contradiction.
\end{proof}

Note that the statement \qt{$(\sigma, T_1)$ forces $\Gamma_k^{G \oplus Y}(n)\uparrow$} is $\Pi^0_1(Y)$.
\bigskip

\textbf{Construction}. We will build a decreasing sequence $(\sigma_s, T_s)$ of conditions and then take $G$ for the union of the $\sigma_s$. We will also build an increasing sequence $(\sigma_s')$ such that $(G \oplus Y)'$ will be the union of the $\sigma_s'$. Initially we take $\sigma_0 = \sigma_0' = \epsilon$ and $T_0 = T$, and during the construction we will ensure that we have $|\sigma_s|,|\sigma'_s| \leq s$ and that the index describing $T_s$ is also less than $s$ at every stage. Each stage will be either of type $\mathcal{R}$, of type $\mathcal{S}$ or of type $\mathcal{T}$. The stage $0$ is of type $\mathcal{R}$.

Assume that $(\sigma_s, T_s)$ and $\sigma'_s$ are already defined. Let $s_0 < s$ be the latest stage at which we switched the stage type. We have three cases.

Case 1: $s$ is of type $\mathcal{R}$. If there exists some index $e \leq s$ such that the $e$-th primitive $Y$-recursive tree $T'$ is $M$-infinite and satisfy $(\sigma_s,T') \leq (\sigma_s, T_s)$ and is such that $(\sigma_s,T')$ forces $(G \oplus Y)' \uh_{s_0} = \sigma'$ for a $\sigma' \in 2^{s_0}$. Then let $\sigma_{s+1} = \sigma_s$, $T_{s+1} = T'$, $\sigma'_{s+1} = \sigma'$ and let $s+1$ be of type $\mathcal{S}$. Otherwise, the elements are left unchanged and we go to the next stage.

Case 2: $s$ is of type $\mathcal{S}$. If there exists some $e, n \leq s$ such that the $e$-th primitive $Y$-recursive tree $T'$ is $M$-infinite and $(\sigma_s, T')$ forces $\Gamma_{s_0}^{G \oplus Y}(n)\uparrow$, then let $\sigma_{s+1} = \sigma_s$, $T_{s+1} = T'$, $\sigma_{s+1}' = \sigma'_s$ and let $s+1$ be of type $\mathcal{T}$. Otherwise, the elements are left unchanged and we go to the next stage.

Case 3: $s$ is of type $\mathcal{T}$. If there exists some $\sigma \in 2^{s_0}$ such that the tree $\{\rho \in T_s | \rho \textit{ compatible with } \sigma\}$ is $M$-infinite and has index $\leq s$, then take this tree for $T_{s+1}$ and let $\sigma_{s+1} = \sigma$, $\sigma'_{s+1} = \sigma'_s$ and let $s+1$ be of type $\mathcal{R}$.


This completes the construction.
\bigskip

\textbf{Verification}. Since the size of $\sigma_s, \sigma'_s$ and the index of $T_s$ are bounded by $s$, there is a $\Delta_1^0(Y')$-formula $\phi(s)$ stating that the construction can be pursued up to stage $s$. Our construction implies that the set $\{s|\phi(s)\}$ is $\Delta_1^0(Y')$ and forms a cut, so by $\IDel_1(Y')$, the construction can be pursued at every stage. 

Let~$G = \bigcup_{s \in M} \sigma_s$.
One eventually switches from stages of type $\mathcal{R}$ to stages of type $\mathcal{S}$, by \Cref{lem:wkl-wf-forcing-jump}. The same occurs to stages $\mathcal{S}$ and $\mathcal{T}$ by \Cref{lem:wkl-wf-forcing-wf} and $\BSig_2$, respectively, so each type of stage changes $M$-infinitely often. Thus, $\{|\sigma_s| : s \in M\}$ and $\{|\sigma'_s| : s \in M \}$ are $M$-infinite, so $G$ is a path in $T$ and $Y' \geq_T (G \oplus Y)'$. In particular, $\M[G] \models \RCA_0 + \BSig_2$. 

Moreover, the stages $s_0$ where we force $\S_{s_0}$ are infinite, so for every $\beta \in \alpha$ and $e \in M$, there is a stage that will force $\Phi^{G \oplus Y}_e$ not to be an $M$-infinite $\alpha$-decreasing sequence of elements less than $\beta$. And thus $\Phi^{G \oplus Y}_e$ will not be an $M$-infinite $\alpha$-decreasing sequence for any $e$, so $\M[G] \models \WF(\alpha)$.

This completes the proof of \Cref{prop:wkl-wf}.
\end{proof}

\begin{theorem}\label[theorem]{thm:wkl-pi11-wfepsilon0}
Fix an ordinal $\alpha \leq \epsilon_0$.
$\RCA_0 + \WKL + \BSig_2 + \WF(\alpha)$ is $\Pi_1^1$-conservative over $\RCA_0 + \BSig_2 + \WF(\alpha)$.
\end{theorem}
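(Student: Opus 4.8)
The plan is to derive \Cref{thm:wkl-pi11-wfepsilon0} as an immediate consequence of the two results already in hand, exactly as was done for $\COH$ in \Cref{thm:coh-pi11-wfepsilon0}. Namely, I would invoke \Cref{theorem:problem-wf-cons} with $\mathsf{P} = \WKL$, so it suffices to check the hypothesis of that theorem: for every countable topped model $\M = (M,S) \models \RCA_0 + \BSig_2 + \WF(\alpha)$ and every $\WKL$-instance $T \in S$ (an $M$-infinite binary tree), there is a path $G$ through $T$ with $\M[G] \models \RCA_0 + \BSig_2 + \WF(\alpha)$.

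The one small gap between \Cref{prop:wkl-wf} and this hypothesis is that \Cref{prop:wkl-wf} is stated for $M$-infinite \emph{primitive $Y$-recursive} trees $T \in S$, where $Y$ tops $\M$, whereas a general $\WKL$-instance is just an arbitrary $M$-infinite binary tree $X \in S$. So first I would reduce to the primitive-recursive case: given an arbitrary $M$-infinite tree $X \in S$, since $\M$ is topped by $Y$ we have $X \leq_T Y$, and one can pass to a primitive $Y$-recursive $M$-infinite subtree (or supertree) $T$ with the same set of paths — a standard move, e.g.\ by taking $T$ to be the tree of strings $\sigma$ all of whose predecessors look extendible in $X$ up to the use bound, which is primitive recursive in $Y$. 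Any path through $T$ is then a path through $X$. Applying \Cref{prop:wkl-wf} to $T$ yields $G$ that is a path in $T$, hence in $X$, with $Y' \geq_T (G \oplus Y)'$ and $\M[G] \models \RCA_0 + \BSig_2 + \WF(\alpha)$, which is exactly what \Cref{theorem:problem-wf-cons} needs.

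Finally, \Cref{theorem:problem-wf-cons} delivers that $\RCA_0 + \WKL + \BSig_2 + \WF(\alpha)$ is $\Pi^1_1$-conservative over $\RCA_0 + \BSig_2 + \WF(\alpha)$. I do not expect any real obstacle here, since all the work has been front-loaded into \Cref{prop:wkl-wf}; the only thing to be a little careful about is the primitive-recursive reduction, and the observation that $\WKL$ is genuinely a $\Pi^1_2$ problem in the sense of the paper so that \Cref{theorem:problem-wf-cons} applies. (Alternatively, since $\WKL$ for arbitrary trees follows from $\WKL$ for primitive recursive trees over $\RCA_0$, one could even phrase $\mathsf{P}$ directly as $\WKL$ restricted to primitive recursive trees and then note that the unrestricted principle is provable in the resulting theory; either way the conclusion is the same.)
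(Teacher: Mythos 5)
Your proposal is correct and matches the paper's proof, which is exactly the one-line combination of \Cref{prop:wkl-wf} with \Cref{theorem:problem-wf-cons}. The extra step you supply --- reducing an arbitrary $M$-infinite binary tree in a topped model to a primitive $Y$-recursive tree with the same paths --- is the standard move the paper leaves implicit, and it goes through as you describe.
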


\begin{proof}
Immediate by \Cref{prop:wkl-wf} and \Cref{theorem:problem-wf-cons}.
\end{proof}

\section{Conservation over $\BSig_2 + \WF(\epsilon_0)$}\label[section]{sect:bsig2-epsilon0}

The cases of~$\SEM$ and $\SADS$ are more complicated than those of~$\WKL$ and~$\COH$, as $\WF(\omega^{\alpha \times \omega})$ seems to be necessary to preserve $\WF(\alpha)$ where~$\alpha \leq \epsilon_0$. The ordinal $\epsilon_0$ is not a fixed point of $\alpha \mapsto \omega^{\alpha \times \omega}$. Fortunately, to preserve $\WF(\alpha)$, it is sufficient to preserve $\WF(\beta)$ for every $\beta < \alpha$. This is why, in \Cref{lem:shore-blocking-wf}, we considered the largest ordinal $\beta < \alpha$ of index at most~$k$, rather than $\alpha$ itself. Thus, we obtain that $\WF(\sup_{\beta < \alpha} \omega^{\beta \times \omega})$ suffices to preserve $\WF(\sup_{\beta < \alpha} \beta)$, and therefore that $\WF(\epsilon_0)$ suffices to preserve itself.


\subsection{$\Pi^1_1$-conservation of $\SEM$}

As mentioned in the introduction, the bounded monotone enumeration principle naturally shows up when formalizing the first-jump control of Ramsey's theorem for pairs. Within the decomposition of $\RT^2_2$ into $\COH + \SADS + \SEM$ of \Cref{prop:rt22-sem-sads-coh}, $\SEM$ is the only statement not known to be $\Pi^1_1$-conservative over~$\RCA_0 + \BSig_2$. It is therefore natural to expect to involve $\BME_*$ in the proof below. Since~$\BME_*$ is equivalent to~$\WF(\omega^\omega)$ and we suppose $\M \models \RCA_0 + \BSig_2 + \WF(\sup_{\beta < \alpha} \omega^{\beta \times \omega})$, we will be able to use~$\BME_*$.

\begin{proposition}\label[proposition]{prop:sem-wf}
Consider a countable model $\M = (M,S) \models \RCA_0 + \BSig_2 + \WF(\sup_{\beta < \alpha} \omega^{\beta \times \omega})$ topped by a set $Y \in S$, where~$\alpha \in M$ is a non-zero ordinal $\leq \epsilon_0$. Then for every infinite stable tournament $R$ in $S$ and every set~$P \gg Y'$ such that $\M[P] \models \RCA_0^*$, there exists $G \subseteq M$ such that  
\begin{enumerate}
    \item $G$ is an $M$-infinite transitive subtournament ;
    \item $P \geq_T (G \oplus Y)'$ ;
    \item $\M[G] \models \RCA_0 + \BSig_2 + \WF(\alpha)$.
\end{enumerate}
\end{proposition}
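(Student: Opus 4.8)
The plan is to mimic the structure of the proof of \Cref{prop:coh-wf}, replacing the Mathias-like forcing for cohesiveness with a forcing notion tailored to building an infinite transitive subtournament of a stable tournament $R$. Recall that since $R$ is stable, every vertex $x$ eventually points all larger vertices in one fixed direction; so $R$ induces a $\Delta^0_2(Y)$ partition of the domain into two pieces $L$ (vertices pointing ``right'' cofinally) and $R'$ (vertices pointed to cofinally). A condition will be a pair $(\sigma, X)$ where $\sigma$ is an $M$-finite sequence forming a transitive subtournament, $X$ is an $M$-infinite reservoir such that every element of $X$ is above $\max \sigma$ and is ``seen correctly'' from $\sigma$ (so that appending elements of $X$ preserves transitivity), with $X$ coded in a suitably simple way relative to $P$. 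Since $P \gg Y'$ and the relevant $\Delta^0_2(Y)$-classes give a $\Pi^0_1(Y')$ class of legitimate reservoir sequences, we can fix a $\Delta^0_1(P)$ member to serve as the spine along which the reservoirs shrink, exactly as $Q$ did in \Cref{prop:coh-wf}. The three families of requirements $\R_k$ (decide $(G\oplus Y)'\uh_k$), $\S_k$ (no $\Phi_e^{G\oplus Y}$ is an $M$-infinite $\alpha$-decreasing sequence, for $e<k$), and a new family $\T_k$ ensuring $|\sigma|\geq k$ (so $G$ is $M$-infinite) are handled in rotating stage types, and the whole construction is run internally with lengths and indices bounded by the stage number, so that $\IDel_1(P)$ certifies it can be carried to every stage and $(G\oplus Y)'\leq_T P$.

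The genuinely new ingredient — and the reason the hypothesis jumps from $\WF(\alpha)$ to $\WF(\omega^{\alpha\ntimes\omega})$ — is the Shore blocking lemma for the $\S_k$ requirements. Forcing transitivity is not as cheap as forcing cohesiveness: when I want to extend $\sigma$ into the reservoir while also defeating the potential $\alpha$-decreasing sequences coded by $\Phi_0^{G\oplus Y},\dots,\Phi_{k-1}^{G\oplus Y}$, I cannot simply ``thin the reservoir forever'' the way one thins a Mathias reservoir, because the reservoir here must stay inside one of the two $\Delta^0_2(Y)$ halves and adding a wrong vertex breaks transitivity irreparably. The standard device is to set up a bounded monotone enumeration: one enumerates, stage by stage, the finite tree of candidate extensions $\tau$ of $\sigma$ (inside the reservoir) together with the least ordinal below $\alpha\ntimes k$ currently witnessed by some $\Phi_e^{\tau\oplus Y}$; each time a longer $\tau$ appears with a strictly smaller witnessed ordinal one extends a terminal node, and $b$-boundedness comes from the fact that this ordinal strictly decreases along each branch while the branching is $M$-finite (via $\BSig_2 \to \forall k\,\RT^1_k$, as in \Cref{lem:coh-wf-forcing-wf}). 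Invoking $\BME_*$ — available because $\M\models\WF(\omega^\omega)$, which follows from $\WF(\omega^{\alpha\ntimes\omega})$ since $\alpha\geq 1$, by Kreuzer–Yokoyama — this enumeration is finite, which gives an extension that permanently forces $\S_k$ on a still-$M$-infinite reservoir. The bound $\omega^{\alpha\ntimes\omega}$ is what is needed to run the relevant instance of $\BME_*$ at the level of ordinals below $\alpha\ntimes k$ for all standard, and nonstandard, $k$; this is where \Cref{lem:shore-blocking-wf} and \Cref{lem:wf-product} are used to collapse the $k$-indexed family of requirements to a single functional $\Gamma_k$ landing below $\beta\ntimes k\leq \alpha\ntimes\omega$.

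Concretely I would: (i) fix a $\Delta^0_2(Y)$ description of the two stable halves of $R$ and the induced $\Pi^0_1(Y')$ class of legal reservoir-spines, and use $P\gg Y'$ to pick a $\Delta^0_1(P)$ spine $Q$ through it; (ii) define conditions $(\sigma,\rho)$ with $\rho\prec Q$ encoding the current reservoir, the forcing relations $(\sigma,\rho)\Vdash\exists x\,\Phi_e^{G\oplus Y}(x)\downarrow$ and its negation (respectively $\Delta^0_0(Y)$ and $\Pi^0_1(Y)$), and verify that a condition has $M$-infinitely many compatible one-point extensions inside the reservoir; (iii) prove the jump-forcing Shore blocking lemma verbatim as in \Cref{lem:coh-wf-forcing-jump}; (iv) prove the well-foundedness Shore blocking lemma by the $\BME_*$ argument sketched above, obtaining for each $k$ and each condition an extension forcing $\Gamma_k^{G\oplus Y}$ partial while keeping $\M[G]\models\RCA_0$; (v) run the rotating construction, bounded by the stage, and verify via $\IDel_1(P)$ that it survives to every stage, that each stage type fires $M$-infinitely often (so $G$ is $M$-infinite and transitive, $(G\oplus Y)'\leq_T P$, hence $\M[G]\models\RCA_0+\BSig_2$), and that every $\Phi_e^{G\oplus Y}$ is defeated, so $\M[G]\models\WF(\alpha)$. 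The main obstacle is step (iv): getting the candidate-extension enumeration to be genuinely a \emph{$b$-bounded monotone enumeration} in the precise sense of Chong–Slaman–Yang — in particular verifying conditions (1)–(3) of the definition and extracting the uniform bound $b$ from the decrease of ordinals below $\alpha\ntimes k$ — and checking that the resulting thinned reservoir still sits inside one stable half of $R$ so that transitivity is maintained. Everything else is a faithful adaptation of the $\COH$ argument.
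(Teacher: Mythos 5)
Your high-level architecture (transitive stem plus reservoir conditions, rotating $\R/\S/\T$ requirements, internal construction certified by $\IDel_1(P)$) matches the paper, but two of your key steps would fail as stated. First, the jump-forcing lemma is \emph{not} verbatim the one for $\COH$. For $\COH$, the set $W$ of candidate jump-strings is $\Sigma^0_1(Y)$ because any finite subset of the reservoir yields a legitimate extension; for a stable tournament, a legitimate extension must be $R$-transitive \emph{and} leave an $M$-infinite reservoir inside a minimal interval, which depends on the $\Delta^0_2(Y)$ partition $A_0(R)\sqcup A_1(R)$. The paper's proof of \Cref{lem:sem-wf-forcing-jump} therefore builds a universal $(\sigma,a)$-tree branching over all $M$-finite 2-partitions of initial segments, observes it is $k$-bounded, invokes $\BME_*$ (available from $\WF(\omega^\omega)$) to conclude it is $M$-finite, extracts a valid leaf by $\BCDe_2$ (\Cref{lem:sem-wf-valid-leaf}), and then uses a compactness argument together with the cohesive set $C$ (for the universal family $\vec{X}$ of paths through all $Y$-primitive recursive trees) to select, $\Delta^0_1(P)$-internally, which half of the split reservoir to keep (\Cref{lem:sem-wf-finite-tree-forcing}). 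You attach $\BME_*$ to the wrong requirement: it is needed already to decide the jump, and the reservoir-splitting, validity, and cohesive-set machinery is absent from your sketch.

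Second, for the $\S_k$ requirements your enumeration is \emph{not} $b$-bounded, so $\BME_*$ does not apply: a strictly decreasing sequence of ordinals below $\beta\ntimes k$ can have arbitrary $M$-finite length, so there is no uniform depth bound. You flag "extracting the uniform bound $b$" as the main obstacle, but it cannot be extracted --- this is precisely why the hypothesis is $\WF(\omega^{\alpha\ntimes\omega})$ rather than $\WF(\omega^\omega)$. The paper's \Cref{lem:sem-wf-forcing-wf} replaces depth-boundedness by an ordinal tagging: each stage $s$ of the (possibly unboundedly deep) enumeration is assigned an ordinal $\zeta_s<\omega^{\beta\ntimes k}<\omega^{\alpha\ntimes\omega}$, defined as a natural sum of $\omega^{\Gamma_k(\cdot)}$ over the leaves, and one shows $\zeta_{s+1}<_{\epsilon_0}\zeta_s$, contradicting $\WF(\omega^{\alpha\ntimes\omega})$ if the tree were $M$-infinite. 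This ordinal-indexed generalization of $\BME_*$ is the genuinely new idea your proposal is missing.
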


\begin{proof}

Fix a uniform enumeration of all $Y$-primitive recursive non-empty tree functionals $T_0, T_1, \dots$, 
that is, for every~$n \in M$ and every~$X$, $T_n^X$ is an $M$-infinite tree. Let
$$
\C = \{ \bigoplus_a X_a : \forall a \forall n\ X_{\langle a, n\rangle} \in [T_n^{Y \oplus X_0 \oplus \dots \oplus X_a}] \}
$$
The class $\C$ is $\Pi^0_1(Y)$ and non-empty. There exists a primitive $Y$-recursive tree whose infinite path are the elements of $\C$, so \Cref{prop:wkl-wf} gives us a set $\vec{X} = \bigoplus_a X_a \in \C$ such that $Y' \geq_T (\vec{X} \oplus Y)'$ and $\M[\vec{X}] \models \RCA_0 + \BSig_2 + \WF(\sup_{\beta < \alpha} \omega^{\beta \times \omega})$.

Furthermore, \Cref{prop:coh-wf} applied on $\M[\vec{X}]$ gives us a set $C \subseteq M$ satisfying  :
\begin{itemize}
    \item $C$ is $\vec{X}$-cohesive ;
    \item $P \geq_T (C \oplus \vec{X} \oplus Y)'$ ;
    \item $\M[\vec{X} \oplus C] \models \RCA_0 + \BSig_2 + \WF(\sup_{\beta < \alpha} \omega^{\beta \times \omega})$
\end{itemize}

Let $A_0(R) := \{x : \forall^{\infty} y R(x,y) \}$ and $A_1(R) := \{x : \forall^{\infty} y R(y,x)\}$. The stability of $R$ gives us $A_0(R) \sqcup A_1(R) = M$, so $A_0(R)$ and $A_1(R)$ are $\Delta_2^0(Y)$.

\begin{definition}
For two sets $F$ and $G$, the notation $F \to_R G$ denotes the formula $(\forall x \in F)(\forall y \in G)R(x,y)$.     
\end{definition}

\begin{definition}[Minimal interval] 
Let $a,b \in \M$, the interval $(a,b)$ is the set of all $x$ such that $R(a,x)$ and $R(x,b)$.

Let $F$ be an $M$-finite transitive subtournament of $R$, for $a,b \in F \cup \{-\infty, +\infty\}$ we say that $(a,b)$ is a minimal interval of $F$ is $F \cap (a,b) = \emptyset$, where we consider that $R(-\infty, x)$ and $R(x, +\infty)$ holds for every~$x \in \M$.
\end{definition}

\begin{definition}
A condition is a pair $(\sigma,a)$ where $\sigma$ is a $M$-finite binary string corresponding to an $R$-transitive subtournament, $a \in \M$ is such that $C \subseteq^* X_a$ and $X_a$ is included in a minimal interval of $\sigma$.
\end{definition}

Being a condition is a $\Delta_1^0((C \oplus X_a \oplus Y)')$-formula. Indeed, being a finite $R$-transitive subtournament is $Y$ computable, checking that $X_a$ is included in a minimal interval of~$\sigma$ is $(X_a \oplus Y)'$-computable, and checking if $C \subseteq^* X_{a}$ can be decided using $(C \oplus X_a)'$ (since $C$ is $\vec{X}$-cohesive, we know that $C \subseteq^* X_a$ or $C \subseteq^* \bar{X_a}$ and these events are $\Sigma_2^0(C \oplus X_a)$). Since $P \geq_T (C \oplus \vec{X} \oplus Y)'$, it is a $\Delta_1^0(P)$-formula.

\begin{definition}
A condition $(\tau,b)$ extends a condition $(\sigma,a)$ (and we write $(\tau,b) \leq (\sigma,a)$) if $\sigma \prec \tau$, $X_b \subseteq X_a$ and $\tau - \sigma \subseteq X_a$.
\end{definition}

For our construction, we want to satisfy the following requirements :
\begin{itemize}
    \item $\mathcal{R}_k$: $(G \oplus Y)' \uh_k$ is decided.
    \item $\mathcal{S}_{k}$: Let $\beta < \alpha$ be the $<_{\epsilon_0}$-biggest ordinal of index less than $k$, for every $e < k$, $\Phi_e^{G \oplus Y}$ is not an $M$-infinite $<_{\epsilon_0}$-decreasing sequence of elements smaller than $\beta$.
    \item $\mathcal{T}_k$: $\card G \geq k$.
\end{itemize}

We will need two technical lemmas, which are Shore blocking arguments for satisfying the two kinds of requirements, namely, deciding the jump and preserving $\WF(\alpha)$. Before this, we introduce some basic combinatorial concepts which will be useful for both lemmas. A $\Sigma^0_1$ formula $\varphi(G)$ is in \emph{normal form} if $\varphi(G) \equiv \exists i \psi(G \uh i)$ for a $\Delta^0_0$ formula $\psi$.

\begin{definition}
Fix a condition $(\sigma, a)$.
\begin{enumerate}
    \item[(1)] A \emph{$(\sigma, a)$-node} is an $M$-finite sequence of the form
$$\nu = \langle (i_0, \rho_0), (i_1, \rho_1), \dots (i_{q-1}, \rho_{q-1}) \rangle$$ such that for every~$s < q$, $i_s < 2$, $\rho_s \subseteq X_a$ is a non-empty $M$-finite $R$-transitive set with $\min \rho_s > \max \rho_{s-1}$ (letting $\rho_{-1} = \sigma$), and for every~$s < t < q$, if $i_s = 0$ then $\rho_s \to_R \rho_t$, and if $i_s = 1$ then $\rho_t \to_R \rho_s$.
We then write $$\pi(\nu) = \sigma \cup \rho_0 \cup \dots \cup \rho_{q-1}$$
    \item[(2)] This $(\sigma, a)$-node is \emph{valid} if for every $s < q$, $\rho_s \subseteq A_{i_s}(R)$
    \item[(3)] A $(\sigma, a)$-node $\nu$ \emph{satisfies} a $\Sigma^0_1(\M)$-formula $\varphi(G, x) \equiv \exists i \psi(G \uh i, x)$  if $\psi(\pi(\nu), |\nu|-1)$ holds.
\end{enumerate}
\end{definition}

\begin{lemma}\label[lemma]{lem:sem-wf-nodes-transitive}
Fix a $(\sigma, a)$-node $\nu$. Then $\pi(\nu)$ is $R$-transitive.
\end{lemma}
\begin{proof}
Say $\nu = \langle (i_0, \rho_0), (i_1, \rho_1), \dots (i_{q-1}, \rho_{q-1}) \rangle$.
We prove by $\Sigma^0_0$-induction that for every~$\ell < t$, $\rho_\ell$ is an $R$-transitive tournament included in a minimal interval of $\sigma \cup \rho_0 \cup \dots \cup \rho_{\ell-1}$ and that $\sigma \cup \rho_0 \cup \dots \cup \rho_{\ell}$ is $R$-transitive.

This is true for $\ell = 0$ since $(\sigma, a)$ is a condition. And assuming the property to be true at a rank $\ell < t-1$, the construction of $\rho_{\ell +1}$ ensures that it is included in the minimal interval of $(\sigma,a)$ and that $\rho_{\ell + 1} \to_R \rho_{j}$ if $i_j = 1$ and $\rho_j \to_R \rho_{\ell + 1}$ if $i_j = 0$, so no elements of $\sigma \cup \rho_0 \cup \dots \cup \rho_{\ell}$ is between two elements of $\rho_{\ell+1}$. Since $\sigma \cup \rho_0 \cup \dots \cup \rho_{\ell}$ is $R$-transitive (by the induction hypothesis), this means that $\rho_{\ell +1}$ is included in one of its minimal intervals, and since it is itself $R$-transitive, $\sigma \cup \rho_0 \cup \dots \cup \rho_{\ell} \cup \rho_{\ell +1}$ is $R$-transitive.
\end{proof}

\begin{lemma}\label[lemma]{lem:sem-wf-valid-leaf-force-positive}
Fix a valid $(\sigma, a)$-node~$\nu$.
Then $(\pi(\nu), b)$ is an extension of~$(\sigma, a)$ for some~$b \in M$.
\end{lemma}
\begin{proof}
Say $\nu = \langle (i_0, \rho_0), (i_1, \rho_1), \dots (i_{q-1}, \rho_{q-1}) \rangle$.
By definition of validity and stability of~$R$, for every~$s < q$, if $i_s = 0$ then $\forall x \in \rho_s \forall^\infty y \in X_a$, $R(x, y)$, and if $i_s = 1$ then $\forall x \in \rho_s \forall^\infty y \in X_a$, $R(y, x)$.
By $\BSig_2$, there exists some~$r \in M$ such that for every~$s < q$, if $i_s = 0$ then $\rho_s \to_R X_a \setminus \{0, \dots, r\}$, and if $i_\ell = 1$ then $X_a \setminus \{0, \dots, r\} \to_R \rho_s$. Let~$b \in M$ be such that $X_b = X_a \setminus \{0, \dots, r\}$ and let~$\tau = \sigma \cup \rho_0 \cup \dots \cup \rho_{t-1}$.
By \Cref{lem:sem-wf-nodes-transitive}, $\tau$ is $R$-transitive and by construction, $X_b \subseteq X_a$ is included in a minimal interval of $\tau$. Thus $(\tau, b)$ is an extension of $(\sigma, a)$.
\end{proof}

\begin{definition}
Fix a condition $(\sigma, a)$.
\begin{enumerate}
    \item[(1)] A \emph{$(\sigma, a)$-tree} is a set of $(\sigma, a)$-nodes closed downard under the prefix relation.
    \item[(2)] A $(\sigma, a)$-tree $E$ is \emph{universal} if for every~$(\sigma,a)$-node $\nu$
    which is not a leaf, there is some~$r \in M$ such that for every $M$-finite 2-partition $A_0 \sqcup A_1 = \{0, \dots, r-1\}$, there is a side~$i < 2$ and an $M$-finite $R$-transitive $\rho \subseteq A_i \cap X_a$ such that $\nu \cdot \langle i, \rho \rangle \in E$.
    \item[(3)] A $(\sigma, a)$-tree $E$ satisfies a $\Sigma^0_1(\M)$ formula if all its nodes satisfies it.
\end{enumerate}
\end{definition}

\begin{lemma}\label[lemma]{lem:sem-wf-exists-maximal-enumeration}
For every~$\Sigma^0_1(\M)$ formula $\varphi(G, x)$ and every condition $(\sigma, a)$, there is a monotone enumeration of a maximal (for inclusion) universal $(\sigma, a)$-tree~$E$ satisfying $\varphi(G, x)$.
\end{lemma} 
\begin{proof}
At stage $0$ enumerate the empty sequence $\langle \rangle$.
At stage $s$, for every leaf $\nu \in E$, if for every 2-partition $A_0 \sqcup A_1 = \{0,\dots,s\}$, there is some~$i < 2$ and some~$\rho \subseteq A_i$ for which 
\begin{enumerate}
    \item[(i)] $\varphi(\pi(\nu) \cup \rho, |\nu|)$ holds
    \item[(ii)] $\nu \cdot \langle (i, \rho) \rangle$ is a $(\sigma, a)$-node.
\end{enumerate}
then enumerate $\nu \cdot \langle (i, \rho) \rangle$ for all such partitions, and go to the next stage.
If nothing is enumerated, simply go to the next stage.
\end{proof}

A maximal universal $(\sigma, a)$-tree is not necessarily $M$-finite.

\begin{lemma}\label[lemma]{lem:sem-wf-valid-leaf}
Every $M$-finite universal $(\sigma, a)$-tree $E$ has a valid leaf.
\end{lemma}
\begin{proof}
We prove it by $\IDel_2$ induction. 
The root $\langle \rangle$ is vacuously valid. Assume  $\nu \in E$ is a valid non-leaf.
By universality of~$E$, there is some~$r \in M$ such that for every $M$-finite 2-partition $A_0 \sqcup A_1 = \{0, \dots, r-1\}$, there is a side~$i < 2$ and an $M$-finite $R$-transitive $\rho \subseteq A_i \cap X_a$ such that $\nu \cdot \langle i, \rho \rangle \in E$. By $\BCDe_2(Y)$ (which is equivalent to $\BSig_2(Y)$), the following partition is $M$-finite:
$$A_0(R) \uh_r \sqcup A_1(R) \uh_r = \{0,\dots, r-1\}$$
Thus, there exists some~$i < 2$ and some~$\rho \subseteq A_i(R)$ such that $\nu \cdot \langle i, \rho \rangle \in E$. By choice of~$i$ and $\rho$, the node $\nu \cdot \langle i, \rho \rangle$ is valid.
\end{proof}

\begin{lemma}\label[lemma]{lem:sem-wf-finite-tree-forcing}
Let $(\sigma,a)$ be a condition, $\varphi(G, x)$ be a $\Sigma^0_1(\M)$ formula, and $E$ be an $M$-finite maximal universal $(\sigma, a)$-tree satisfying $\varphi(G, x)$.
Then there is an extension $(\tau, c) \leq (\sigma, a)$ and some~$q \in M$ such that
$(\tau, c) \Vdash \varphi(G, q-1)$ and $(\tau, c) \Vdash \neg \varphi(G, q)$.
\end{lemma}
\begin{proof}
By \Cref{lem:sem-wf-valid-leaf}, there is a valid leaf  $\nu \in E$.
Since $E$ satisfies $\varphi(G, x)$, $\varphi(\pi(\nu), |\nu|-1)$ holds.
By \Cref{lem:sem-wf-valid-leaf-force-positive}, there is some~$b \in M$ such that $(\pi(\nu), b) \leq (\sigma, a)$.

By maximality of the tree, for every~$s \in M$, there exists a 2-partition $A_0 \sqcup A_1 = \{0,\dots,s\}$, such that for every side~$i < 2$ and every $M$-finite $\rho \subseteq A_i \cap X_a$ such that
$\nu \cdot \langle i, \rho \rangle$ is a $(\sigma, a)$-node, then $\varphi(\pi(\nu) \cup \rho, |\nu|)$ does not hold.

Note that by choice of~$X_b$, for every $M$-finite $R$-transitive $\rho \subseteq A_i \cap X_b$, $\nu \cdot \langle i, \rho \rangle$ is a $(\sigma, a)$-node. Thus, for every~$s \in M$, there exists a 2-partition $A_0 \sqcup A_1 = \{0,\dots,s\}$, such that the following property holds:
\begin{quote}
(P): For every side~$i < 2$ and every $M$-finite $R$-transitive $\rho \subseteq A_i \cap X_b$, $\varphi(\pi(\nu) \cup \rho, |\nu|)$ does not hold.
\end{quote}

By compactness, there exists a 2-partition $A_0 \sqcup A_1 = X_b$ such that (P) holds. Moreover, the class $\C$ of all such partitions is $\Pi^0_1(Y \oplus X_b)$ and non-empty. By definition of $\vec{X}$, there exists some~$c,d \in M$ such that $X_c, X_d$ is a 2-partition of~$X_b$ in~$\C$. Moreover, since~$G \subseteq^* X_b$, either~$G \subseteq^* X_c$ or $G \subseteq^* X_d$. Since they play a symmetric role, say $G \subseteq^* X_c$. Then $(\tau, c)$ is an extension of $(\tau, b)$ forcing $\varphi(G, |\nu|-1)$ and $\neg \varphi(G, |\nu|)$.
\end{proof}

We are now ready to prove our two Shore blocking lemmas.

\begin{lemma}\label[lemma]{lem:sem-wf-forcing-jump}
Let $(\sigma,a)$ be a condition. For every $k \in M$, there exists some $M$-finite $\sigma' \in 2^k$ and an extension $(\tau,b) \leq (\sigma,a)$ such that $(\tau,b) \Vdash (G \oplus Y)' \uh k = \sigma'$    
\end{lemma}
\begin{proof}
Let $\varphi(G, x)$ be the $\Sigma^0_1(Y)$ formula which holds if there exists some~$e_0 < \dots < e_{x-1} < k$ such that for every~$s < x$, $\Phi^{G \oplus Y}_{e_s}(e_s)\downarrow$.

By \Cref{lem:sem-wf-exists-maximal-enumeration}, there exists a monotone enumeration of a maximal universal $(\sigma, a)$-tree~$E$ satisfying $\varphi(G, x)$.
Moreover, since~$\varphi(G, k+1)$ never holds, the enumeration $E$ is $k$-bounded.
Since $\alpha > 0$, $\M \models \WF(\omega^\omega)$, hence $\M \models \BME_*$, so $E$ is $M$-finite.
By \Cref{lem:sem-wf-finite-tree-forcing}, there is an extension $(\tau, c) \leq (\sigma, a)$ and some~$q \in M$ such that $(\tau, c) \Vdash \varphi(G, q-1)$ and $(\tau, c) \Vdash \neg \varphi(G, q)$.
Let~$\sigma' = \{ e < k : \Phi^{\tau \oplus Y}_e(e)\downarrow \}$. Then $(\tau,b) \Vdash (G \oplus Y)' \uh k = \sigma'$.
\end{proof}

\begin{lemma}\label[lemma]{lem:sem-wf-forcing-wf}
Let $(\sigma, a)$ be a condition. For every $k \in M$, letting~$\Gamma_k$ be the functional of \Cref{lem:shore-blocking-wf} for the order $\alpha$, there is an extension $(\tau, b) \leq (\sigma,a)$ forcing $\Gamma^{G \oplus Y}_k$ to be partial.
\end{lemma}

\begin{proof}
By twisting $\Gamma_k$, we can assume that for every~$e, n \in M$, if $\Gamma^{\sigma \oplus Y}_k(n)\downarrow$, then
\begin{itemize}
    \item $\Gamma^{\sigma \oplus Y}_k(m)\downarrow$ for every~$m < n$ ;
    \item $\Gamma^{\sigma \oplus Y}_k(0), \Gamma^{\sigma \oplus Y}_k(1), \dots, \Gamma^{\sigma \oplus Y}_k(m)$ is a strictly $<_{\epsilon_0}$-decreasing sequence of elements smaller than~$\beta \ntimes k$.
\end{itemize}

Let $\varphi(G, x)$ be the $\Sigma^0_1(Y)$ formula which holds if $\Gamma^{G \oplus Y}_k(x)\downarrow$.
By \Cref{lem:sem-wf-exists-maximal-enumeration}, there exists a monotone enumeration of a maximal universal $(\sigma, a)$-tree~$E$ satisfying $\varphi(G, x)$. We have two cases.

Case 1: $E$ is $M$-finite. Then by \Cref{lem:sem-wf-finite-tree-forcing}, there is an extension $(\tau, c) \leq (\sigma, a)$ and some~$q \in M$ such that $(\tau, c) \Vdash \varphi(G, q-1)$ and $(\tau, c) \Vdash \neg \varphi(G, q)$. In particular, $(\tau, c) \Vdash \Gamma^{G \oplus Y}_k(x)\uparrow$.

Case 2: $E$ is not $M$-finite. Then there exists $M$-infinitely many stages~$s \in M$ such that $E[s] \neq E[s+1]$.
By speeding up the enumeration, we can assume without loss of generality that $E[s+1] \neq E[s] \neq \{\langle\rangle\}$ for every~$s \in M$.

For every~$s \in M$ and $\nu \in E[s]$, let
$$\zeta_s(\nu) = \left\{ \begin{array}{ll}
    \omega^{\Gamma_k^{\pi(\nu) \oplus Y}(|\nu|-1)} & \mbox{ if } \nu \mbox{ is a leaf of } E[s]\\
    \nsum_{\mu \sqsupset_{E[s]} \nu} \zeta_s(\mu) & \mbox{ otherwise }
\end{array}\right.
$$
Finally, let $\zeta_s = \zeta_s(\langle\rangle)$.
\bigskip

\textbf{Claim 1}: \emph{$\zeta_{s+1} <_{\epsilon_0} \zeta_s$ for all $s \in M$.}
Since $E[s+1] \neq E[s]$, there is some leaf~$\nu \in E[s]$ which is not a leaf in~$E[s+1]$.
By assumption on $\Gamma_k$, for every~$\mu \sqsupseteq_{E[s+1]} \nu$,
$$
\Gamma_k^{\pi(\mu) \oplus Y}(|\mu|-1) <_{\epsilon_0} \Gamma_k^{\pi(\nu) \oplus Y}(|\nu|-1)
$$
Hence
$$
\nsum_{\mu \sqsupset_{E[s+1]} \nu} \omega^{\Gamma_k^{\pi(\mu) \oplus Y}(|\mu|-1)} <_{\epsilon_0} \omega^{\Gamma_k^{\pi(\nu) \oplus Y}(|\nu|-1)}
$$
So $\zeta_{s+1}(\nu) <_{\epsilon_0} \zeta_s(\nu)$. By $\Delta^0_0$ induction along the ancestors of~$\nu$ up to the root, we obtain $\zeta_{s+1}(\langle\rangle) <_{\epsilon_0} \zeta_s(\langle\rangle)$. This concludes our proof of Claim 1.\\

\textbf{Claim 2}: \emph{$\zeta_s <_{\epsilon_0} \omega^{\beta \times \omega}$ for all $s \in M$.} For all $s \in M$, $\zeta_s \leq_{\epsilon_0} \zeta_0 \leq_{\epsilon_0} \omega^{\beta \ntimes k} <_{\epsilon_0} \omega^{\beta \times \omega}$. 

Then Claim 1 and Claim 2 together contradict the fact that $\M \models \WF(\sup_{\beta < \alpha} \omega^{\beta \times \omega})$.

\end{proof}

\textbf{Construction} We will build a decreasing sequence $(\sigma_s,a_s)$ of conditions and $G$ to be the union of the $\sigma_s$. We will also build an increasing sequence $(\sigma_s')$ such that $G'$ will be the union of the $\sigma_s'$. Initially we take $\sigma_0 = \sigma_0' = \epsilon$ and $a_0$ an index corresponding to $M$, and during the construction we will ensure that we have $|\sigma_s|,|\sigma'_s| \leq s$ and that the index $a_s$ is also less than $s$ at every stage. Each stage will be either of type $\mathcal{R}$, of type $\mathcal{S}$ or of type $\mathcal{T}$. The stage $0$ is of type $\mathcal{R}$.

Assume that $(\sigma_s, a_s)$ and $\sigma'_s$ are already defined. Let $s_0 < s$ be the latest stage at which we switched the stage type. We have three cases.

Case 1: $s$ is of type $\mathcal{R}$. If there exists some condition $(\sigma,a) \leq (\sigma_s,a_s)$ with $|\sigma| \leq s$ and $a \leq s$ such that $(\sigma,a) \Vdash (G \oplus Y)' \uh_{s_0} = \sigma'$ for a $\sigma' \in 2^{s_0}$. Then let $\sigma_{s+1} = \sigma$, $a_{s+1} = a$, $\sigma'_{s+1} = \sigma'$ and let $s+1$ be of type $\mathcal{S}$. Otherwise, the elements are left unchanged and we go to the next stage.

Case 2: $s$ is of type $\mathcal{S}$. If there exists some condition $(\sigma,a) \leq (\sigma_s,a_s)$ with $|\sigma| \leq s$ and $a \leq s$ such that $(\sigma, a)$ forces $\Gamma_{s_0}^{G \oplus Y}(m)\uparrow$ for some~$m \leq s$, then let $\sigma_{s+1} = \sigma$, $a_{s+1} = a'$, $\sigma_{s+1}' = \sigma'_s$ and let $s+1$ be of type $\mathcal{T}$. Otherwise, the elements are left unchanged and we go to the next stage.

Case 3: $s$ is of type $\mathcal{T}$. If there is some condition $(\sigma,a) \leq (\sigma_s,a_s)$ with $\sigma - \sigma_s$ non-empty, $|\sigma| \leq s$ and $a \leq s$, then we let $\sigma_{s+1} = \sigma$, $a_{s+1} = a$ and $\sigma_{s+1}' = \sigma_s'$. Otherwise, the elements are left unchanged and we go to the next stage.

This completes the construction.
\bigskip

\textbf{Verification}. Since the size of $\sigma_s, \sigma'_s$ and the index $a_s$ are bounded by $s$, there is a $\Delta_1^0(P \oplus Y')$-formula $\phi(s)$ stating that the construction can be pursued up to stage $s$. Our construction implies that the set $\{s|\phi(s)\}$ is $\Delta^0_1(P)$ and a cut, so by $\IDel_1(P)$, the construction can be pursued at every stage.

Let~$G = \bigcup_{s \in M} \sigma_s$.
By \Cref{lem:sem-wf-forcing-jump}, \Cref{lem:sem-wf-forcing-wf}, every time we enter a stage of type $\R$ or $\S$ we eventually leave it. Also, for every condition $(\sigma_s, a_s)$, we can add an element of $X_{a_s}$ to $\sigma_s$ and then extract an infinite minimal interval from $X_{a_s}$ using $P$, so every time we enter a stage of type $\T$, we eventually leave it. So, each type of stage changes $M$-infinitely often. Thus, $\{|\sigma'_s| : s \in M \}$ is $M$-infinite, so $P \geq_T (G \oplus Y)'$. In particular, $\M[G] \models \RCA_0 + \BSig_2$. 

Moreover, the stages $s_0$ where we force $\S_{s_0}$ are infinite, so for every $\beta \in \alpha$ and $e \in M$, there is a stage that will force $\Phi^{G \oplus Y}_e$ not to be an $M$-infinite $\epsilon_0$-decreasing sequence of elements less than $\beta$. And thus $\Phi^{G \oplus Y}_e$ will not be an $M$-infinite $\alpha$-decreasing sequence for any $e$, so $\M[G] \models \WF(\alpha)$.

The stages of type $\T$ happens $M$-infinitely often, so the final set $G$ will be $M$-infinite. This completes the proof of \Cref{prop:sem-wf}.

\end{proof}

\begin{corollary}\label[corollary]{cor:sem-wf}
Consider a countable model $\M = (M,S) \models \RCA_0 + \BSig_2 + \WF(\sup_{\beta < \alpha} \omega^{\beta \times \omega})$ topped by a set $Y \in S$, where~$\alpha \in M$ is an ordinal $\leq \epsilon_0$. Then for every infinite stable tournament $R$ in $S$, there exists $G \subseteq M$ such that  
\begin{enumerate}
    \item $G$ is an $M$-infinite transitive subtournament ;
    \item $\M[G] \models \RCA_0 + \BSig_2 + \WF(\alpha)$.
\end{enumerate}
\end{corollary}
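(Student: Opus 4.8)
The plan is to obtain this corollary from \Cref{prop:sem-wf} in exactly the way \Cref{cor:coh-wf} follows from \Cref{prop:coh-wf}: the only hypothesis of \Cref{prop:sem-wf} still missing here is the auxiliary set $P \gg Y'$ with $\M[P] \models \RCA_0^*$, and the Simpson--Smith theorem supplies such a $P$ essentially for free.

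Concretely, I would first produce $P$. Consider the structure with first-order part $M$ whose second-order part consists of the $\Delta^0_1(Y')$-definable sets of $\M$; this is a legitimate countable structure because $\M$ is topped by $Y$, so $Y'$ is available as a $\Delta^0_2(Y)$ oracle inside $\M$, and it satisfies $\RCA_0^*$. By \Cref{thm:wkl-rca0s} it $\omega$-extends to a model of $\RCA_0^* + \WKL$, and in particular there is a set $P \subseteq M$ with $P \gg Y'$ and $\M[P] \models \RCA_0^*$. I would then take $\alpha$ non-zero, as in the statement of \Cref{prop:sem-wf}, and apply that proposition to $\M$, the stable tournament $R$, and this $P$. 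Since $\M \models \RCA_0 + \BSig_2 + \WF(\omega^{\alpha \ntimes \omega})$ by hypothesis, \Cref{prop:sem-wf} returns a set $G \subseteq M$ which is an $M$-infinite transitive subtournament, satisfies $P \geq_T (G \oplus Y)'$, and has $\M[G] \models \RCA_0 + \BSig_2 + \WF(\alpha)$. Discarding clause (2) of \Cref{prop:sem-wf}, which served only to control the jump during the construction, leaves clauses (1) and (3), which are precisely clauses (1) and (2) of the corollary.

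I do not expect any genuine obstacle: all of the combinatorial and proof-theoretic content is already packaged in \Cref{prop:sem-wf} and the propositions it invokes, and this corollary is a bookkeeping step whose sole purpose is to eliminate the hypothesis on $P$, so that \Cref{theorem:problem-wf-cons} can be applied directly in the next section. The one point worth checking is that the invocation of \Cref{thm:wkl-rca0s} is over the right base structure, so that the resulting $\omega$-extension keeps the first-order part $M$ and genuinely contains a set PA over $Y'$ — but this is exactly the standard use of \Cref{thm:wkl-rca0s} already carried out in the proof of \Cref{cor:coh-wf}.
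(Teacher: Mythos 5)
Your proposal matches the paper's proof exactly: it invokes \Cref{thm:wkl-rca0s} to obtain $P \gg Y'$ with $\M[P] \models \RCA_0^*$ and then applies \Cref{prop:sem-wf}, discarding the jump-control clause. The only wrinkle (which the paper itself glosses over) is the non-zero hypothesis on $\alpha$ in \Cref{prop:sem-wf}, but the case $\alpha = 0$ is degenerate since $\WF(0)$ is trivial, so your handling of it is fine.
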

\begin{proof}
By \Cref{thm:wkl-rca0s}, there exists a set~$P \subseteq M$ such that $P \gg Y'$ and $\M[P] \models \RCA_0^*$.
The conclusion follows from~\Cref{prop:sem-wf}.
\end{proof}

\begin{theorem}\label[theorem]{thm:sem-pi11-wfepsilon0}
$\RCA_0 + \BSig_2 + \WF(\epsilon_0) + \SEM$ is $\Pi_1^1$-conservative over $\RCA_0 + \BSig_2 + \WF(\epsilon_0)$.
\end{theorem}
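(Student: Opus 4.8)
The plan is to read off the statement from \Cref{cor:sem-wf} and \Cref{theorem:problem-wf-cons}, so the real content is already contained in the machinery developed for \Cref{prop:sem-wf}. First I would note that $\SEM$ is a $\Pi^1_2$ problem: a $\SEM$-instance is (the coloring coding) an infinite stable tournament $R$, and a $\SEM$-solution is an $M$-infinite $R$-transitive subtournament. By \Cref{theorem:problem-wf-cons} applied with $\mathsf{P} = \SEM$ and $\alpha = \epsilon_0$, it then suffices to check the hypothesis of that theorem: for every countable topped model $\M = (M,S) \models \RCA_0 + \BSig_2 + \WF(\epsilon_0)$ and every infinite stable tournament $R \in S$, there is an $M$-infinite $R$-transitive subtournament $G \subseteq M$ with $\M[G] \models \RCA_0 + \BSig_2 + \WF(\epsilon_0)$.

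Second I would obtain exactly this by instantiating \Cref{cor:sem-wf} at $\alpha = \epsilon_0$. The only point needing comment is that for this value of $\alpha$ the hypothesis $\WF(\omega^{\alpha \ntimes \omega})$ of \Cref{cor:sem-wf} (equivalently of \Cref{prop:sem-wf}) reduces to $\WF(\epsilon_0)$, which $\M$ is assumed to satisfy. This is the announced fixed-point phenomenon: every ordinal notation manipulated in the article lies below $\epsilon_0 = \omega^{\epsilon_0}$, and for $\beta < \epsilon_0$ one has $\beta \ntimes \omega < \epsilon_0$ and $\omega^{\beta \ntimes \omega} < \epsilon_0$ since $\epsilon_0$ is closed under natural product and under $\gamma \mapsto \omega^{\gamma}$; as these values are moreover cofinal in $\epsilon_0$, the statement $\WF(\omega^{\epsilon_0 \ntimes \omega})$ is literally $\WF(\epsilon_0)$. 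Concretely, re-reading the proof of \Cref{prop:sem-wf} one sees that the well-foundedness assumption is used only (i) through \Cref{prop:wkl-wf} and \Cref{prop:coh-wf}, which merely transport whatever $\WF$-assumption the ground model carries, and (ii) in \Cref{lem:sem-wf-forcing-wf}, whose Claim~2 gives $\zeta_s <_{\epsilon_0} \omega^{\beta \ntimes k}$ with $\beta < \alpha = \epsilon_0$ and $k \in M$, hence $\zeta_s <_{\epsilon_0} \epsilon_0$, so that Claims~1 and~2 already contradict $\M \models \WF(\epsilon_0)$. Thus \Cref{cor:sem-wf} with $\alpha = \epsilon_0$ delivers precisely the solution $G$ required in the previous paragraph.

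Finally, feeding this into \Cref{theorem:problem-wf-cons} yields the $\Pi^1_1$-conservativity of $\RCA_0 + \BSig_2 + \WF(\epsilon_0) + \SEM$ over $\RCA_0 + \BSig_2 + \WF(\epsilon_0)$. I expect the only genuine subtlety to be the fixed-point bookkeeping of the middle paragraph, namely verifying that the passage from $\WF(\alpha)$ to $\WF(\omega^{\alpha \ntimes \omega})$ costs nothing once $\alpha = \epsilon_0$; everything else is a direct appeal to results already proved above, with no further forcing construction needed.
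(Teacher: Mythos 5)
Your proposal is correct and is exactly the paper's proof: Theorem~\ref{thm:sem-pi11-wfepsilon0} is obtained by applying \Cref{theorem:problem-wf-cons} to $\mathsf{P} = \SEM$ with the hypothesis supplied by \Cref{cor:sem-wf} at $\alpha = \epsilon_0$, where the fixed point $\omega^{\epsilon_0 \ntimes \omega} = \epsilon_0$ makes the hypothesis and conclusion coincide. (Your side accounting of where well-foundedness enters omits its use in \Cref{lem:sem-wf-forcing-jump} via $\BME_*$, i.e.\ $\WF(\omega^\omega)$, but this is subsumed by $\WF(\epsilon_0)$ and does not affect the argument.)
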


\begin{proof}
Immediate by \Cref{cor:sem-wf} and \Cref{theorem:problem-wf-cons}.
\end{proof}







\subsection{$\Pi^1_1$-conservation of $\SADS$}

We now turn to the proof that $\RCA_0 + \BSig_2 + \WF(\epsilon_0) + \SADS$ is a $\Pi^1_1$-conservative extension of~$\RCA_0 + \BSig_2 + \WF(\epsilon_0)$.
There exists two main constructions of solutions to instances of~$\SADS$: an asymmetric one, which assumes that there is no infinite computable descending sequence, and constructs an ascending one, as in Hirschfeldt and Shore~\cite[Theorem 2.11]{Hirschfeldt2007CombinatorialPW}, and a symmetric one using split pairs, as in Lerman, Solomon and Towsner~\cite{lerman2013separating} or Patey~\cite[Theorem 26]{patey2018partial}. Our proof follows the second construction. 

\begin{proposition}\label[proposition]{prop:sads-wf}
Consider a countable model $\M = (M,S) \models \RCA_0 + \BSig_2 + \WF(\sup_{\beta < \alpha} \omega^{\beta \times \omega})$ topped by a set $Y \in S$, where $\alpha \in M$ is an ordinal $\leq \epsilon_0$. Then for every linear order $L = (M, \leq_L) \in S$ of order type $\omega + \omega^*$, there exists $G \subseteq M$ such that  
\begin{enumerate}
    \item $G$ is an $M$-infinite ascending or descending sequence for $\leq_L$;
    \item $Y' \geq_T (G \oplus Y)'$ ;
    \item $\M[G] \models \RCA_0 + \BSig_2 + \WF(\alpha)$.
\end{enumerate}
\end{proposition}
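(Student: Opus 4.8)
The plan is to follow the structure of the $\SEM$ proof (Proposition \ref{prop:sem-wf}), but using the \emph{split pair} forcing for $\SADS$ instead of the transitive-subtournament forcing. As in the symmetric construction of Lerman, Solomon and Towsner and Patey, a condition will be a triple $(\sigma_0, \sigma_1, x)$ where $\sigma_0$ is an $M$-finite ascending sequence below $x$, $\sigma_1$ is an $M$-finite descending sequence above $x$, and $x \in M$; extension means extending each $\sigma_i$ within the allowed region and possibly moving the cut $x$. The key combinatorial feature is the symmetry between the two sides: at the end of the construction, either $G = \bigcup_s \sigma_{0,s}$ is $M$-infinite (an ascending sequence) or $G = \bigcup_s \sigma_{1,s}$ is $M$-infinite (a descending sequence), because $L$ has order type $\omega + \omega^*$. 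Since we only need $Y' \geq_T (G \oplus Y)'$ rather than $P \geq_T (G \oplus Y)'$, no $\WKL$-style preprocessing of $Y$ is needed; the forcing relation for $\Sigma^0_1$ and $\Pi^0_1$ facts will be $\Delta^0_0(Y)$ and $\Pi^0_1(Y)$ respectively, and the whole construction can be carried out $\Delta^0_1(Y')$, so we can drive it with $\IDel_1(Y')$ exactly as in \Cref{prop:wkl-wf}.

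The three requirements are the familiar ones: $\mathcal{R}_k$ decides $(G \oplus Y)' \uh_k$, $\mathcal{S}_k$ (with $\beta < \alpha$ the largest ordinal of index $< k$) forces $\Phi_e^{G \oplus Y}$ not to be an $M$-infinite $\alpha$-decreasing sequence for $e < k$, and $\mathcal{T}_k$ makes $\card G \geq k$. The construction interleaves the three types of stages via Shore blocking, as in the previous proofs. The two Shore-blocking lemmas are the heart of the matter. For $\mathcal{R}_k$: given a condition, I would take $W$ to be the lexicographically extremal $\sigma' \in 2^k$ for which the forcing of the corresponding $\Sigma^0_1$ / $\Pi^0_1$ statements is compatible with an extension — here one has to decide, for a guess $\sigma'$, whether one can find an extension on \emph{some} side making $\Phi_e^{G \oplus Y}(e)\downarrow$ for the bits $\sigma'(e) = 1$ while keeping the remaining bits divergent along the whole subtree of allowable extensions; the symmetric structure of $\SADS$ conditions lets one do a two-sided version of the argument in \Cref{lem:wkl-wf-forcing-jump}, splitting on which side the convergence witnesses lie. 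Note that unlike $\SEM$, instances of $\SADS$ are computably simple, so there is no need to pass through a monotone enumeration / $\BME_*$ detour: the relevant "tree of extensions" is directly $Y$-primitive recursive.

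For $\mathcal{S}_k$: I would use the functional $\Gamma_k$ of \Cref{lem:shore-blocking-wf}, which collapses the $k$-many threats $\Phi_e^{G\oplus Y}$ ($e<k$) into a single $\beta \ntimes k$-decreasing sequence, and then force $\Gamma_k^{G\oplus Y}$ to be partial. Here, as in Case~2 of \Cref{lem:sem-wf-forcing-wf}, if no extension makes $\Gamma_k^{G\oplus Y}$ partial, one builds a rank function on the tree of extensions: for each side $i<2$ and each level, take the sup of $\omega^{\Gamma_k^{\tau \oplus Y}(n)}$ over the $M$-finitely many relevant $\tau$, combine the two sides with a natural sum, and show this yields a strictly $<_{\epsilon_0}$-decreasing $M$-sequence of ordinals below $\omega^{\beta \ntimes k} \ntimes 2 <_{\epsilon_0} \omega^{\alpha \ntimes \omega}$, contradicting $\M \models \WF(\omega^{\alpha \ntimes k})$ (which is why the hypothesis is $\forall k\,\WF(\omega^{\alpha \ntimes k})$ rather than just $\WF(\alpha)$). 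I expect the main obstacle to be bookkeeping the two-sided case analysis cleanly in the jump-forcing lemma: with split-pair conditions the "tree of extensions" branches both on which $\sigma_i$ is extended and on which region the next element is drawn from, and one must verify that $\BSig_2$ (equivalently $\BCDe_2$ and $\forall k \RT^1_k$) suffices to find a uniform bound and thereby pin down the lexicographically extremal guess — just as $\BSig_2$ was used in \Cref{lem:coh-wf-forcing-wf} and \Cref{lem:sem-wf-valid-leaf}. Once both Shore-blocking lemmas are in place, the Construction/Verification is a routine copy of the $\WKL$ and $\SEM$ proofs: the $\{s : \phi(s)\}$ cut argument gives totality of the construction, the $\mathcal{R}$-stages give $Y' \geq_T (G\oplus Y)'$ hence $\M[G]\models\RCA_0+\BSig_2$, the $\mathcal{S}$-stages give $\M[G]\models\WF(\alpha)$, and the $\mathcal{T}$-stages together with the $\omega+\omega^*$ shape of $L$ give that $G$ is an $M$-infinite ascending or descending sequence.
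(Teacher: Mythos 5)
Your overall architecture (split-pair conditions, three requirement types, Shore blocking, the $\Gamma_k$ rank-function argument below $\omega^{\beta\ntimes k}$ to get a contradiction with $\forall k\,\WF(\omega^{\alpha\ntimes k})$) matches the paper, and your ordinal computation for $\mathcal{S}_k$ is essentially the one used. But there is a genuine gap in how you organize the requirements. You state $\mathcal{R}_k$ and $\mathcal{S}_k$ for a single $G$ and propose to decide $(G\oplus Y)'\uh_k$ by a lexicographically extremal guess as in \Cref{lem:wkl-wf-forcing-jump}. With split-pair forcing this cannot work for a designated side: to force $\Phi_e^{G_0\oplus Y}(e)\uparrow$ for the ascending part you would need that no extension of $\sigma_0$ by elements of $U$ yields convergence, but membership in $U$ (finitely many $L$-predecessors) is only $\Delta^0_2(Y)$, so the $\Sigma^0_1$ search for convergence witnesses ranges over arbitrary split-pair extensions, and the witnesses it finds may use elements that in fact lie in $V$. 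The paper resolves this with a pairing argument: the requirements are satisfied \emph{disjunctively} ($\R^0_k\vee\R^1_k$, $\R^0_k\vee\S^1_k$, $\S^0_k\vee\R^1_k$, $\S^0_k\vee\S^1_k$, plus $\T^0_k\wedge\T^1_k$), via a single binary $(\sigma,\tau)$-tree labelled by split pairs whose branching records which side is extended; a maximal \emph{valid} node (found by $\mathsf{L}\Delta^0_2$) gives an extension answering at least one side's question negatively. Only at the very end does one extract a single $i<2$ such that $\R^i_k\wedge\S^i_k$ holds for cofinally many $k$, and take $G=G_i$. Note also that both $G_0$ and $G_1$ are $M$-infinite; the disjunction is not about which side is infinite but about which side's jump and well-foundedness are controlled.

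A consequence of this disjunctive structure is that your claim that the jump-forcing step needs no ordinal machinery is incorrect here: since one must force mixed requirements such as $\R^0_k\vee\S^1_k$, the jump control and the $\WF$ preservation have to be combined in a single lemma covering all four combinations of the formulas $\Phi_k$ (jump) and $\Psi_k$ ($\Gamma_k$-convergence), and the $M$-finiteness of the relevant $(\sigma,\tau)$-tree for the mixed combinations genuinely uses $\WF(\omega^{\alpha\ntimes k})$. (The paper's remark after \Cref{lem:sads-wf-forcing-wf-jump} makes precisely this point: pure first-jump control alone would need no well-foundedness assumption, but the disjunctive requirements force the two to be combined.) Once the requirements are set up disjunctively, the rest of your plan — the cut argument for totality of the construction, $Y'\geq_T(G_i\oplus Y)'$, and $\M[G_i]\models\WF(\alpha)$ — goes through as you describe.
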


\begin{proof}
Let $L = (M, \leq_L)$ be such an order. If $\M$ already contains an $M$-unbounded ascending or descending sequence for $\leq_L$, then we are done, so assume otherwise. Let $U \subseteq M$ and $V \subseteq M$ be the sets of elements that have a finite amount of predecessors and successors, respectively. By assumption, $U \sqcup V = M$, and $U$ and $V$ are both $\Delta^0_2(Y)$.

A condition is a pair $(\sigma, \tau)$ with $\sigma$ an $M$-finite $L$-ascending sequence included in $U$ and $\tau$ an $M$-finite $L$-descending sequence included in $V$. We have $(\sigma_2, \tau_2) \leq (\sigma_1, \tau_1)$ if $\sigma_2 \succeq \sigma_1$ and $\tau_2 \succeq \tau_1$.

Being a condition is a $\Delta_2^0(Y)$-predicate, while the extension relation is $\Delta_1^0$. We define a weaker notion of pair which is $\Delta^0_1(Y)$:
\begin{definition}
A \emph{split pair} is a pair $(\sigma, \tau)$ of strings such that $\sigma$ is an $M$-finite $L$-ascending sequence, $\tau$ is an $M$-finite $L$-descending sequence, and $\max_L \sigma <_L \min_L \tau$.
\end{definition}

\begin{definition}
Let $(\sigma_0, \sigma_1)$ be a condition, $e, x \in M$ and $i < 2$, we write:
\begin{itemize}
    \item $(\sigma_0, \sigma_1) \Vdash_i \Phi_e^{G \oplus Y}(x) \downarrow$ if $\Phi_e^{\sigma_i \oplus Y}(x) \downarrow$. 
    \item $(\sigma_0, \sigma_1) \Vdash_i \Phi_e^{G \oplus Y}(x) \uparrow$ if $\forall \sigma'_i \succeq \sigma_i$ such that $(\sigma'_i, \sigma_{1-i})$ is a split pair, $\Phi_e^{\sigma'_i \oplus Y}(x)\uparrow$.
\end{itemize}   
\end{definition}

We want our construction to satisfy for some~$i < 2$ the following requirements for every~$k \in M$:
\begin{itemize}
    \item $\mathcal{R}^i_k$: $(G_i \oplus Y)' \uh_k$ is decided.
    \item $\mathcal{S}^i_{k}$: Let $\beta < \alpha$ be the $<_{\epsilon_0}$-biggest ordinal of index less than $k$, for every $e < k$, $\Phi_e^{G_i \oplus Y}$ is not an $M$-infinite $<_{\epsilon_0}$-decreasing sequence of elements smaller than $\beta$.
    \item $\mathcal{T}^i_k$: $\card G_i \geq k$.
\end{itemize}
For this, we will use a pairing argument, and ensure that for every~$k \in M$, the following requirements are met: $\R^0_k \vee \R^1_k$, $\R^0_k \vee \S^1_k$, $\S^0_k \vee \R^1_k$, $\S^0_k \vee \S^1_k$, and $\T^0_k \wedge \T^1_k$.

The core property of split pairs $(\sigma, \tau)$ comes from the fact that if $\sigma \not \subseteq U$, then $\tau \subseteq V$. In particular, if $(\sigma, \tau)$ is a condition and $(\sigma', \tau')$ is a split pair such that $\sigma \preceq \sigma'$ and $\tau \preceq \tau'$, then either $(\sigma', \tau)$, or $(\sigma, \tau')$ is a valid condition extending $(\sigma, \tau)$.

We introduce a forcing question~\footnote{The proof of \Cref{lem:sads-wf-finite-tree-forcing} was flawed in the published version of this article at Transactions of the American Mathematical Society, 378 (2025), no.3, pp. 2157--2186. We fixed it here by introducing the forcing question and proving \Cref{lem:sads-forcing-question-sigma01}.} which thanks to \Cref{lem:sads-forcing-question-sigma01}, allows us to $Y'$-computably determine a side $i < 2$ and construct extensions forcing either a given property or its negation on side $i$.

\begin{definition}
    Let $(\sigma, \tau)$ be a condition and $e_0, e_1, x_0, x_1 \in M$. We write $(\sigma, \tau) \qvdash \Phi_{e_0}^{G_0 \oplus Y}(x_0) \downarrow \vee \Phi_{e_1}^{G_1 \oplus Y}(x_1) \downarrow$ if there exists some split pair $(\sigma', \tau')$ such that $\sigma \preceq \sigma'$, $\tau \preceq \tau'$, $\Phi_{e_0}^{\sigma' \oplus Y}(x_0) \downarrow$ and $\Phi_{e_1}^{\tau' \oplus Y}(x_1) \downarrow$. 
\end{definition}

The relation $(\sigma, \tau) \qvdash \Phi_{e_0}^{G_0 \oplus Y}(x_0) \downarrow \vee \Phi_{e_1}^{G_1 \oplus Y}(x_1) \downarrow$ is $\Sigma^0_1(Y)$. 

\begin{lemma}\label[lemma]{lem:sads-forcing-question-sigma01}
    Let $(\sigma_0, \tau_0)$ be a condition, and let $e_0, e_1, x_0, x_1 \in M$. Then:
    \begin{itemize}
        \item If $(\sigma_0, \tau_0) \qvdash \Phi_{e_0}^{G_0 \oplus Y}(x_0) \downarrow \vee \Phi_{e_1}^{G_1 \oplus Y}(x_1) \downarrow$, then there exist an extension $(\sigma_1, \tau_1) \leq (\sigma_0, \tau_0)$ and some $i < 2$ such that $(\sigma_1, \tau_1) \Vdash_i \Phi_{e_i}^{G_i \oplus Y}(x_i) \downarrow$.
        \item If $(\sigma_0, \tau_0) \nqvdash \Phi_{e_0}^{G_0 \oplus Y}(x_0) \downarrow \vee \Phi_{e_1}^{G_1 \oplus Y}(x_1) \downarrow$, then there exist an extension $(\sigma_1, \tau_1) \leq (\sigma_0, \tau_0)$ and some $i < 2$ such that $(\sigma_1, \tau_1) \Vdash_i \Phi_{e_i}^{G_i \oplus Y}(x_i) \uparrow$.
    \end{itemize}
    Furthermore, deciding which case holds and computing the extension $(\sigma_1, \tau_1)$ can be done $Y'$-computably.
\end{lemma}

\begin{proof}
Assume first that $(\sigma_0, \tau_0) \qvdash \Phi_{e_0}^{G_0 \oplus Y}(x_0) \downarrow \vee \Phi_{e_1}^{G_1 \oplus Y}(x_1) \downarrow$. Then there exists a split pair $(\sigma_2, \tau_2)$ such that $\sigma_0 \preceq \sigma_2$, $\tau_0 \preceq \tau_2$, $\Phi_{e_0}^{\sigma_2 \oplus Y}(x_0)\downarrow$ and $\Phi_{e_1}^{\tau_2 \oplus Y}(x_1)\downarrow$. Using $Y$, we can find such a split pair. Since $(\sigma_2, \tau_2)$ is a split pair, either $\sigma_2 \subseteq U$ or $\tau_2 \subseteq V$, and deciding which case holds can be done $Y'$-computably. If $\sigma_2 \subseteq U$, then letting $\sigma_1 := \sigma_2$ and $\tau_1 = \tau_0$ yields an extension $(\sigma_1, \tau_1) \leq (\sigma_0, \tau_0)$ such that $(\sigma_1, \tau_1) \Vdash_0 \Phi_{e_0}^{G_0 \oplus Y}(x_0) \downarrow$. The case $\tau_2 \subseteq V$ is symmetrical.  \\

Now suppose that $(\sigma_ 0, \tau_ 0) \nqvdash \Phi_{e_0}^{G_0 \oplus Y}(x_0) \downarrow \vee \Phi_{e_1}^{G_1 \oplus Y}(x_1) \downarrow$. Consider the program which runs through all $M$-finite $L$-ascending sequences $\sigma_2$ such that $\sigma_ 0 \preceq \sigma_2$, $\max_L \sigma_2 <_L \min_L \tau_0$ and $\Phi_{e_0}^{\sigma_2 \oplus Y}(x_0) \downarrow$. Whenever a new $\sigma_2$ is found whose value of $\max_L \sigma_2$ is smaller than all previously found ones, enumerate $\max_L \sigma_2$. By construction, this program yields a $Y$-computable $L$-descending sequence. By our assumption, $\M$ does not contain any solution to the given instance of $\SADS$, this enumeration must therefore be finite. Hence, either no such $\sigma_2$ is enumerated, in which case $(\sigma_ 0, \tau_ 0) \Vdash_0 \Phi_{e_0}^{G_0 \oplus Y}(x_0) \uparrow$ and we are done, or else we can $Y'$-computably find some $\sigma_2$ such that $\sigma_0 \preceq \sigma_2$, $\max_L \sigma_2 <_L \min_L \tau_0$ and $\Phi_{e_0}^{\sigma_2 \oplus Y}(x_0) \downarrow$ and for which $\max_L \sigma_2$ is minimal. 

Similarly, either we already have $(\sigma_ 0, \tau_ 0) \Vdash_1 \Phi_{e_1}^{G_1 \oplus Y}(x_1) \uparrow$ or we can $Y'$-computably find some $M$-finite $L$-descending sequence $\tau_2$ such that $\tau_0 \preceq \tau_2$, $\min_L \tau_2 >_L \max_L \sigma_0$ and $\Phi_{e_1}^{\tau_2 \oplus Y}(x_1) \downarrow$ and such that $\min_L \tau_2$ is maximal among all such sequences. 

In the case where such $\sigma_2$ and $\tau_2$ are found, we cannot have $\sigma_2 \subseteq U$ and $\tau_2 \subseteq V$, since then $(\sigma_2, \tau_2)$ would form a split pair, contradicting our assumption that $(\sigma_0, \tau_0) \nqvdash \Phi_{e_0}^{G_0 \oplus Y}(x_0) \downarrow \vee \Phi_{e_1}^{G_1 \oplus Y}(x_1) \downarrow$, and we can $Y'$-computably find whether $\sigma_2 \not \subseteq U$ or $\tau_2 \not \subseteq V$. Without any loss of generality, assume $\sigma_2 \not \subseteq U$. Define $\sigma_1 = \sigma_0$ and $\tau_1 = \tau_0 \cup \{\max_L \sigma_2\}$, then $(\sigma_1, \tau_1)$ is a condition extending $(\sigma_0, \tau_0)$. Indeed, since $\sigma_2 \not \subseteq U$, we have $\max_L \sigma_2 \in V$, and by construction $\max_L \sigma_2 <_L \min_L \tau_0$. Finally, we have $(\sigma_1, \tau_1) \Vdash_0 \Phi_{e_0}^{G_0 \oplus Y}(x_0) \uparrow$. Indeed, by minimality of $\max_L \sigma_2$, there is no $M$-finite $L$-ascending sequence $\sigma_2'$ such that $\sigma_1 = \sigma_0 \preceq \sigma_2'$, $\max_L \sigma_2' <_L \min_L \tau_1 = \max_L \sigma_2$ and $\Phi_{e_0}^{\sigma_2' \oplus Y}(x_0) \downarrow$.
\end{proof}

\begin{definition}
Fix a condition $(\sigma, \tau)$.

A \emph{$(\sigma, \tau)$-tree} is a binary tree $T$ labelled by a family of split pairs $(\sigma_{\rho}, \tau_{\rho})_{\rho \in T}$ such that for all $\rho \in T$:
\begin{itemize}
    \item $\sigma \preceq \sigma_{\rho}$ and $\tau \preceq \tau_{\rho}$ ;
    \item For every $\mu \in T$, if $\rho \cdot 0 \preceq \mu$ then $\sigma_{\rho} \preceq \sigma_{\mu}$ and if $\rho \cdot 1 \preceq \mu$ then $\tau_{\rho} \preceq \tau_{\mu}$ ;
\end{itemize}


     A node $\rho$ is said to be \emph{valid} if for every $\mu \in T$ such that $\mu \cdot 0 \preceq \rho$, $\sigma_{\mu} \subseteq U$ and for every $\mu \in T$ such that $\mu \cdot 1 \preceq \rho$, $\tau_{\mu} \subseteq V$. Being valid is a $\Delta^0_2(Y)$ condition.
\end{definition}

\begin{definition}
    Let $T$ a $(\sigma,\tau)$-tree 
    \begin{itemize}
        \item A node $(\sigma_{\rho}, \tau_{\rho})$ in $T$ \emph{satisfies} a pair of $\Sigma^0_1(Y)$-formulas $(\varphi_0(G_0, x), \varphi_1(G_1,x))$ if $\varphi_0(\sigma, |\rho| - \card \rho)$ and $\varphi_1(\tau, \card \rho)$ holds (where $\card \rho$ is the cardinal of $\rho$ seen as a set).
        \item $T$ \emph{satisfies} $(\varphi_0(G_0, x), \varphi_1(G_1,x))$ if every node of $T$ satisfies it.
    \end{itemize}
\end{definition}

\begin{lemma}\label[lemma]{lem:sads-wf-exists-maximal-tree}
For every pair $(\varphi_0(G_0, x), \varphi_1(G_1,x))$ of $\Sigma^0_1(Y)$ formulas and every condition $(\sigma, \tau)$, there is a maximal $\Sigma^0_1$ $(\sigma, \tau)$-tree~$T$ satisfying it.
\end{lemma}

\begin{proof}
$T$ will be the limit of the following increasing sequence of $Y$-computable $(\sigma, \tau)$-trees $(T_s)_{s \in M}$ that all satisfy $(\varphi_0(G_0, x), \varphi_1(G_1,x))$ (and therefore $T$ will also be a $(\sigma, \tau)$-tree satisfying $(\varphi_0(G_0, x), \varphi_1(G_1,x))$). 

At stage $0$, let $T_0$ be the empty tree. It vacuously satisfies $(\varphi_0(G_0, x), \varphi_1(G_1,x))$.

At stage $s+1$, if there exists a split pair $(\sigma', \tau')$ with $|\sigma'|,|\tau'| \leq s$, a $\rho \in T_s$ and an $i < 2$ with $\rho \cdot i \notin T_s$ such that adding $\rho \cdot i$ to $T_s$ with the index $(\sigma', \tau')$ still yields a $(\sigma, \tau)$-tree satisfying $(\varphi_0(G_0, x), \varphi_1(G_1,x))$, then let $T_{s+1}$ be that tree (choose the smallest split pair possible to enforce the maximality of $T$). If no such element is found, simply go to the next stage.
\end{proof}

\begin{lemma}\label[lemma]{lem:sads-wf-finite-tree-forcing}
Let $(\sigma,\tau)$ be a condition, $(\varphi_0(G_0, x), \varphi_1(G_1,x))$ be a pair of $\Sigma^0_1(Y)$ formulas, and $T$ be an $M$-finite maximal $(\sigma, \tau)$-tree satisfying $(\varphi_0(G_0, x), \varphi_1(G_1,x))$.
Then there is an extension $(\hat \sigma, \hat \tau) \leq (\sigma, \tau)$ and some~$p,q \in M \cup \{-1\}$ such that the following holds:
\begin{itemize}
    \item $(\hat \sigma, \hat \tau) \Vdash_0 \varphi_0(G_0, p)$ and $(\hat \sigma, \hat \tau) \Vdash_1 \varphi_1(G_1,q)$
    \item $(\hat \sigma, \hat \tau) \Vdash_0 \neg \varphi_0(G_1,p+1)$ or $(\hat \sigma, \hat \tau) \Vdash_1 \neg \varphi_1(G_1,q+1)$.
\end{itemize}
With the convention that  $(\hat \sigma, \hat \tau) \Vdash_i \varphi_i(G^i, -1)$ always holds.

\end{lemma}
\begin{proof}
If $T$ is empty, then $(\sigma,\tau) \nqvdash \varphi_0(G_0,0) \vee \varphi_1(G_1,0)$ as otherwise the corresponding split pair could be added in position $\epsilon$ in $T$, contradicting its maximality. Thus, by \Cref{lem:sads-forcing-question-sigma01}, there exists some extension $(\hat \sigma, \hat \tau) \leq (\sigma, \tau)$ and some side $i < 2$ such that $(\hat \sigma, \hat \tau) \Vdash_i \neg \varphi_i(G_i,0)$. Thus, we can assume that $T$ is not empty.

Consider a valid $\rho \in T$ having no valid descendant in $T$, this is possible since the set of valid element of $T$ is non empty because $(\sigma_\epsilon, \tau_\epsilon)$ is a vacuously valid, so we can take a maximal valid element of $T$ using $\mathsf{L}\Delta^0_2$ (which we have by $\BSig_2$).

Since $(\sigma_\rho, \tau_\rho)$ is a split pair, we have two cases:

Case 1: $\sigma_{\rho} \subseteq U$. In that case $\rho \cdot 0 \notin T$ by maximality of $\rho$. Then take $p = |\rho| - \card \rho$, $q = \card \rho - 1$ and $(\sigma', \tau') = (\sigma_{\rho}, \tau_{\rho \uh (i_0+1)})$ with $i_0$ the biggest index $i < |\rho|$ such that $\rho(i) = 1$ (if no such index exists we take $\tau$ instead of $\tau_{\rho \uh (i_0+1)}$), the pair $(\sigma', \tau')$ is a valid condition by definition of validity and the hypothesis $\sigma_{\rho} \subseteq U$. $(\sigma', \tau') \Vdash_0 \varphi_0(G_0, p)$ and $(\sigma', \tau') \Vdash_1 \varphi_1(G_1,q)$ because $T$ satisfies $(\varphi_0(G_0, x), \varphi_1(G_1,x))$. 

We have $(\sigma', \tau') \nqvdash \varphi_0(G_0,p + 1) \vee \varphi_1(G_1,q + 1)$ as otherwise the corresponding split pair could be added in position $\rho \cdot 0$ in $T$, contradicting its maximality. Hence, by \Cref{lem:sads-forcing-question-sigma01}, there exists some extension $(\hat \sigma, \hat \tau)$ and such that either $(\hat \sigma, \hat \tau) \Vdash_0 \neg \varphi_0(G_0,p + 1)$ or $(\hat \sigma, \hat \tau) \Vdash_1 \neg \varphi_1(G_1,q+1)$ holds.


Case 2: The case $\tau_{\rho} \subseteq V$ can be treated similarly. \\
\end{proof}

\begin{lemma}\label{lem:sads-wf-forcing-wf-jump}
Let $k \in M$, let $\Phi_k(G,x)$ be the formula: $\exists e_0 < e_1 < \dots < e_{x} < k$ such that $\Phi_{e_i}^{G \oplus Y}(e_i) \downarrow$ for all $i < x$, let $\Psi_k(G,x)$ be the formula: $\Gamma^{G \oplus Y}_k(x) \downarrow$ (with $\Gamma_k$ the functional of \Cref{lem:shore-blocking-wf} for the order $\alpha$).

For any combination $\phi_0(G,x), \phi_1(G,x) \in \{\Phi_k(G,x), \Psi_k(G,x)\}$ and every condition $(\sigma, \tau)$, there exists an extension  $(\hat \sigma, \hat \tau) \leq (\sigma, \tau)$ and some~$p,q \in M \cup \{-1\}$ such that the following holds:
\begin{itemize}
    \item $(\hat \sigma, \hat \tau) \Vdash_0 \varphi_0(G_0, p)$ and $(\hat \sigma, \hat \tau) \Vdash_1 \varphi_1(G_1,q)$
    \item $(\hat \sigma, \hat \tau) \Vdash_0 \neg \varphi_0(G_1,p+1)$ or $(\hat \sigma, \hat \tau) \Vdash_1 \neg \varphi_1(G_1,q+1)$.
\end{itemize}
With the convention that $(\hat \sigma, \hat \tau) \Vdash_i \varphi_i(G_i, -1)$ always holds.
\end{lemma}

\begin{proof}
By \Cref{lem:sads-wf-exists-maximal-tree} there exists a maximal $\Sigma^0_1$ $(\sigma, \tau)$-tree~$T$ satisfying $$(\varphi_0(G, x), \varphi_1(G,x)).$$ To apply \Cref{lem:sads-wf-finite-tree-forcing} and conclude, we just need to prove the $M$-finiteness of~$T$. 

By twisting $\Gamma_k$, we can assume that for every~$e, n \in M$, if $\Gamma^{\sigma \oplus Y}_k(n)\downarrow$, then
\begin{itemize}
    \item $\Gamma^{\sigma \oplus Y}_k(m)\downarrow$ for every~$m < n$ ;
    \item $\Gamma^{\sigma \oplus Y}_k(0), \Gamma^{\sigma \oplus Y}_k(1), \dots, \Gamma^{\sigma \oplus Y}_k(m)$ is a strictly $<_{\epsilon_0}$-decreasing sequence of elements smaller than~$\beta \ntimes k$.
\end{itemize}

Assume by contradiction that $T$ is $M$-infinite and let $(T_s)_{s \in M}$ be an increasing sequence of computable tree approaching $T$. Without loss of generality we can assume that $T_s \neq T_{s+1}$ for all $s \in M$.

For every $s \in M$ and $\rho$ node of $T_s$ indexed by $(\sigma_{\rho}, \tau_{\rho})$ with set of immediate successors $S_\rho$ ($\card S_\rho \leq 2$ since $T_s$ is a binary tree), define inductively :
$$\zeta_s(\rho) = \omega^{f_0(\rho) \nplus f_1(\rho)}(2 - \card S_\rho)\ \nplus \nsum_{\mu \in S_\rho} \zeta_s(\mu)
$$
Where $f_0(\rho) = \Gamma_k^{\sigma_{\rho} \oplus Y}(|\rho| - \card \rho)$ if $\varphi_0(G,x) = \Psi_k(G,x)$ and $f_0(\rho) = k - (|\rho| - \card \rho)$ if $\varphi_0(G,x) = \Phi_k(G,x)$. And $f_1(\rho) = \Gamma_k^{\tau_{\rho} \oplus Y}(\card \rho)$ if $\varphi_1(G,x) = \Psi_k(G,x)$ and $f_1(\rho) = k - \card \rho$ if $\varphi_1(G,x) = \Phi_k(G,x)$.

And let $\zeta_s = \zeta_s(\epsilon)$. \\

\textbf{Claim}. \emph{The sequence $\zeta_s$ is an $M$-infinitely $<_{\epsilon_0}$-decreasing sequence of ordinals less than $\omega^{\beta \times \omega}$}

Let $s \in M$, since $T_{s+1} \neq T_s$, there is a $\rho \in T_s$ an $i < 2$ such that $T_{s+1} = T_s \cup \rho \cdot i$. Therefore:

$$\zeta_{s}(\rho) \nplus (\omega^{f_0(\rho \cdot i) + f_1(\rho \cdot i)} \ntimes 2) = \zeta_{s+1}(\rho) \nplus \omega^{f_0(\rho) \nplus f_1(\rho)}$$

But for every possible values of $\varphi_0$ and $\varphi_1$, we have $f_0(\rho) > f_0(\rho \cdot 0)$, $f_0(\rho) = f_0(\rho \cdot 1)$, $f_1(\rho) = f_1(\rho \cdot 0)$ and $f_1(\rho) > f_1(\rho \cdot 1)$. So for every possible value of $i$, we have 
$$\omega^{f_0(\rho) + f_1(\rho)} > \omega^{f_0(\rho \cdot i) + f_1(\rho \cdot i)} \times 2$$
and therefore $\zeta_{s+1}(\rho) < \zeta_s(\rho)$ which gives us $\zeta_{s+1} < \zeta_s$ (as $\rho \cdot i$ is the only difference between $T_s$ and $T_{s+1}$). \\

Therefore, for all $s \in M$, $\zeta_s \leq_{\epsilon_0} \zeta_0 \leq_{\epsilon_0} \omega^{\beta \times \omega}$. \\

This contradicts $\WF(\sup_{\beta < \alpha} \omega^{\beta \times \omega})$, so $T$ is $M$-finite and the result follows.
\end{proof}

\begin{remark}
If $(\sigma, \tau) \Vdash_i  \Phi_k(G_i,p-1)$ and $(\sigma, \tau) \Vdash_i \neg \Phi_k(G_i,p)$, then there exists a $\rho' \in 2^k$ such that  $(\sigma, \tau) \Vdash_i (G_i \oplus Y)' \uh k = \rho'$. Such a $\rho'$ can be found computably in $Y$. 

Moreover, if $(\sigma, \tau) \Vdash_i  \Psi_k(G_i,p-1)$ and $(\sigma, \tau) \Vdash_i \neg \Psi_k(G_i,p)$ then $(\sigma, \tau) \Vdash_i \Gamma_{s_0}^{G_i \oplus Y}(p) \uparrow$, so $(\sigma, \tau)$ forces $\Gamma_{s_0}^{G_i \oplus Y}$ to be partial. 
\end{remark}

\begin{remark}
Because of the disjunctive requirements, preservation of~$\WF(\epsilon_0)$ and first jump control must be combined. Thus, at first sight, $\WF(\epsilon_0)$ is used to control the first jump. However, if one wants to control only the first jump, no well-foundedness assumption is necessary. Indeed, let $U$ be the set of pairs $(p,q) \in k^2$ such that there is a pair $(\sigma_1, \tau_1)$ with $\sigma_1 \succeq \sigma$, $\tau_1 \succeq \tau$ and $\max_\L \sigma_1 < \min_\L \tau_1$ satisfying $\Psi_k(\sigma_1, p)$ and $\Psi_k(\tau_1, q)$. Then $U$ exists by bounded $\Sigma^0_1$ comprehension. Using this set~$U$, one can find an extension with the desired property. Therefore, one can prove that $\RCA_0 + \BSig_2 + \SADS$ is a $\Pi^1_1$-conservative extension of~$\RCA_0 + \BSig_2$, as already proved by Chong, Slaman and Yang~\cite{chong2021pi11} using the first construction.
\end{remark}
\smallskip

\textbf{Construction} We will build a decreasing sequence $(\sigma_s,\tau_s)$ of conditions and let $G_0$ and $G_1$ to be the union of the $\sigma_s$ and $\tau_s$. We will also build two increasing sequence $(\sigma_s')$ and $(\tau_s')$ such that $G_0'$ and $G_1'$ will be the union of the $\sigma_s'$ and $\tau_s'$. Initially we take $\sigma_0 = \sigma_0' = \tau_s = \tau_s' = \epsilon$, and during the construction we will ensure that we have $|\sigma_s|,|\sigma'_s|,|\tau_s|,|\tau_s'| \leq s$. 



Each stage of the construction will be of one of the following 5 types corresponding to the requirements: $\R \vee \R$, $\S \vee \R$, $\R \vee \S$, $\S \vee \S$ or of type $\mathcal{T} \wedge \mathcal{T}$. The stage $0$ is of type $\R \vee \R$.

Assume that $(\sigma_s, \tau_s)$, $\sigma'_s$ and $\tau_s'$ are already defined. Let $s_0 < s$ be the latest stage at which we switched the stage type. We have 5 cases.


Case 1: $s$ is of type $\R \vee \R$. If there exists some condition $(\sigma,\tau) \leq (\sigma_s,\tau_s)$ with $|\sigma|,|\tau| \leq s$ such that $(\sigma,\tau) \Vdash_0 (G_0 \oplus Y)' \uh_{s_0} = \sigma'$ for a $\sigma' \in 2^{s_0}$ or $(\sigma,\tau) \Vdash_1 (G_1 \oplus Y)' \uh_{s_0} = \tau'$ for a $\tau' \in 2^{s_0}$. Then let $\sigma_{s+1} = \sigma$, $\tau_{s+1} = \tau_s$, and if we are in the first case, let $\sigma'_{s+1} = \sigma'$ and $\tau'_{s+1} = \tau'_s$ and in the second case let $\sigma'_{s+1} = \sigma'_s$ and $\tau'_{s+1} = \tau'$. Then let $s+1$ be of type $\R \vee \S$. Otherwise, the elements are left unchanged and we go to the next stage.

Case 2: $s$ is of type $\R \vee \S$. If there exists some condition $(\sigma,\tau) \leq (\sigma_s,\tau_s)$ with $|\sigma|,|\tau| \leq s$ such that $(\sigma,\tau) \Vdash_0 (G_0 \oplus Y)' \uh_{s_0} = \sigma'$ for a $\sigma' \in 2^{s_0}$ or $(\sigma,\tau) \Vdash_1 \Gamma_{s_0}^{G_1 \oplus Y} partial$. Then let $\sigma_{s+1} = \sigma$, $\tau_{s+1} = \tau_s$, and if we are in the first case, let $\sigma'_{s+1} = \sigma'$ and $\tau'_{s+1} = \tau'_s$ and in the second case let $\sigma'_{s+1} = \sigma'_s$ and $\tau'_{s+1} = \tau'_s$. Then let $s+1$ be of type $\S \vee \R$. Otherwise, the elements are left unchanged and we go to the next stage.

Case 3: $s$ is of type $\S \vee \R$. If there exists some condition $(\sigma,\tau) \leq (\sigma_s,\tau_s)$ with $|\sigma|,|\tau| \leq s$ such that $(\sigma,\tau) \Vdash_0\Gamma_{s_0}^{G_0 \oplus Y} partial$ or $(\sigma,\tau) \Vdash_1 (G_1 \oplus Y)' \uh_{s_0} = \tau'$ for a $\tau' \in 2^{s_0}$. Then let $\sigma_{s+1} = \sigma$, $\tau_{s+1} = \tau_s$, and if we are in the first case, let $\sigma'_{s+1} = \sigma'_s$ and $\tau'_{s+1} = \tau'_s$ and in the second case let $\sigma'_{s+1} = \sigma'_s$ and $\tau'_{s+1} = \tau'$. Then let $s+1$ be of type $\S \vee \S$. Otherwise, the elements are left unchanged and we go to the next stage.

Case 4: $s$ is of type $\S \vee \S$. If there exists some condition $(\sigma,\tau) \leq (\sigma_s,\tau_s)$ with $|\sigma|,|\tau| \leq s$ such that $(\sigma,\tau) \Vdash_0\Gamma_{s_0}^{G_0 \oplus Y} partial$ or $(\sigma,\tau) \Vdash_1 \Gamma_{s_0}^{G_1 \oplus Y} partial$. Then let $\sigma_{s+1} = \sigma$, $\tau_{s+1} = \tau_s$, and if we are in the first case, let $\sigma'_{s+1} = \sigma'_s$ and $\tau'_{s+1} = \tau'_s$ and in the second case let $\sigma'_{s+1} = \sigma'_s$ and $\tau'_{s+1} = \tau_s'$. Then let $s+1$ be of type $\mathcal{T}$. Otherwise, the elements are left unchanged and we go to the next stage.

Case 5: $s$ is of type $\mathcal{T}$. If there exists some condition $(\sigma, \tau) \leq (\sigma_s, \tau_s)$ with $\card \sigma > \card \sigma_s$ and $\card \tau > \card \tau_s$, then let $\sigma_{s+1} = \sigma$, $\tau_{s+1} = \tau$, $\sigma'_{s+1} = \sigma'_s$ and $\tau'_{s+1} = \tau'_s$. Then let $s+1$ be of type $\R \vee \R$. Otherwise, the elements are left unchanged and we go to the next stage. \\ 

This completes the construction.
\bigskip

\textbf{Verification}. Since the size of $\sigma_s, \tau_s, \sigma'_s$ and $\tau_s$ are bounded by $s$, there is a $\Delta_2^0(Y)$-formula $\phi(s)$ stating that the construction can be pursued up to stage $s$. Our construction implies that the set $\{s|\phi(s)\}$ is a cut, so by $\IDel_2(Y)$, the construction can be pursued at every stage.

Let~$G_0 = \bigcup_{s \in M} \sigma_s$ and $G_1 = \bigcup_{s \in M} \tau_s$.
By \Cref{lem:sads-wf-forcing-wf-jump}, and the fact that $U$ and $V$ are $M$-infinite, each type of stage changes $M$-infinitely often. Thus, for $M$-infinitely many~$k$'s, the four requirements $\R^0_k \vee \R^1_k$, $\S^0_k \vee \R^1_k$, $\R^0_k \vee \S^1_k$ and $\S^0_k \vee \S^1_k$ are satisfied and $G_0$ and $G_1$ are $M$-infinite. 

So, for $M$-infinitely many~$k$'s, $(\mathcal{R}^0_k \wedge \S^0_k) \vee (\R^1_k \wedge \S^1_k)$ is satisfied which gives us an $i < 2$ such that $G_i$ is an ascending or descending sequence, $Y' \geq_T (G_i \oplus Y)'$ and therefore $\M[G_i] \models \BSig_2$ and $\M[G_i] \models \WF(\alpha)$. This completes the proof of \Cref{prop:sads-wf}.
\end{proof}

\begin{theorem}\label[theorem]{thm:sads-pi11-wfepsilon0}
$\RCA_0 + \BSig_2 + \WF(\epsilon_0) + \SADS$ is $\Pi_1^1$-conservative over $\RCA_0 + \BSig_2 + \WF(\epsilon_0)$.
\end{theorem}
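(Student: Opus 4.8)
The plan is to obtain the theorem from the abstract criterion \Cref{theorem:problem-wf-cons} together with \Cref{prop:sads-wf}, exactly as \Cref{thm:sem-pi11-wfepsilon0} was deduced from \Cref{cor:sem-wf}. I would apply \Cref{theorem:problem-wf-cons} with $\mathsf{P} = \SADS$ and $\alpha = \epsilon_0$, so that the task reduces to the following: given a countable topped model $\M = (M,S) \models \RCA_0 + \BSig_2 + \WF(\epsilon_0)$ and an $\SADS$-instance $L \in S$, produce an $\SADS$-solution $G$ of $L$ with $\M[G] \models \RCA_0 + \BSig_2 + \WF(\epsilon_0)$; the conservation statement then follows.

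For $L$ of order type $\omega + \omega^{*}$ this is precisely \Cref{prop:sads-wf} with $\alpha = \epsilon_0$, whose conclusion is exactly $\M[G] \models \RCA_0 + \BSig_2 + \WF(\epsilon_0)$. The only thing to verify is that the proof of \Cref{prop:sads-wf} goes through under $\WF(\epsilon_0)$ alone when $\alpha = \epsilon_0$, even though its statement requests $\forall k\, \WF(\omega^{\alpha \ntimes k})$. Inspecting that proof — through \Cref{lem:sads-wf-forcing-wf-jump} and the Shore-blocking functional $\Gamma_k$ of \Cref{lem:shore-blocking-wf} — one sees that for each fixed $k$ the only instance of well-foundedness actually used is that of an ordinal $\omega^{\beta \ntimes k}$, where $\beta$ is the largest ordinal of index below $k$. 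Since every ordinal notation in play denotes an ordinal below $\epsilon_0$, we have $\beta < \epsilon_0$, hence $\beta \ntimes k < \epsilon_0$ and $\omega^{\beta \ntimes k} < \epsilon_0$, so $\WF(\epsilon_0)$ suffices. This is the \emph{fixed point} of the preservation of well-foundedness mentioned at the start of the section: at $\alpha = \epsilon_0$, the hypothesis $\WF(\omega^{\alpha \ntimes \omega})$ asked for elsewhere is no more than $\WF(\epsilon_0)$ itself.

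It then remains to deal with the stable instances $L$ not of type $\omega + \omega^{*}$, i.e.\ those of type $\omega + k$ or $k + \omega^{*}$. In these cases an infinite ascending (resp.\ descending) sequence $G$ is produced by a routine construction requiring no well-foundedness hypothesis, and one checks directly that $\M[G]$ still models $\RCA_0 + \BSig_2 + \WF(\epsilon_0)$: the jump of $G \oplus Y$ stays computable from $Y'$, so $\BSig_2$ is preserved, and no new $\epsilon_0$-descending sequence is introduced. This part is standard and $\WF$-agnostic, and I would dispatch it briefly, or simply adopt the usual convention under which $\SADS$ is the $\omega + \omega^{*}$ statement.

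The genuine difficulty is entirely confined to \Cref{prop:sads-wf}: the disjunctive, $G_0$-or-$G_1$, shape of $\SADS$ forces one to interleave first-jump control with the preservation of $\WF(\epsilon_0)$, which is what makes the ordinal bookkeeping of \Cref{lem:sads-wf-forcing-wf-jump} — the weights $\zeta_s$ on the split-pair trees, together with the case analysis on whether each coordinate carries $\Phi_k$ or $\Psi_k$ — unavoidable. Given that proposition, the present theorem is the short deduction described above.
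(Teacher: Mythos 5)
Your proposal is correct and follows exactly the paper's route: the theorem is deduced by applying \Cref{theorem:problem-wf-cons} with $\alpha = \epsilon_0$ to \Cref{prop:sads-wf}, and your observation that at $\alpha = \epsilon_0$ the hypothesis $\forall k\,\WF(\omega^{\alpha \ntimes k})$ collapses to $\WF(\epsilon_0)$ (since only $\WF(\omega^{\beta \ntimes k})$ for $\beta < \epsilon_0$ is ever invoked) is precisely the fixed-point phenomenon the paper announces at the start of \Cref{sect:bsig2-epsilon0}. Your additional remark dispatching the computably true order types $\omega + k$ and $k + \omega^{*}$ matches the convention the paper adopts in its introduction.
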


\begin{proof}
Immediate by \Cref{prop:sads-wf} for the case $\alpha = \epsilon_0$ and \Cref{theorem:problem-wf-cons}.
\end{proof}

\section{$\Pi_1^1$-conservation of $\RT_2^2$}\label[section]{sect:conservation-rt22}

Thanks to the decomposition of \Cref{prop:rt22-sem-sads-coh}, we can combine the conservation theorems developed in \Cref{sect:bsig-wfalpha} and \Cref{sect:bsig2-epsilon0} to prove our main theorems. Yokoyama~\cite{yokoyama2010conservativity} proved the following amalgamation theorem:

\begin{theorem}[\cite{yokoyama2010conservativity}]
Let~$T_0, T_1, T_2$ be $\Pi^1_2$ theories such that $T_0 \supseteq \RCA_0$ and $T_1$ and $T_2$ are both $\Pi^1_1$-conservative extensions of~$T_0$. Then $T_1 + T_2$ is a $\Pi^1_1$-conservative extension of~$T_0$.
\end{theorem}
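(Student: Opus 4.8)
The plan is to reduce the amalgamation to the model-theoretic reformulation of $\Pi^1_1$-conservativity that is already the engine behind \Cref{sect:bsig-wfalpha}, read schematically: for $\Pi^1_2$-axiomatizable theories $T \supseteq T_0 \supseteq \RCA_0$, the theory $T$ is $\Pi^1_1$-conservative over $T_0$ if and only if every countable model of $T_0$ admits an $\omega$-extension (same first-order part, larger second-order part) satisfying $T$. I would first record the easy direction, which is exactly the argument of \Cref{theorem:problem-wf-cons}: if a countable $\M_0 \models T_0$ refutes a $\Pi^1_1$ sentence $\forall X\,\theta(X)$, say $\M_0 \models \neg\theta(A)$, and $\M_0$ $\omega$-extends to $\M \models T$, then $A \in \M$ and, $\theta$ being arithmetic, its truth on $A$ is absolute between $\M_0$ and $\M$, so $\M$ still refutes $\forall X\,\theta(X)$; combined with completeness and Löwenheim--Skolem this shows $T$ proves no $\Pi^1_1$ sentence unprovable in $T_0$. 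The converse direction is the substantial one and I would invoke it as known, due to Yokoyama~\cite{yokoyama2010conservativity}; its mechanism is indicated at the end.

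Granting the reformulation for $T_1$ and $T_2$ separately, the amalgamation is an alternation argument. Suppose $T_1 + T_2 \vdash \forall X\,\varphi(X)$ with $\varphi$ arithmetic, and suppose toward a contradiction that $T_0 \not\vdash \forall X\,\varphi(X)$; fix a countable $\M \models T_0 + \neg\varphi(A)$. Replacing $T_1$ and $T_2$ by $T_1 \cup T_0$ and $T_2 \cup T_0$ changes neither their deductive closures (both already prove $T_0$, being extensions of it) nor their being $\Pi^1_2$-axiomatizable, so we may assume $T_0 \subseteq T_1 \cap T_2$ as axiom sets. I would then build a chain of countable $\omega$-extensions $\M = \M_0 \subseteq \M_1 \subseteq \M_2 \subseteq \cdots$ in which $\M_{2k+1}$ is an $\omega$-extension of $\M_{2k}$ with $\M_{2k+1} \models T_1$ and $\M_{2k+2}$ is an $\omega$-extension of $\M_{2k+1}$ with $\M_{2k+2} \models T_2$; every step is legitimate since each $\M_i$ is a model of $T_0$ (any model of $T_1$ or of $T_2$ is one of $T_0$), so the reformulation applies at each stage.

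Let $\M_\omega = \bigcup_i \M_i$; its first-order part is that of $\M$ and its second-order part is the union, so it is an $\omega$-extension of $\M$. Each axiom of $T_1$ has the form $\forall X\exists Y\,\psi(X,Y)$ with $\psi$ arithmetic; given $X \in \M_\omega$ it lies in some $\M_{2k}$, has a solution $Y \in \M_{2k+1}$, and by absoluteness of arithmetic formulas $\M_\omega \models \exists Y\,\psi(X,Y)$. The same argument with the $\M_{2k+2}$'s gives $\M_\omega \models T_2$, and preservation of $\RCA_0$ along an $\omega$-chain is routine (the first-order part is fixed, and $\Delta^0_1$-comprehension and $\Sigma^0_1$-induction for formulas with parameters in $\M_\omega$ follow by absoluteness from their holding in some $\M_i$). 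Hence $\M_\omega \models T_1 + T_2 \vdash \forall X\,\varphi(X)$; but $A \in \M \subseteq \M_\omega$ and $\neg\varphi(A)$ is arithmetic, so $\M_\omega \models \neg\varphi(A)$ --- a contradiction. Therefore $T_0 \vdash \forall X\,\varphi(X)$, and $T_1 + T_2$ is $\Pi^1_1$-conservative over $T_0$.

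The main obstacle is the nontrivial direction of the reformulation: given a countable $\M_0 \models T_0$, a $\Pi^1_2$ axiom $\forall X\exists Y\,\psi(X,Y)$ of $T$, and an instance $X_0 \in \M_0$, one must produce an $\omega$-extension of $\M_0$ --- crucially with the \emph{same} first-order part --- that still models $T_0$ and contains a $Y$ with $\psi(X_0,Y)$, and then iterate over all instances with a bookkeeping. The consistency of the relevant expanded theory (the atomic diagram of $\M_0$ in a language with constants for its elements, together with $T$ and a Henkin constant witnessing $\psi(\dot X_0,\dot Y_0)$) is precisely what $\Pi^1_1$-conservativity delivers: an inconsistency would, after quantifying the Henkin constants out, yield a $\Pi^1_1$ theorem of $T$ refuting the atomic diagram of a model of $T_0$, contradicting conservativity. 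Pinning the first-order part down is then an omitting-types step, available since $\M_0$ is countable, or, in the concrete settings of this paper, is guaranteed by constructing the new second-order part as a union of forcing extensions of $\M_0$, exactly as in the proofs of \Cref{prop:coh-wf} and \Cref{prop:sads-wf}. I would cite Yokoyama~\cite{yokoyama2010conservativity} for this rather than reprove it.
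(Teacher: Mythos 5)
The paper does not actually prove this statement: it is imported wholesale from Yokoyama~\cite{yokoyama2010conservativity}, so there is no in-paper argument to compare against. Your reconstruction is the standard (and, as far as I know, Yokoyama's own) route: reduce $\Pi^1_1$-conservativity of a $\Pi^1_2$ theory $T \supseteq T_0$ to the statement that every countable model of $T_0$ has an $\omega$-extension satisfying $T$, and then amalgamate by an alternating chain whose union satisfies both theories by absoluteness of arithmetic formulas between $\omega$-extensions. The easy direction of the equivalence, the chain construction, and the verification that the union models $T_1 + T_2$ (including the preservation of $\RCA_0$ along the chain) are all correct as you present them; this is the same mechanism as \Cref{theorem:problem-wf-cons}.

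The one point to be careful about is that the entire content of the theorem is concentrated in the converse direction of your reformulation (conservativity implies the $\omega$-extension property for countable models), and your closing sketch of it does not constitute a proof. Consistency of $T$ together with the atomic diagram of $\M_0$ and a Henkin witness only yields \emph{some} model of $T$ into which $\M_0$ embeds; to get an $\omega$-extension one must omit the type of a first-order element different from every $c_a$, and showing that this type is omissible --- i.e.\ not supported by any formula consistent with the expanded theory --- is exactly where the work lies (it is also where the hypothesis that $\M_0$ already satisfies every $\Pi^1_1$ consequence of $T$ must be used; without it the claim is false, as $\ACA_0$ over $\RCA_0$ shows). Since you explicitly defer this lemma to \cite{yokoyama2010conservativity}, and the paper itself cites that source for the full theorem, the deferral is legitimate; just do not present the atomic-diagram-plus-omitting-types remark as if it settled the matter.
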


In particular, letting $T_0$ be $\RCA_0 + \BSig_2 + \WF(\epsilon_0)$, one can combine \Cref{thm:coh-pi11-wfepsilon0}, \Cref{thm:wkl-pi11-wfepsilon0}, \Cref{thm:sem-pi11-wfepsilon0} and \Cref{thm:sads-pi11-wfepsilon0} to deduce that $\RCA_0 + \RT^2_2 + \WKL + \WF(\epsilon_0)$ is a $\Pi^1_1$-conservative extension of~$\RCA_0 + \BSig_2 + \WF(\epsilon_0)$. We shall however provide a direct proof using $\omega$-extensions with the following proposition.

\begin{proposition}\label[proposition]{prop:rt22-wf}
Consider a countable model $\M = (M,S) \models \RCA_0 + \BSig_2 + \WF(\omega_4^{\alpha})$ topped by a set~$Y \in S$, where~$\alpha \in M$ is an ordinal~$\leq \epsilon_0$. Then, for every coloring~$f : [M]^2 \to 2$ in~$S$ and every set~$P \gg Y'$ such that $\M[P] \models \RCA_0^*$, there exists $G \subseteq M$ such that 
\begin{enumerate}
    \item $G$ is an $M$-infinite $f$-homogeneous set ;
    \item $P \geq_T (G \oplus Y)'$ ;
    \item $\M[G] \models \RCA_0 + \BSig_2 + \WF(\alpha)$.
\end{enumerate}
\end{proposition}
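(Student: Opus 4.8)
The plan is to realize the decomposition $\RT^2_2 \leftrightarrow \COH \wedge \SEM \wedge \SADS$ of \Cref{prop:rt22-sem-sads-coh} instance by instance, chaining \Cref{prop:coh-wf}, \Cref{prop:sem-wf} and \Cref{prop:sads-wf}. Given $f : [M]^2 \to 2$ in $S$, first form the sequence $\vec{R}$ with $R_m = \{ y : f(m,y) = 1 \}$ and apply \Cref{prop:coh-wf} with the ordinal $\omega_4^\alpha$ to obtain an $\vec{R}$-cohesive $C = \{ c_0 < c_1 < \dots \}$ with $\M[C] \models \RCA_0 + \BSig_2 + \WF(\omega_4^\alpha)$. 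By cohesiveness, $h(a,b) := f(c_a, c_b)$ is a stable coloring of $[M]^2$ in $\M[C]$, which we regard as a stable tournament. Next apply \Cref{prop:sem-wf} inside $\M[C]$: since $\WF(\omega_4^\alpha)$ proves $\WF(\omega^{(\omega_2^\alpha) \ntimes \omega})$ (one only needs the ordinal inequality $\omega^\alpha + 1 \leq \omega^{\omega^\alpha}$, trivial when $\alpha = \epsilon_0$), this gives an $h$-transitive $X = \{ x_0 < x_1 < \dots \}$ with $\M[C][X] \models \RCA_0 + \BSig_2 + \WF(\omega_2^\alpha)$, and $h'(a,b) := h(x_a, x_b)$ is transitive and stable, hence corresponds in $\M[C][X]$ to a stable linear order $L$, of type $\omega + \omega^{*}$, $\omega + k$ or $k + \omega^{*}$. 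In the latter two cases the required infinite monotone sequence $D$ is already in $\M[C][X]$; in the first case apply \Cref{prop:sads-wf} inside $\M[C][X]$ (using that $\WF(\omega_2^\alpha)$ proves $\forall k\,\WF(\omega^{\alpha \ntimes k})$), producing such a $D$ with $\M[C][X][D] \models \RCA_0 + \BSig_2 + \WF(\alpha)$. Finally, with $D = \{ d_0 < d_1 < \dots \}$, put $G = \{ c_{x_{d_i}} : i \in M \}$: it is $M$-infinite and $f$-homogeneous, it lies in $\M[C][X][D]$, so $\M[G]$ is an $\omega$-submodel of that model and $\M[G] \models \RCA_0 + \BSig_2 + \WF(\alpha)$. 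The well-foundedness budget drops $\omega_4^\alpha$ (through $\COH$, costless, as is $\WKL$), then to $\omega_2^\alpha$ (through $\SEM$), then to $\alpha$ (through $\SADS$), the two nontrivial drops $\beta \mapsto \omega^{\beta \ntimes \omega}$ being exactly those in the hypotheses of \Cref{prop:sem-wf} and \Cref{prop:sads-wf}; this is why $\WF(\omega_4^\alpha)$ is the right hypothesis.

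It remains to secure $(G \oplus Y)' \leq_T P$, and here naive chaining is not quite enough: applying \Cref{prop:sem-wf} inside $\M[C]$ asks for a set PA over $(C \oplus Y)'$, whereas we only have $P$ PA over $Y'$ with $(C \oplus Y)' \leq_T P$, and a cohesive $C$ has $(C \oplus Y)'$ strictly above $Y'$, so $P$ need not be PA over it. The remedy is to not invoke $\COH$ and $\SEM$ as separate black boxes but to follow the internal architecture of the proof of \Cref{prop:sem-wf}: one use of \Cref{prop:wkl-wf} builds a universal tree-functional gadget $\vec{X}$ with $(\vec{X} \oplus Y)' \leq_T Y'$, so that $P$ remains PA over $(\vec{X} \oplus Y)'$; one subsequent use of \Cref{prop:coh-wf} against the given $P$ builds a $C$ cohesive for the concatenation of $\vec{X}$ with $\vec{R}$, so that a single $C$ both stabilizes $f$ and serves as the cohesive set consumed internally by the $\SEM$-forcing, with $(C \oplus \vec{X} \oplus Y)' \leq_T P$; then the $\SEM$-forcing of \Cref{prop:sem-wf} (which keeps the jump $\leq_T P$) and the $\SADS$-forcing of \Cref{prop:sads-wf} (which is low over its base) are run in turn, so $(G \oplus Y)' \leq_T P$ holds throughout. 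Thus there is a single $P$-driven step, the merged cohesiveness step, and everything downstream is either below $P$ or low over a set whose jump is below $P$.

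The main obstacle I expect is exactly this jump bookkeeping across the $\COH$/$\SEM$ junction — arranging that one cohesiveness step simultaneously stabilizes $f$ and feeds the $\SEM$ argument — rather than any new combinatorics, since the hard forcing lemmas already live inside \Cref{prop:sem-wf} and \Cref{prop:sads-wf}. Routine remaining checks are that the order type of $L$ is correctly identified inside $\M[C][X]$, that the three composed sets stay $M$-infinite, and that $\M[G] \models \RCA_0$ (via $\M \cup \{G\} \models \ISig_1$, which holds because $G$ belongs to $\M[C][X][D] \models \RCA_0$).
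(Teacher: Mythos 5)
Your first paragraph matches the paper's strategy (chain \Cref{prop:coh-wf}, \Cref{prop:sem-wf}, \Cref{prop:sads-wf}; the ordinal budget $\omega_4^\alpha \to \omega_2^\alpha \to \alpha$ checks out), and you have correctly located the crux: after the cohesiveness step one only has $(C \oplus Y)' \leq_T P$, not $P \gg (C \oplus Y)'$, so \Cref{prop:sem-wf} cannot be invoked as a black box. But your proposed repair does not go through. If you build $\vec{X}$ first (with oracle $Y$) and then take a single $C$ cohesive for $\vec{X}$ together with $\vec{R}$, the tournament fed to the $\SEM$-forcing is $f$, which is stable only along $C$ and not on the reservoirs $X_a$. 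Stability on the reservoirs is exactly what \Cref{lem:sem-wf-valid-leaf-force-positive} uses: from $\rho_s \subseteq A_{i_s}(R)$ and $\BSig_2$ one thins $X_a$ to a final segment $X_b$ lying in a minimal interval of $\pi(\nu)$; if the limits defining $A_i(R)$ are taken only along $C$, one can only place $X_b \cap C$ in that interval. Restricting the reservoirs (and hence the blocks $\rho$ of the $(\sigma,a)$-trees) to $C$ turns the partition classes of \Cref{lem:sem-wf-finite-tree-forcing} into $\Pi^0_1(C \oplus Y \oplus X_b)$ classes, which the gadget $\vec{X}$ --- built before $C$ from $Y$-primitive recursive tree functionals --- cannot select members of; and building $\vec{X}$ after $C$ destroys the $\vec{X}$-cohesiveness of $C$. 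The merged step is therefore circular, and you are back to needing a set of PA degree over $(C \oplus Y)'$.

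The paper's resolution is simpler and keeps all three propositions as black boxes: by \cite[Lemma 3.2]{fiori2021isomorphism} there is a $P$-computable coded model of $\WKL_0^*$ containing $Y'$, hence an intermediate set $P_0$ with $P \gg P_0 \gg Y'$. One applies \Cref{prop:coh-wf} with $P_0$ rather than $P$, so that $(G_0 \oplus Y)' \leq_T P_0$ and therefore $P \gg (G_0 \oplus Y)'$; then \Cref{prop:sem-wf} applies verbatim to $\M[G_0]$ with the same $P$, and \Cref{prop:sads-wf} finishes. This interpolation of an intermediate PA degree is the one idea missing from your write-up; with it, your first paragraph is essentially the paper's proof.
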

\begin{proof}
By Fiori-Carones et al~\cite[Lemma 3.2]{fiori2021isomorphism}, there exists a $P$-computable countable coded model of~$\WKL_0^*$ containing~$Y'$. In particular, there exists a set~$P_0 \subseteq M$ such that $P \gg P_0 \gg Y'$.
Let~$\vec{R} = R_0, R_1, \dots$ be defined by $R_x = \{ y : f(x, y) = 1 \}$.
By \Cref{prop:coh-wf} applied to~$\M$ and $P_0$, there exists some $M$-infinite $\vec{R}$-cohesive set~$G_0 \subseteq M$ such that $P_0 \geq_T (G_0 \oplus Y)'$ and $\M[G_0] \models \RCA_0 + \BSig_2 + \WF(\omega_3^{\alpha})$. In particular, $f : [G_0]^2 \to 2$ is stable. One can see~$f : [G_0]^2 \to 2$ as an instance of~$\SEM$.
By \Cref{prop:sem-wf} applied to~$\M[G_0]$ and $P$, there exists some~$M$-infinite $f$-transitive set~$G_1 \subseteq G_0$ such that $P \geq_T (G_1 \oplus G_0 \oplus Y)'$ and $\M[G_1 \oplus G_0] \models \RCA_0 + \BSig_2 + \WF(\omega_2^\alpha)$. One can see $f : [G_1]^2 \to 2$ as an instance of~$\SADS$.
By \Cref{prop:sads-wf} applied to~$\M[G_0 \oplus G_1]$ and $P$, there exists some~$M$-infinite $f$-homogeneous set~$G \subseteq G_1$ such that $P \geq_T (G \oplus G_1 \oplus G_0 \oplus Y)'$ and $\M[G \oplus G_1 \oplus G_0] \models \RCA_0 + \BSig_2 + \WF(\alpha)$.
\end{proof}


\begin{corollary}\label[corollary]{cor:rt22-wf}
Fix~$n \geq 2$. Let~$T_n$ be either~$\BSig_n$ or~$\ISig_n$.
Consider a countable model $\M = (M,S) \models \RCA_0 + T_n + \WF(\omega_4^{\alpha})$ topped by a set~$Y \in S$, where~$\alpha \in M$ is an ordinal~$\leq \epsilon_0$. Then, for every coloring~$f : [M]^2 \to 2$ in~$S$, there exists $G \subseteq M$ such that 
\begin{enumerate}
    \item $G$ is an $M$-infinite $f$-homogeneous set ;
    \item $\M[G] \models \RCA_0 + T_n + \WF(\alpha)$.
\end{enumerate}
\end{corollary}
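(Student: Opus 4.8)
The plan is to follow the proof of \Cref{prop:rt22-wf} line by line, with two changes: the auxiliary set~$P$ is produced once at the start (exactly as in \Cref{cor:coh-wf} and \Cref{cor:sem-wf}, which absorb the ``$\exists P$''), and the conclusion ``$\BSig_2$'' of the three ingredients \Cref{prop:coh-wf}, \Cref{prop:sem-wf} and \Cref{prop:sads-wf} is strengthened to ``$T_n$'' throughout.

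First I would note that for $n \geq 2$ we have $\RCA_0 + T_n \vdash \BSig_2$, so $\M \models \RCA_0 + \BSig_2 + \WF(\omega_4^\alpha)$ and the three constructions are available. Since $\M \models \RCA_0 + T_n \supseteq \RCA_0^*$, one invokes \Cref{thm:wkl-rca0s} in the form preserving the first-order part of the ground model --- i.e.\ the $\Sigma^0_n$-collection or $\Sigma^0_n$-induction scheme of $\M$, this being the conservation of $\WKL$ over first-order theories extending $\RCA_0$ (see \cite{fiori2021isomorphism,Blanger2022ConservationTF}) --- together with \cite[Lemma 3.2]{fiori2021isomorphism}, to obtain a set $P \subseteq M$ with $\M[P] \models \RCA_0 + T_n + \WKL$ (so in particular $\M[P] \models \RCA_0^*$) and a set $P_0$ with $P \gg P_0 \gg Y'$ and $\M[P_0] \models \RCA_0^*$. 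One then runs the proof of \Cref{prop:rt22-wf}: \Cref{prop:coh-wf} applied to $(\M, P_0)$ yields an $M$-infinite $\vec R$-cohesive $G_0$ with $P_0 \geq_T (G_0 \oplus Y)'$; \Cref{prop:sem-wf} applied to $(\M[G_0], P)$ yields an $M$-infinite $f$-transitive $G_1 \subseteq G_0$ with $P \geq_T (G_1 \oplus G_0 \oplus Y)'$; and \Cref{prop:sads-wf} applied to $(\M[G_0 \oplus G_1], P)$ yields an $M$-infinite $f$-homogeneous $G \subseteq G_1$ with $P \geq_T (G \oplus G_1 \oplus G_0 \oplus Y)'$.

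The new ingredient is that each stage in fact preserves $T_n$, not merely $\BSig_2$. Write $\M^{*} := \M[G_0 \oplus G_1 \oplus G]$ and $W := G \oplus G_1 \oplus G_0 \oplus Y$. Since $\M$ is topped by $Y$, every set in the second-order part of $\M^{*}$ is $\leq_T W$, and $W' \leq_T P$; hence every $\Sigma^0_n$ formula with parameters from $\M^{*}$ is, provably over $\M$, equivalent to a $\Sigma^0_{n-1}(W')$ formula, and therefore to a $\Sigma^0_{n-1}(P)$ formula. Consequently $\BSig_n$ --- resp.\ $\ISig_n$ --- relative to parameters of $\M^{*}$ follows from $\BSig_{n-1}(P)$ --- resp.\ $\ISig_{n-1}(P)$ --- and both are consequences of $\M[P] \models T_n$ (using $\BSig_n \to \BSig_{n-1}$, $\ISig_n \to \BSig_{n-1}$ and $\ISig_n \to \ISig_{n-1}$). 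Together with $\M^{*} \models \RCA_0$, established exactly as in the three propositions, this gives $\M^{*} \models \RCA_0 + T_n$; the submodel $\M[G] \subseteq \M^{*}$ then inherits $\RCA_0 + T_n$ by downward absoluteness of the induction and collection schemes. Finally, the preservation of well-foundedness is unchanged from \Cref{prop:rt22-wf}: the three stages carry $\WF(\omega_4^\alpha)$ through $\COH$, $\SEM$ and $\SADS$ down to $\WF(\alpha)$ --- using $\WF(\omega_4^\alpha) \Rightarrow \WF(\omega^{\omega_2^\alpha \ntimes \omega})$ for the $\SEM$ step and $\WF(\omega_2^\alpha) \Rightarrow \forall k\, \WF(\omega^{\alpha \ntimes k})$ for the $\SADS$ step --- and $\M[G] \subseteq \M^{*}$ inherits $\WF(\alpha)$.

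The step I expect to be the real obstacle is the refined form of \Cref{thm:wkl-rca0s} invoked in the second paragraph: one must know that the $\WKL^{*}$-model supplied by the Simpson--Smith machinery can simultaneously carry the full first-order strength $T_n$ of $\M$ and contain, via \cite[Lemma 3.2]{fiori2021isomorphism}, a set that is PA over $Y'$, so that \Cref{prop:coh-wf}, \Cref{prop:sem-wf} and \Cref{prop:sads-wf} apply verbatim. This is the conservation of $\WKL_0$ (resp.\ $\WKL_0^{*}$) over first-order extensions of $\RCA_0$ (resp.\ $\RCA_0^{*}$) and is standard, but it is the step that carries the load; granting it, the jump-calculus of the third paragraph and the ordinal bookkeeping are routine.
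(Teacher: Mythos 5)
Your overall strategy is the paper's: run \Cref{prop:rt22-wf} (or its proof) and upgrade the conclusion from $\BSig_2$ to $T_n$ by observing that $\Sigma^0_n$ formulas over the extended model are $\Sigma^0_{n-1}$ over $\M[P]$; your final jump-calculus paragraph is exactly the paper's closing argument. However, the step you yourself flag as the obstacle is wrong as stated: one cannot in general obtain $P \gg Y'$ with $\M[P] \models \RCA_0 + T_n + \WKL$. Since $P \gg Y'$ entails $P \geq_T Y'$, the set $Y'$ lies in $\M[P]$, and $\Sigma^0_n(Y')$ formulas are $\Sigma^0_{n+1}(Y)$; so $\M[P] \models \ISig_n$ would force $\M \models \ISig_{n+1}$ (and likewise $\M[P] \models \BSig_n$ would force $\M \models \BSig_{n+1}$), which need not hold. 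There is a genuine trade-off between making $P$ of PA degree over $Y'$ and retaining the full induction strength of $\M$ in $\M[P]$, and your formulation ignores it.

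The repair is exactly what the paper does, and it is also all that your third paragraph actually uses: since $\M \models T_n$, the jump model $\M[Y']$ satisfies $\RCA_0^* + T_{n-1}$ (a $\Sigma^0_{n-1}(Y')$ formula is $\Sigma^0_n(Y)$), and the Harrington/Simpson--Smith/H\'ajek extension theorem applied at level $T_{n-1}$ to $\M[Y']$ yields $P \gg Y'$ with $\M[P] \models \RCA_0^* + T_{n-1}$ --- one level lower than you claim, but precisely the $\BSig_{n-1}(P)$, resp.\ $\ISig_{n-1}(P)$, that you invoke to conclude $T_n$ in $\M[G]$. With that correction the remainder of your argument (the chaining of \Cref{prop:coh-wf}, \Cref{prop:sem-wf} and \Cref{prop:sads-wf}, the descent from $\M[G_0 \oplus G_1 \oplus G]$ to its $\omega$-submodel $\M[G]$, and the ordinal bookkeeping inherited from \Cref{prop:rt22-wf}) goes through.
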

\begin{proof}
By Harrington; Simpson–Smith; Hajek (see Belanger~\cite{Blanger2022ConservationTF}), every countable topped model of $\RCA_0^* + T_{n-1}$ can be $\omega$-extended into a model of~$\RCA_0^* + T_{n-1} + \WKL$. Since~$\M[Y'] \models \RCA_0^* + T_{n-1}$, there is some $P \subseteq M$ such that $P \gg Y'$ and $\M[P] \models \RCA_0^* + T_{n-1}$. By \Cref{prop:rt22-wf}, there exists $G \subseteq M$ such that 
\begin{enumerate}
    \item $G$ is an $M$-infinite $f$-homogeneous set ;
    \item $P \geq_T (G \oplus Y)'$ ;
    \item $\M[G] \models \RCA_0 + \BSig_2 + \WF(\alpha)$.
\end{enumerate}
Since every $\Sigma^0_n$ formula in~$\M[G]$ is $\Sigma^0_{n-1}$ in~$\M[P]$
and $\M[P] \models T_{n-1}$, $\M[G] \models T_n$.
\end{proof}



\begin{repmaintheorem}{main:theorem1}
Fix~$n \geq 2$. Let~$T_n$ be either~$\BSig_n$ or~$\ISig_n$.
$\RCA_0 + T_n + \RT^2_2 + \WKL + \WF(\epsilon_0)$ is a $\Pi^1_1$-conservative extension of $\RCA_0 + T_n + \WF(\epsilon_0)$.
\end{repmaintheorem}

\begin{proof}
From \Cref{cor:rt22-wf}, \Cref{prop:wkl-wf} and \Cref{theorem:problem-wf-cons}, we get the $\Pi_1^1$-conservativity of $\WKL + \RT^2_2$ over $\BSig_2 + \WF(\epsilon_0)$.
\end{proof}

\subsection{Preserving $\bigcup_n \WF(\omega_n)$}\label[section]{sect:bigcup-wf-omegan}

In their proof that $\RT^2_2$ does not imply~$\ISig_2$ over~$\RCA_0$, Chong, Slaman and Yang~\cite{chong2017inductive} started with a model satisfying a slightly weaker well-foundedness axiom than $\WF(\epsilon_0)$, namely, $\bigcup_n \WF(\omega_n)$. We show that our assumption can also be weakened to $\bigcup_k \WF(\omega^\omega_k)$ with a little bit of extra work.

\begin{theorem}\label[theorem]{theorem:problem-wf-cons-2}
Fix~$n \geq 2$. Let~$T_n$ be either~$\BSig_n$ or~$\ISig_n$.
Let $\mathsf{P}$ be a $\Pi_2^1$ problem such that for every countable topped model $\M = (M,S) \models \RCA_0 + T_n + \WF(\omega_k^{\alpha})$ with $\alpha \in M$ an ordinal less than $\epsilon_0$ and $k$ a standard integer and every instance of $\mathsf{P}$ in $\M$ there exists a solution $G \subset M$ of that instance such that $\M[G] \models \RCA_0 + T_n + \WF(\alpha)$. Then $\mathsf{P}$ is $\Pi_1^1$-conservative over $\RCA_0 + T_n + \bigcup_{k \in \omega} \WF(\omega^\omega_k)$.   
\end{theorem}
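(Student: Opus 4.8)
The plan is to reproduce the argument of \Cref{theorem:problem-wf-cons}, prefaced by one preliminary manoeuvre: before iterating the hypothesis we replace the starting model by one in which $\WF(\omega^\omega_{k^\ast})$ holds for some \emph{nonstandard} index $k^\ast$. Concretely, suppose $\RCA_0 + T_n + \bigcup_k \WF(\omega^\omega_k) + \mathsf{P} \vdash \forall X\,\phi(X)$ for an arithmetic formula $\phi$, and assume toward a contradiction that $\RCA_0 + T_n + \bigcup_k \WF(\omega^\omega_k) \not\vdash \forall X\,\phi(X)$. Fix a countable $\M \models \RCA_0 + T_n + \bigcup_k \WF(\omega^\omega_k) + \neg\phi(A)$; passing to the $\omega$-submodel whose second-order part consists of the $\Delta^0_1(A)$ sets, we may assume $\M$ is topped by $Y := A$.

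The crucial point is that, because $\M$ is topped by $Y$, the assertion \qt{$\WF(\omega^\omega_x)$ holds} is expressed over $\M$ by the arithmetic formula (with parameter $Y$) $\theta(x) :\equiv \forall e\,(\Phi^Y_e \text{ is not an infinite }\omega^\omega_x\text{-decreasing sequence})$, since every set of $\M$ is $\Phi^Y_e$ for some $e$. Hence $(M,Y) \models \theta(\underline{k})$ for every standard $k$, where $(M,Y)$ is regarded as a first-order structure in the language of arithmetic with an oracle for $Y$. A routine compactness argument — adjoining a fresh constant $c$ together with the sentences $\theta(c)$ and $c \neq \underline{k}$ for each standard $k$ — then produces a countable elementary extension $(M',Y') \equiv (M,Y)$ containing a nonstandard $k^\ast$ with $(M',Y') \models \theta(k^\ast)$. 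Setting $\M' := (M',\{\,Z \subseteq M' : Z \in \Delta^0_1(Y')\,\})$, one checks that, for topped models, membership in $\RCA_0$, in $T_n$, in each $\WF(\omega^\omega_{\underline{k}})$, and the truth of $\neg\phi(Y')$ are all witnessed by first-order properties of $(M',Y')$; as these transfer along $(M',Y') \equiv (M,Y)$, the model $\M'$ is a countable topped model of $\RCA_0 + T_n + \bigcup_k \WF(\omega^\omega_k) + \neg\phi(Y')$ that moreover satisfies $\WF(\omega^\omega_{k^\ast})$.

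Next we run the chain construction of \Cref{theorem:problem-wf-cons} starting from $\Nc_0 := \M'$, maintaining the invariant that $\Nc_i$ is a countable topped model of $\RCA_0 + T_n + \WF(\omega^\omega_{k^\ast - ik})$, where $k$ is the standard integer supplied by the hypothesis. At stage $i$ we apply that hypothesis to $\Nc_i$ with $\alpha := \omega^\omega_{k^\ast - (i+1)k} \in M'$, an ordinal below $\epsilon_0$ with $\omega^\alpha_k = \omega^\omega_{k^\ast - ik}$, obtaining a solution $G_i$ to the $i$-th pending $\mathsf{P}$-instance with $\Nc_{i+1} := \Nc_i[G_i] \models \RCA_0 + T_n + \WF(\omega^\omega_{k^\ast-(i+1)k})$; a standard bookkeeping ensures every $\mathsf{P}$-instance occurring in some $\Nc_j$ is eventually handled. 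For each standard $i$ the index $k^\ast - ik$ is again nonstandard, hence exceeds every standard $j$, so $\RCA_0 \vdash \WF(\omega^\omega_{k^\ast - ik}) \to \WF(\omega^\omega_{\underline{j}})$ forces $\Nc_i \models \bigcup_j \WF(\omega^\omega_j)$. Letting $\Nc^\ast := \bigcup_i \Nc_i$: every $\mathsf{P}$-instance of $\Nc^\ast$ lies in some $\Nc_i$ and receives a solution, and any infinite $\omega^\omega_j$-decreasing sequence in $\Nc^\ast$ would already lie in some $\Nc_i$, contradicting $\Nc_i \models \WF(\omega^\omega_j)$. Thus $\Nc^\ast \models \RCA_0 + T_n + \bigcup_j \WF(\omega^\omega_j) + \mathsf{P} + \neg\phi(Y')$, contradicting $\RCA_0 + T_n + \bigcup_k \WF(\omega^\omega_k) + \mathsf{P} \vdash \forall X\,\phi(X)$.

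The main obstacle — and the source of the \qt{little bit of extra work} relative to \Cref{theorem:problem-wf-cons} — is precisely this first manoeuvre. A naive iteration of the hypothesis loses at least $k$ levels of the well-foundedness hierarchy each time an instance is handled, so over an $\omega$-length chain it would consume the entire scheme $\bigcup_k \WF(\omega^\omega_k)$ and leave nothing to preserve; one cannot simply apply the hypothesis step by step. The repair is that \qt{$\WF(\omega^\omega_x)$} becomes a genuinely arithmetic property once the model is topped, so an overspill/compactness argument supplies a nonstandard index $k^\ast$, after which the $\omega$-chain only ever descends by standardly many levels and every intermediate model keeps satisfying the full scheme. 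I expect the one point requiring care in a full write-up to be the verification that all the relevant model-theoretic conditions — being a model of $\RCA_0 + T_n$, and well-foundedness restricted to the sets of a topped model — are correctly first-order relative to the top, so that they genuinely transfer along the elementary extension.
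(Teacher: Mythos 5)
Your proof is correct and follows essentially the same strategy as the paper's: use compactness to obtain a countable topped model satisfying $\WF(\omega^\omega_{k^\ast})$ for a nonstandard index $k^\ast$ together with $\neg\phi(A)$, then iterate the hypothesis along an $\omega$-chain, losing only standardly many levels of the hierarchy at each step so that every intermediate model still satisfies the full scheme $\bigcup_k \WF(\omega^\omega_k)$. The only cosmetic difference is that the paper obtains the nonstandard index by applying compactness directly to the theory $\RCA_0 + T_n + \WF(\omega_c) + \{c > k\}_{k \in \omega} + \exists X\,\neg\phi(X)$, whereas you apply it to the elementary diagram of a fixed topped model, exploiting that well-foundedness restricted to a topped model is arithmetic in the top set; both routes produce the same starting point for the iteration.
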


\begin{proof}
Assume $\RCA_0 + T_n + \bigcup_{n \in \omega} \WF(\omega_n) \not \vdash \forall X \phi(X)$ for $\phi$ arithmetical.  

Extend the language of second order arithmetics with a first-order constant symbol $c$. The theory $\RCA_0 + T_n + \WF(\omega_c) + \bigcup_{k \in \omega} c > k + \exists X \neg \phi(X)$ is finitely satisfiable by our assumption, so by compactness we have a model $\M \models \RCA_0 + T_n + \WF(\omega_c) + \bigcup_{k \in \omega} c > k + \exists X \neg \phi(X)$. Such a $c$ will therefore be non-standard.

From our assumption, for any countable topped model $\mathcal{N} = (N,S) \models \RCA_0 + T_n + \WF(\omega_{d+k})$ and instance of $\mathsf{P}$ in this model, we can find a countable topped $\omega$-extension that will be a model of $\RCA_0 + T_n + \WF(\omega_{d})$ and will contain a solution to that instance.

So by iterating that reasoning, we can build a sequence  $\M_0 = \M \subseteq \M_1 \subseteq \dots$ of countable topped model such that $\M_s \models \RCA_0 + T_n + \WF(\omega_{c - ks})$ and such that every instance of $P$ appearing inside one $\M_i$ will eventually have a solution in one $\M_j$. Since $c$ is non-standard and every such $s$ is standard, every $\M_i$ is a model of $\bigcup_{k \in \omega} \WF(\omega^\omega_k)$, and the model $\bigcup_{s \in \omega} \M_s$ will therefore be a model of $\RCA_0 + T_n + \bigcup_{k \in \omega} \WF(\omega^\omega_k) + \exists X \neg \phi(X) + \mathsf{P}$.

So $\RCA_0 + T_n + \bigcup_{k \in \omega} \WF(\omega^\omega_k)  + \mathsf{P} \not \vdash \forall X \phi(X)$.
\end{proof}

\begin{repmaintheorem}{main:theorem2}
Fix~$n \geq 2$. Let~$T_n$ be either~$\BSig_n$ or~$\ISig_n$.
$\RCA_0 + T_n + \bigcup_{k \in \omega} \WF(\omega^\omega_k) + \RT_2^2 + \WKL$ is $\Pi_1^1$-conservative over $\RCA_0 + T_n + \bigcup_{k \in \omega} \WF(\omega^\omega_k)$.   
\end{repmaintheorem}

\begin{proof}
From \Cref{cor:rt22-wf}, \Cref{prop:wkl-wf} and \Cref{theorem:problem-wf-cons-2}, we get the $\Pi_1^1$-conservativity of $\RCA_0 + T_n + \RT^2_2 + \WKL$ over $\RCA_0 + T_n + \bigcup_{k \in \omega} \WF(\omega^\omega_k)$. 
\end{proof}

\begin{remark}
The combination of \Cref{prop:coh-wf}, \Cref{prop:sem-wf} and  \Cref{prop:sads-wf} provides the $\ll_2$-basis theorem for $\RT^2_2$ while preserving $\WF(\epsilon_0)$ (see \cite[Definition 4.9]{fiori2021isomorphism}).
This implies that there exists a polynomial proof transformation for the $\Pi^1_1$-conservation for \Cref{main:theorem1} (it will appear in the forthcoming thesis of H. Ikari).
On the other hand, it is not clear whether there exists a polynomial proof transformation for \Cref{main:theorem2}.
\end{remark}

\subsection{Preserving $\ISig_n$ and $\BSig_{n+1}$ for~$n \geq 2$}

One can use the previous theorems to reprove a known conservation result over~$\ISig_2$.

\begin{proposition}\label[proposition]{prop:rt22-sig2}
Fix~$n \geq 2$ and let $T_n$ be either $\ISig_n$, or $\BSig_{n+1}$.
Consider a countable model $\M = (M,S) \models \RCA_0 + T_{n-1}$ topped by a set~$Y \in S$. Then, for every coloring~$f : [M]^2 \to 2$ in~$S$ and every set~$P \gg Y'$ such that $\M[P] \models \RCA_0$, there exists $G \subseteq M$ such that 
\begin{enumerate}
    \item $G$ is an $M$-infinite $f$-homogeneous set ;
    \item $P \geq_T (G \oplus Y)'$ ;
    \item $\M[G] \models \RCA_0 + T_n$.
\end{enumerate}
\end{proposition}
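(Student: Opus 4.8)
The plan is to rerun the proof of \Cref{prop:rt22-wf}: decompose $f$ through \Cref{prop:rt22-sem-sads-coh} into a cohesiveness step, a transitivity step and an ascending/descending step, chaining the corresponding forcing constructions of Sections~\ref{sect:bsig-wfalpha} and \ref{sect:bsig2-epsilon0} while keeping the first jump of each partial solution reducible to $P$. The difference is that no preservation of well-foundedness of large ordinals is needed any more; instead, $T_{n-1}$ holding in $\M[P]$ will be upgraded to $T_n$ in $\M[G]$ purely through the control of the first jump. The one place where well-foundedness is still genuinely used is the transitivity ($\SEM$) step, and there only $\BME_\star = \WF(\omega^\omega)$ is needed, which is provided by $T_{n-1}$ for $n \geq 2$; the ascending/descending step needs no well-foundedness at all, as the remark following \Cref{lem:sads-wf-forcing-wf-jump} already points out.

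Concretely: from $\M \models \RCA_0 + T_{n-1}$ topped by $Y$ and $P \gg Y'$, first extract $P_0 \subseteq M$ with $P \gg P_0 \gg Y'$ coding a countable model of $\WKL_0^*$ over $Y'$, by the argument in the proof of \Cref{prop:rt22-wf}. Apply \Cref{prop:coh-wf} to $\M$ with controller $P_0$ and ordinal $\alpha = \omega^\omega$, getting an $M$-infinite $\vec R$-cohesive $G_0$ with $P_0 \geq_T (G_0 \oplus Y)'$ and $\M[G_0] \models \RCA_0 + \BSig_2 + \WF(\omega^\omega)$; thus $f$ is stable on $G_0$. Inside $\M[G_0]$, run the transitivity construction behind \Cref{prop:sem-wf} with controller $P$, but delete the $\mathcal S$-type requirements from the requirement list and from the stage schedule: the combinatorial core (minimal intervals, valid $(\sigma,a)$-nodes, universal $(\sigma,a)$-trees, maximal monotone enumerations) is unchanged, \Cref{lem:sem-wf-forcing-jump} still goes through since it only invokes $\BME_\star$, and the verification runs by $\IDel_1(P)$ exactly as before; this yields an $f$-transitive $G_1 \subseteq G_0$ with $P \geq_T (G_1 \oplus G_0 \oplus Y)'$. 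Finally, inside $\M[G_0 \oplus G_1]$, run the split-pair construction behind \Cref{prop:sads-wf} with the $\mathcal S$-requirements likewise removed, so that no well-foundedness is required, obtaining an $f$-homogeneous $G \subseteq G_1$ with $P \geq_T (G \oplus G_1 \oplus G_0 \oplus Y)'$.

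It remains to check $\M[G] \models \RCA_0 + T_n$. Since $\M$ is topped by $Y \leq_T P$ and $G, G_0, G_1$ are $\Delta^0_1(P)$ with $(G \oplus Y)' \leq_T P$, every $\Sigma^0_n$ formula with parameters from $\M[G]$ is, uniformly, a $\Sigma^0_{n-1}$ formula with parameters from $\M[P]$; hence each instance of $\ISig_n$ (resp.\ of $\BSig_{n+1}$) over $\M[G]$ reduces to an instance of $\ISig_{n-1}$ (resp.\ of $\BSig_n$) over $\M[P]$, and these hold once $\M[P] \models T_{n-1}$, which is arranged by choosing $P$ to $\omega$-extend $\M[Y'] \models \RCA_0^* + T_{n-1}$ into a model of $\RCA_0^* + T_{n-1} + \WKL$, exactly as in the proof of \Cref{cor:rt22-wf}; in particular this already gives $\M[G] \models \BSig_2$. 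The main obstacle is bookkeeping rather than new combinatorics: one has to verify that each of the three constructions survives the deletion of its $\mathcal S$-requirements while still deciding $(G \oplus Y)' \uh k$ for unbounded $k$ — the $\SEM$ step in particular still requiring $\BME_\star$, hence that $\WF(\omega^\omega)$ be kept alive through the cohesiveness step — and that the controller $P$ can be taken to model $T_{n-1}$ so that the final transfer applies.
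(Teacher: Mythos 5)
Your overall architecture (chain $\COH$, then $\SEM$, then $\SADS$, keeping $(G\oplus Y)'\leq_T P$ at each stage and transferring $T_{n-1}$ from $\M[P]$ to $T_n$ in the extension via the jump control) matches the paper's. But there is a genuine gap in how you secure $\WF(\omega^\omega)$ for the $\SEM$ step. You apply \Cref{prop:coh-wf} with $\alpha=\omega^\omega$, and that proposition only \emph{preserves} well-foundedness: its hypothesis is $\M\models\RCA_0+\BSig_2+\WF(\alpha)$. Here the ground model is only assumed to satisfy $\RCA_0+T_{n-1}$, and for $n=2$ this is $\RCA_0+\ISig_1$ or $\RCA_0+\BSig_2$, neither of which proves $\WF(\omega^\omega)$ (the $\Pi^1_1$ proof-theoretic ordinal of both theories is exactly $\omega^\omega$). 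So your first step is unavailable precisely in the base case $n=2$, which is the case needed to recover the Cholak--Jockusch--Slaman conservation theorem; ``keeping $\WF(\omega^\omega)$ alive through the cohesiveness step'' presupposes it was alive in $\M$ to begin with.

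The paper's route around this is the one missing idea: apply \Cref{prop:coh-wf} with the \emph{trivial} ordinal $\alpha=1$, and then observe that the jump control $P_0\geq_T(G_0\oplus Y)'$ together with $\M[P_0]\models T_{n-1}$ already makes every $\Sigma^0_n(G_0\oplus Y)$ formula $\Sigma^0_{n-1}(P_0)$, so $\M[G_0]\models T_n$; since $\RCA_0+T_n\vdash\WF(\omega^\omega)$ for $n\geq 2$, the intermediate model satisfies $\WF(\omega^\omega)$ even though $\M$ need not. In other words, the well-foundedness required by $\BME_\star$ in \Cref{lem:sem-wf-forcing-jump} is \emph{derived} in $\M[G_0]$ from the induction upgrade, not preserved from the ground model. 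You actually state the relevant $\Sigma^0_n\to\Sigma^0_{n-1}$ transfer in your last paragraph, but you only apply it to the final model $\M[G]$ instead of to $\M[G_0]$, where it is needed. A secondary, less serious difference: rather than re-running the $\SEM$ and $\SADS$ constructions with the $\S$-requirements deleted (which would require re-verifying the bookkeeping), the paper simply invokes \Cref{prop:sem-wf} with $\alpha=1$ (so that $\omega^{\alpha\ntimes\omega}=\omega^\omega$ is exactly the hypothesis just derived) and \Cref{prop:sads-wf} with $\alpha=0$, making the well-foundedness requirements vacuous with no new construction.
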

\begin{proof}
By Fiori-Carones et al~\cite[Lemma 3.2]{fiori2021isomorphism}, there exists a $P$-computable countable coded model of~$\WKL_0^*$ containing~$Y'$.
In particular, there exists a set~$P_0 \subseteq M$ such that $P \gg P_0 \gg Y'$. Moreover, $\M[P]$ and $\M[P_0]$ both satisfy~$T_{n-1}$.

Let~$\vec{R} = R_0, R_1, \dots$ be defined by $R_x = \{ y : f(x, y) = 1 \}$.
By \Cref{prop:coh-wf} applied to~$\M$, $P_0$ and $\alpha = 1$, there exists some $M$-infinite $\vec{R}$-cohesive set~$G_0 \subseteq M$ such that $P_0 \geq_T (G_0 \oplus Y)'$ and $\M[G_0] \models \RCA_0 + \BSig_2$.  In particular, $f : [G_0]^2 \to 2$ is stable.
Since any~$\Sigma^0_n(G_0 \oplus Y)$ formula is $\Sigma^0_{n-1}(P_0)$, and $\M[P_0] \models T_{n-1}$, $\M[G_0] \models T_n$. In particular, since~$\RCA_0 + T_n \vdash \WF(\omega^\omega$, $\M[G_0] \models \WF(\omega^\omega)$.

One can see~$f : [G_0]^2 \to 2$ as an instance of~$\SEM$.
By \Cref{prop:sem-wf} applied to~$\M[G_0]$, $P$ and $\alpha = 1$, there exists some~$M$-infinite $f$-transitive set~$G_1 \subseteq G_0$ such that $P \geq_T (G_1 \oplus G_0 \oplus Y)'$.
Here again, since any~$\Sigma^0_n(G_1 \oplus G_0 \oplus Y)$ formula is $\Sigma^0_{n-1}(P)$ and $\M[P] \models T_{n-1}$, $\M[G_1 \oplus G_0] \models T_n$. 

One can see $f : [G_1]^2 \to 2$ as an instance of~$\SADS$.
By \Cref{prop:sads-wf} applied to~$\M[G_0 \oplus G_1]$, $P$ and $\alpha = 0$, there exists some~$M$-infinite $f$-homogeneous set~$G \subseteq G_1$ such that $P \geq_T (G \oplus G_1 \oplus G_0 \oplus Y)'$ and $\M[G \oplus G_1 \oplus G_0] \models \RCA_0 + \BSig_2$.
Still by the same argument, $\M[G \oplus G_1 \oplus G_0] \models T_n$.
\end{proof}

\begin{remark}
Contrary to \Cref{prop:coh-wf} and \Cref{prop:sads-wf} in which the well-foundedness hypothesis on the ground model is only used to preserve well-foundedness, in the proof of \Cref{prop:sem-wf}, one exploits $\WF(\omega^\omega)$ to preserve~$\BSig_2$, and one uses $\WF(\omega^{\omega^\omega \times \omega})$ to preserve~$\WF(\omega^\omega)$.
If one starts with a model of~$\ISig_2$, the ground model is therefore also a model of~$\WF(\omega^\omega)$, in which case $\BSig_2$ can be preserved. Thanks to the first-jump control, $\ISig_2$ is actually preserved, and therefore so is $\WF(\omega^\omega)$. 

Since $\ISig_2$ and $\WF(\omega^{\omega^\omega \times \omega})$ are incomparable over~$\RCA_0$, it would be interesting to investigate the common factor between these two statements which is sufficient to preserve~$\WF(\omega^\omega)$.
\end{remark}

\begin{corollary}\label[corollary]{cor:rt22-sig2}
Fix~$n \geq 2$ and let $T_n$ be either $\ISig_n$, or $\BSig_{n+1}$.
Consider a countable model $\M = (M,S) \models \RCA_0 + T_n$ topped by a set~$Y \in S$. Then, for every coloring~$f : [M]^2 \to 2$ in~$S$, there exists $G \subseteq M$ such that 
\begin{enumerate}
    \item $G$ is an $M$-infinite $f$-homogeneous set ;
    \item $\M[G] \models \RCA_0 + T_n$.
\end{enumerate}
\end{corollary}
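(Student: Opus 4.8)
The plan is to derive \Cref{cor:rt22-sig2} from \Cref{prop:rt22-sig2} in exactly the way \Cref{cor:rt22-wf} is derived from \Cref{prop:rt22-wf} (and \Cref{cor:sem-wf} from \Cref{prop:sem-wf}): the only work is to produce, inside the model, a set $P \gg Y'$ with $\M[P] \models \RCA_0 + T_{n-1}$, hand it to the proposition together with the coloring~$f$, and then forget the purely degree-theoretic clause of the conclusion.

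First I would carry out the induction- and collection-level bookkeeping for the first jump. Since $n \geq 2$ and $T_n$ is $\ISig_n$ or $\BSig_{n+1}$, in either case $\M \models \ISig_2$; as $\M$ is topped by~$Y$, every $\Sigma^0_k$ formula over~$\M[Y']$ is $\Sigma^0_{k+1}$ over~$\M$, so $\M[Y'] \models \ISig_1$ — that is, $\M[Y'] \models \RCA_0$, and not merely $\RCA_0^*$ — and moreover $\M[Y'] \models T_{n-1}$ ($\ISig_{n-1}$ surviving from $\ISig_n$, and $\BSig_n$ from $\BSig_{n+1}$). Next, by the $\WKL$-extension results of Harrington, Simpson--Smith and H\'ajek (cf.\ the proof of \Cref{cor:rt22-wf} and Belanger~\cite{Blanger2022ConservationTF}), every countable topped model of $\RCA_0 + T_{n-1}$ $\omega$-extends into a model of $\RCA_0 + T_{n-1} + \WKL$; applied to $\M[Y']$ this yields a set $P \subseteq M$ with $P \gg Y'$ and $\M[P] \models \RCA_0 + T_{n-1}$. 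Finally I would invoke \Cref{prop:rt22-sig2} with this~$P$ and the coloring~$f$: it returns an $M$-infinite $f$-homogeneous set $G \subseteq M$ with $P \geq_T (G \oplus Y)'$ and $\M[G] \models \RCA_0 + T_n$, and dropping the middle clause is precisely the statement to be proved.

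I do not expect a genuine obstacle here: all the substantive forcing is already packaged inside \Cref{prop:coh-wf}, \Cref{prop:sem-wf} and \Cref{prop:sads-wf} and assembled in \Cref{prop:rt22-sig2}. The single point deserving care — and the only place the hypothesis $n \geq 2$ enters at this last step — is that $\M[Y']$ must satisfy \emph{full} $\RCA_0$ and not just $\RCA_0^*$, since \Cref{prop:rt22-sig2} demands $\M[P] \models \RCA_0$; this is exactly what $\ISig_2$ in~$\M$ provides, and it is the one ingredient that distinguishes this argument from the proof of \Cref{cor:rt22-wf}.
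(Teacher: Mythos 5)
Your proposal is correct and follows essentially the same route as the paper: obtain $P \gg Y'$ with $\M[P] \models T_{n-1} + \WKL$ via the Harrington/Simpson--Smith/H\'ajek $\omega$-extension result applied to $\M[Y']$, then feed $P$ and $f$ into \Cref{prop:rt22-sig2} and discard the clause $P \geq_T (G \oplus Y)'$. The only cosmetic difference is that the paper phrases the extension step over $\RCA_0^* + T_{n-1}$ and lets $T_{n-1}$ (which implies $\ISig_1$ for $n \geq 2$) upgrade $\M[P]$ to full $\RCA_0$, whereas you verify $\M[Y'] \models \RCA_0$ directly from $\ISig_2$ in $\M$; both are fine.
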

\begin{proof}
By Harrington; Simpson–Smith; Hajek (see Belanger~\cite{Blanger2022ConservationTF}), every countable topped model of $\RCA_0^* + T_{n-1}$ can be $\omega$-extended into a model of~$\RCA_0^* + T_{n-1} + \WKL$. Since~$\M[Y'] \models \RCA_0^* + T_{n-1}$, there is some $P \subseteq M$ such that $P \gg Y'$ and $\M[P] \models \RCA_0^* + T_{n-1}$.
The conclusion follows from~\Cref{prop:rt22-sig2}.    
\end{proof}

\begin{corollary}[Cholak, Jockusch and Slaman~\cite{cholak_jockusch_slaman_2001}]
Fix~$n \geq 2$ and let $T_n$ be either $\ISig_n$, or $\BSig_{n+1}$.
$\RCA_0 + T_n + \RT^2_2$ is $\Pi^1_1$-conservative over~$\RCA_0 + T_n$.
\end{corollary}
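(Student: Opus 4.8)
The plan is to deduce this exactly as Theorem~\ref{theorem:problem-wf-cons} was deduced from its model-extension hypothesis, but now with $\RCA_0 + T_n$ in place of $\RCA_0 + \BSig_2 + \WF(\alpha)$, feeding in Corollary~\ref{cor:rt22-sig2} as the extension step. The first thing I would record is a soft preservation remark: whether $T_n$ is $\ISig_n$ or $\BSig_{n+1}$, it is a scheme of first-order arithmetic sentences, and the $\omega$-extensions $\M \mapsto \M[G]$ produced by Corollary~\ref{cor:rt22-sig2} leave the first-order part $M$ unchanged; consequently $T_n$, just like $\RCA_0$, holds in every member of an ascending $\omega$-chain of such extensions and in the union of the chain. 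I would also note that toppedness propagates: if $\M$ is topped by $Y$, then $\M[G]$ is topped by $Y \oplus G$, so the hypotheses of Corollary~\ref{cor:rt22-sig2} remain available at every stage.

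Next, assume $\RCA_0 + T_n \not\vdash \forall X\,\phi(X)$ for some arithmetic $\phi$. By the completeness theorem and downward L\"owenheim--Skolem, fix a countable model $\M = (M,S) \models \RCA_0 + T_n + \neg\phi(A)$ with $A \in S$, and assume $\M$ is topped by $A$. Using Corollary~\ref{cor:rt22-sig2} repeatedly, together with a standard dovetailing over all pairs $(i, f)$ with $f$ a coloring $[M]^2 \to 2$ appearing in $\M_i$, I would build an ascending sequence $\M = \M_0 \subseteq \M_1 \subseteq \M_2 \subseteq \cdots$ of countable topped models of $\RCA_0 + T_n$ such that every such $f$ acquires an $M$-infinite $f$-homogeneous set in some later $\M_j$. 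Then $\M_\omega = \bigcup_i \M_i$ satisfies $\RCA_0 + T_n$ by the preservation remark, satisfies $\RT^2_2$ since every instance has been handled, and still satisfies $\neg\phi(A)$ because $\phi$ is arithmetic, $A$ persists in the model, and the first-order part is unchanged throughout. Hence $\RCA_0 + T_n + \RT^2_2 \not\vdash \forall X\,\phi(X)$, giving the conservation.

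I do not expect a genuine obstacle: all the combinatorial work has already been absorbed into Corollary~\ref{cor:rt22-sig2}, and what is left is the routine ``build a chain, take its union'' bookkeeping identical to the proof of Theorem~\ref{theorem:problem-wf-cons}. The only points deserving a sentence of care are (i) that toppedness is maintained through the $\omega$-extensions, so Corollary~\ref{cor:rt22-sig2} keeps applying, and (ii) that an $\RT^2_2$-instance is precisely a $2$-colouring of pairs, so Corollary~\ref{cor:rt22-sig2} already covers exactly the instances needing solutions, with no detour through $\RT^2_k$ for larger $k$ required.
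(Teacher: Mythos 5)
Your proposal is correct and is exactly the deduction the paper intends: the corollary follows from \Cref{cor:rt22-sig2} by rerunning the chain-and-union argument of \Cref{theorem:problem-wf-cons} with $\RCA_0 + T_n$ in place of $\RCA_0 + \BSig_2 + \WF(\alpha)$, using that toppedness propagates to $\M[G]$ (topped by $Y \oplus G$) and that instances of $T_n$ are arithmetic with parameters appearing at some finite stage, hence absolute in the union. The two points you flag as needing care are precisely the right ones, and no further argument is required.
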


\section{Open questions}\label[section]{sect:open-questions}

Many open questions remain around the first-order part of Ramsey's theorem for pairs and its consequences. The first and most important question being the following:

\begin{question}
Is $\RCA_0 + \RT^2_2$ $\Pi^1_1$-conservative over~$\RCA_0 + \BSig_2$?
\end{question}

For every $\alpha < \epsilon_0$, $\RCA_0 + \BSig_2 + \WF(\alpha) + \WKL + \COH$ is~$\Pi^1_1$
conservative over~$\RCA_0 + \BSig_2 + \WF(\alpha)$. The situation seems more complex for~$\ADS$ and~$\EM$, 
which both seem to require $\WF(\sup_{\beta < \alpha} \omega^{\beta \times \omega})$ to preserve $\WF(\alpha)$.

\begin{question}
For which~$\alpha < \epsilon_0$ is $\RCA_0 + \WF(\alpha) + \RT^2_2$ $\Pi^1_1$-conservative over~$\RCA_0 + \BSig_2 + \WF(\alpha)$?
\end{question}

The proofs of \Cref{prop:sem-wf} and \Cref{prop:sads-wf} required the ground model to satisfy~$\WF(\omega^\alpha)$ to preserve~$\WF(\alpha)$.
It is natural to wonder whether this assumption is necessary. One way to formalize this idea would be to prove that $\RCA_0 + \RT^2_2 \rightarrow \forall n (\WF(\omega_n) \rightarrow  \WF(\omega_{n+1}))$, where $\omega_0 = 1$ and $\omega_{n+1} = \omega^{\omega_n}$. This is not the case, as $\RCA_0 + \RT^2_2$ is $\forall \Pi^0_3$-conservative over~$\RCA_0$, hence does not prove $\WF(\omega^\omega)$. One could also wonder whether $\RCA_0 + \RT^2_2$ proves some weaker well-foundedness closure properties which do not already follow from~$\RCA_0 + \BSig_2$. Here again, the answer is negative. Le Hou\'erou and Levy Patey constructed a model $\M = (M, S)$ of $\RCA_0 + \RT^2_2$ in which $\{ b \in M : \M \models \WF(\omega^b) \} = \sup \{ a \cdot k : k \in \omega \}$ for some~$a \in M \setminus \omega$. In other words, this cut has no better closure properties than additivity [personal communication]. This result is tight, as $\RCA_0 \vdash \forall a(\WF(\omega^a) \rightarrow \WF(\omega^{2a}))$. Note that one can also show that $\RCA_0 + \RT^2_2$ does not prove $\forall n (\WF(\omega^n) \rightarrow \WF(\omega^{2^n}))$ using proof length. Indeed, by Ko{\l}odziejczyk, Wong and Yokoyama~\cite{kolodziejczyk2023ramsey}, $\RCA_0 + \RT^2_2$ does not have speedup over~$\RCA_0$ for~$\forall \Pi^0_3$ consequences, but in an incoming paper, Ko{\l}odziejczyk and Yokoyama [personal communication] proved that $\RCA_0 + \forall n (\WF(\omega^n) \rightarrow \WF(\omega^{2^n}))$ has super-exponential speedup over~$\RCA_0$ for~$\forall \Pi^0_3$ consequences.



\bigskip
\begin{center}
\textbf{Acknowledgements}
\end{center}
The authors are thankful to Leszek Ko{\l}odziejczyk and the anonymous reviewer for their careful reading and improvement suggestions.
Yokoyama is partially supported by JSPS KAKENHI grant numbers JP19K03601, JP21KK0045 and JP23K03193.

\bibliographystyle{plain}
\bibliography{biblio}

@book{simpson_2009, place={Cambridge}, edition={2}, series={Perspectives in Logic}, title={Subsystems of Second Order Arithmetic}, DOI={10.1017/CBO9780511581007}, publisher={Cambridge University Press}, author={Simpson, Stephen G.}, year={2009}, collection={Perspectives in Logic}}

@article{cholak_jockusch_slaman_2001, title={On the strength of {R}amsey's theorem for pairs}, volume={66}, DOI={10.2307/2694910}, number={1}, journal={The Journal of Symbolic Logic}, publisher={Cambridge University Press}, author={Cholak, Peter A. and Jockusch, Carl G. and Slaman, Theodore A.}, year={2001}, pages={1–55}}

@article{Hirschfeldt2007CombinatorialPW,
  title={Combinatorial principles weaker than {R}amsey's Theorem for pairs},
  author={Denis R. Hirschfeldt and Richard A. Shore},
  journal={Journal of Symbolic Logic},
  year={2007},
  volume={72},
  pages={171 - 206}
}

@inproceedings{Blanger2022ConservationTF,
  title={Conservation theorems for the Cohesiveness Principle},
  note = {Available at \url{https://arxiv.org/abs/2212.13011}},
  author={David R. B{\'e}langer},
  year={2022}
}

@article {chong2017inductive,
    AUTHOR = {Chong, C. T. and Slaman, Theodore A. and Yang, Yue},
     TITLE = {The inductive strength of {R}amsey's {T}heorem for {P}airs},
   JOURNAL = {Adv. Math.},
  FJOURNAL = {Advances in Mathematics},
    VOLUME = {308},
      YEAR = {2017},
     PAGES = {121--141},
      ISSN = {0001-8708},
   MRCLASS = {03B30 (03D80 03F30 03F35 03H15 05D10)},
  MRNUMBER = {3600057},
MRREVIEWER = {Denis R. Hirschfeldt},
       DOI = {10.1016/j.aim.2016.11.036},
       URL = {https://doi.org/10.1016/j.aim.2016.11.036},
}

@article {chong2021pi11,
    AUTHOR = {Chong, C. T. and Slaman, Theodore A. and Yang, Yue},
     TITLE = {{$\Pi^1_1$}-conservation of combinatorial principles weaker
              than {R}amsey's theorem for pairs},
   JOURNAL = {Adv. Math.},
  FJOURNAL = {Advances in Mathematics},
    VOLUME = {230},
      YEAR = {2012},
    NUMBER = {3},
     PAGES = {1060--1077},
      ISSN = {0001-8708},
   MRCLASS = {03B30 (03D80 03F30 03F35 05D10)},
  MRNUMBER = {2921172},
MRREVIEWER = {Denis R. Hirschfeldt},
       DOI = {10.1016/j.aim.2012.02.025},
       URL = {https://doi.org/10.1016/j.aim.2012.02.025},
}

@incollection {kolo2021search,
    AUTHOR = {Ko{\l}odziejczyk, Leszek Aleksander and Yokoyama, Keita},
     TITLE = {In search of the first-order part of {R}amsey's theorem for
              pairs},
 BOOKTITLE = {Connecting with computability},
    SERIES = {Lecture Notes in Comput. Sci.},
    VOLUME = {12813},
     PAGES = {297--307},
 PUBLISHER = {Springer, Cham},
      YEAR = {[2021] \copyright 2021},
   MRCLASS = {03B30 (03F35)},
  MRNUMBER = {4347266},
       DOI = {10.1007/978-3-030-80049-9\_27},
       URL = {https://doi.org/10.1007/978-3-030-80049-9_27},
}

@book {dzhafarov2022reverse,
    AUTHOR = {Dzhafarov, Damir D. and Mummert, Carl},
     TITLE = {Reverse mathematics---problems, reductions, and proofs},
    SERIES = {Theory and Applications of Computability},
 PUBLISHER = {Springer, Cham},
      YEAR = {[2022] \copyright 2022},
     PAGES = {xix+488},
      ISBN = {978-3-031-11366-6; 978-3-031-11367-3},
   MRCLASS = {03-02 (03B30 03F35)},
  MRNUMBER = {4472209},
       DOI = {10.1007/978-3-031-11367-3},
       URL = {https://doi.org/10.1007/978-3-031-11367-3},
}

@article {hirschfeldt2017slicing,
    AUTHOR = {Eastaugh, Benedict},
     TITLE = {{\it {S}licing the truth: on the computability theoretic and
              reverse mathematical analysis of combinatorial principles}
              [book review of {MR}3244278]},
   JOURNAL = {Studia Logica},
  FJOURNAL = {Studia Logica. An International Journal for Symbolic Logic},
    VOLUME = {105},
      YEAR = {2017},
    NUMBER = {4},
     PAGES = {873--879},
      ISSN = {0039-3215},
   MRCLASS = {03B30 (00A17 03F35)},
  MRNUMBER = {3673171},
       DOI = {10.1007/s11225-017-9740-1},
       URL = {https://doi.org/10.1007/s11225-017-9740-1},
}

@article {jockusch1972ramsey,
    AUTHOR = {Jockusch, Jr., Carl G.},
     TITLE = {Ramsey's theorem and recursion theory},
   JOURNAL = {J. Symbolic Logic},
  FJOURNAL = {The Journal of Symbolic Logic},
    VOLUME = {37},
      YEAR = {1972},
     PAGES = {268--280},
      ISSN = {0022-4812},
   MRCLASS = {02F35 (02F50)},
  MRNUMBER = {376319},
MRREVIEWER = {John W. Berry},
       DOI = {10.2307/2272972},
       URL = {https://doi.org/10.2307/2272972},
}

@article{liu2012rt22,
  title={{RT}22 does not imply {WKL}0},
  author={Liu, Jiayi},
  journal={The Journal of Symbolic Logic},
  volume={77},
  number={2},
  pages={609--620},
  year={2012},
  publisher={Cambridge University Press}
}

@incollection {seetapun1995strength,
    AUTHOR = {Seetapun, David and Slaman, Theodore A.},
     TITLE = {On the strength of {R}amsey's theorem},
      NOTE = {Special Issue: Models of arithmetic},
   JOURNAL = {Notre Dame J. Formal Logic},
  FJOURNAL = {Notre Dame Journal of Formal Logic},
    VOLUME = {36},
      YEAR = {1995},
    NUMBER = {4},
     PAGES = {570--582},
      ISSN = {0029-4527},
   MRCLASS = {03F35 (03C62)},
  MRNUMBER = {1368468},
MRREVIEWER = {Roman Murawski},
       DOI = {10.1305/ndjfl/1040136917},
       URL = {https://doi.org/10.1305/ndjfl/1040136917},
}

@book {hirst1987combinatorics,
    AUTHOR = {Hirst, Jeffry Lynn},
     TITLE = {C{OMBINATORICS} {IN} {SUBSYSTEMS} {OF} {SECOND} {ORDER}
              {ARITHMETIC}},
      NOTE = {Thesis (Ph.D.)--The Pennsylvania State University},
 PUBLISHER = {ProQuest LLC, Ann Arbor, MI},
      YEAR = {1987},
     PAGES = {153},
   MRCLASS = {Thesis},
  MRNUMBER = {2635978},
       URL =
              {http://gateway.proquest.com/openurl?url_ver=Z39.88-2004&rft_val_fmt=info:ofi/fmt:kev:mtx:dissertation&res_dat=xri:pqdiss&rft_dat=xri:pqdiss:8728018},
}

@article {kreuzer2016principles,
    AUTHOR = {Kreuzer, Alexander P. and Yokoyama, Keita},
     TITLE = {On principles between {$\Sigma_1$}- and
              {$\Sigma_2$}-induction, and monotone enumerations},
   JOURNAL = {J. Math. Log.},
  FJOURNAL = {Journal of Mathematical Logic},
    VOLUME = {16},
      YEAR = {2016},
    NUMBER = {1},
     PAGES = {1650004, 21},
      ISSN = {0219-0613},
   MRCLASS = {03F30 (03B30)},
  MRNUMBER = {3518781},
MRREVIEWER = {Fran\c{c}ois G. Dorais},
       DOI = {10.1142/S0219061316500045},
       URL = {https://doi.org/10.1142/S0219061316500045},
}

@article {patey2018proof,
    AUTHOR = {Patey, Ludovic and Yokoyama, Keita},
     TITLE = {The proof-theoretic strength of {R}amsey's theorem for pairs
              and two colors},
   JOURNAL = {Adv. Math.},
  FJOURNAL = {Advances in Mathematics},
    VOLUME = {330},
      YEAR = {2018},
     PAGES = {1034--1070},
      ISSN = {0001-8708},
   MRCLASS = {03B30 (03C62 03D80 03F35 03H15 05D10)},
  MRNUMBER = {3787563},
MRREVIEWER = {Fran\c{c}ois G. Dorais},
       DOI = {10.1016/j.aim.2018.03.035},
       URL = {https://doi.org/10.1016/j.aim.2018.03.035},
}

@article {erdos1964representation,
    AUTHOR = {Erd\H{o}s, P. and Moser, L.},
     TITLE = {On the representation of directed graphs as unions of
              orderings},
   JOURNAL = {Magyar Tud. Akad. Mat. Kutat\'{o} Int. K\"{o}zl.},
  FJOURNAL = {A Magyar Tudom\'{a}nyos Akad\'{e}mia. Matematikai Kutat\'{o} Int\'{e}zet\'{e}nek
              K\"{o}zlem\'{e}nyei},
    VOLUME = {9},
      YEAR = {1964},
     PAGES = {125--132},
      ISSN = {0541-9514},
   MRCLASS = {05.60},
  MRNUMBER = {168494},
MRREVIEWER = {J. W. Moon},
}

@article {bovykin2017strength,
    AUTHOR = {Bovykin, Andrey and Weiermann, Andreas},
     TITLE = {The strength of infinitary {R}amseyan principles can be
              accessed by their densities},
   JOURNAL = {Ann. Pure Appl. Logic},
  FJOURNAL = {Annals of Pure and Applied Logic},
    VOLUME = {168},
      YEAR = {2017},
    NUMBER = {9},
     PAGES = {1700--1709},
      ISSN = {0168-0072},
   MRCLASS = {03F25 (03C62 03H15 05D10)},
  MRNUMBER = {3659408},
MRREVIEWER = {Alberto Marcone},
       DOI = {10.1016/j.apal.2017.03.005},
       URL = {https://doi.org/10.1016/j.apal.2017.03.005},
}

@article {lerman2013separating,
    AUTHOR = {Lerman, Manuel and Solomon, Reed and Towsner, Henry},
     TITLE = {Separating principles below {R}amsey's theorem for pairs},
   JOURNAL = {J. Math. Log.},
  FJOURNAL = {Journal of Mathematical Logic},
    VOLUME = {13},
      YEAR = {2013},
    NUMBER = {2},
     PAGES = {1350007, 44},
      ISSN = {0219-0613},
   MRCLASS = {03F35 (03B30)},
  MRNUMBER = {3125903},
MRREVIEWER = {Alberto Marcone},
       DOI = {10.1142/S0219061313500074},
       URL = {https://doi.org/10.1142/S0219061313500074},
}

@inproceedings{yokoyama2010conservativity,
  title={On conservativity for theories in second order arithmetic},
  author={Yokoyama, Keita},
  booktitle={Proceedings of the 10th Asian Logic Conference},
  pages={375--386},
  year={2010},
  organization={World Scientific}
}

@article {kreuzer2012primitive,
    AUTHOR = {Kreuzer, Alexander P.},
     TITLE = {Primitive recursion and the chain antichain principle},
   JOURNAL = {Notre Dame J. Form. Log.},
  FJOURNAL = {Notre Dame Journal of Formal Logic},
    VOLUME = {53},
      YEAR = {2012},
    NUMBER = {2},
     PAGES = {245--265},
      ISSN = {0029-4527},
   MRCLASS = {03F35 (03B30)},
  MRNUMBER = {2925280},
MRREVIEWER = {Mariko Yasugi},
       DOI = {10.1215/00294527-1715716},
       URL = {https://doi.org/10.1215/00294527-1715716},
}

@book {mileti2004partition,
    AUTHOR = {Mileti, Joseph Roy},
     TITLE = {Partition theorems and computability theory},
      NOTE = {Thesis (Ph.D.)--University of Illinois at Urbana-Champaign},
 PUBLISHER = {ProQuest LLC, Ann Arbor, MI},
      YEAR = {2004},
     PAGES = {76},
      ISBN = {978-0496-13959-0},
   MRCLASS = {Thesis},
  MRNUMBER = {2706695},
       URL =
              {http://gateway.proquest.com/openurl?url_ver=Z39.88-2004&rft_val_fmt=info:ofi/fmt:kev:mtx:dissertation&res_dat=xri:pqdiss&rft_dat=xri:pqdiss:3153383},
}

@article {monin2021srt22,
    AUTHOR = {Monin, Benoit and Patey, Ludovic},
     TITLE = {{$\mathsf{SRT}^2_2$} does not imply {$\mathsf{RT}^2_2$} in
              {$\omega$}-models},
   JOURNAL = {Adv. Math.},
  FJOURNAL = {Advances in Mathematics},
    VOLUME = {389},
      YEAR = {2021},
     PAGES = {Paper No. 107903, 32},
      ISSN = {0001-8708},
   MRCLASS = {03B30 (03F35 05D10)},
  MRNUMBER = {4288219},
MRREVIEWER = {Carl Mummert},
       DOI = {10.1016/j.aim.2021.107903},
       URL = {https://doi.org/10.1016/j.aim.2021.107903},
}

@incollection {hirschfeldt2008strength,
    AUTHOR = {Hirschfeldt, Denis R. and Jockusch, Jr., Carl G. and
              Kjos-Hanssen, Bj{\o}rn and Lempp, Steffen and Slaman, Theodore
              A.},
     TITLE = {The strength of some combinatorial principles related to
              {R}amsey's theorem for pairs},
 BOOKTITLE = {Computational prospects of infinity. {P}art {II}. {P}resented
              talks},
    SERIES = {Lect. Notes Ser. Inst. Math. Sci. Natl. Univ. Singap.},
    VOLUME = {15},
     PAGES = {143--161},
 PUBLISHER = {World Sci. Publ., Hackensack, NJ},
      YEAR = {2008},
   MRCLASS = {03D28 (03B30 03F35)},
  MRNUMBER = {2449463},
MRREVIEWER = {A. Ku\v{c}era},
       DOI = {10.1142/9789812796554\_0008},
       URL = {https://doi.org/10.1142/9789812796554_0008},
}

@article {jockusch1993cohesive,
    AUTHOR = {Jockusch, Carl and Stephan, Frank},
     TITLE = {A cohesive set which is not high},
   JOURNAL = {Math. Logic Quart.},
  FJOURNAL = {Mathematical Logic Quarterly},
    VOLUME = {39},
      YEAR = {1993},
    NUMBER = {4},
     PAGES = {515--530},
      ISSN = {0942-5616},
   MRCLASS = {03D30 (03D55)},
  MRNUMBER = {1270396},
MRREVIEWER = {Xi Zhong Zheng},
       DOI = {10.1002/malq.19930390153},
       URL = {https://doi.org/10.1002/malq.19930390153},
}

@article{fiori2021isomorphism,
  title={An isomorphism theorem for models of {W}eak {K}\"onig's {L}emma without primitive recursion},
  author={Fiori-Carones, Marta and Ko{\l}odziejczyk, Leszek Aleksander and Wong, Tin Lok and Yokoyama, Keita},
  journal={Journal of the European Mathematical Society},
  note = {To appear.},
  year={2021}
}

@article {chong1990degree,
    AUTHOR = {Chong, C. T. and Mourad, K. J.},
     TITLE = {The degree of a {$\Sigma_n$} cut},
   JOURNAL = {Ann. Pure Appl. Logic},
  FJOURNAL = {Annals of Pure and Applied Logic},
    VOLUME = {48},
      YEAR = {1990},
    NUMBER = {3},
     PAGES = {227--235},
      ISSN = {0168-0072},
   MRCLASS = {03F30 (03D25 03D30 03H15)},
  MRNUMBER = {1073220},
MRREVIEWER = {Thomas McLaughlin},
       DOI = {10.1016/0168-0072(90)90021-S},
       URL = {https://doi.org/10.1016/0168-0072(90)90021-S},
}

@article {slaman2004bounding,
    AUTHOR = {Slaman, Theodore A.},
     TITLE = {{$\Sigma_n$}-bounding and {$\Delta_n$}-induction},
   JOURNAL = {Proc. Amer. Math. Soc.},
  FJOURNAL = {Proceedings of the American Mathematical Society},
    VOLUME = {132},
      YEAR = {2004},
    NUMBER = {8},
     PAGES = {2449--2456},
      ISSN = {0002-9939},
   MRCLASS = {03F30 (03H15)},
  MRNUMBER = {2052424},
MRREVIEWER = {Constantine Dimitracopoulos},
       DOI = {10.1090/S0002-9939-04-07294-6},
       URL = {https://doi.org/10.1090/S0002-9939-04-07294-6},
}

@incollection {simpson1986factorization,
    AUTHOR = {Simpson, Stephen G. and Smith, Rick L.},
     TITLE = {Factorization of polynomials and {$\Sigma^0_1$} induction},
      NOTE = {Special issue: second Southeast Asian logic conference
              (Bangkok, 1984)},
   JOURNAL = {Ann. Pure Appl. Logic},
  FJOURNAL = {Annals of Pure and Applied Logic},
    VOLUME = {31},
      YEAR = {1986},
    NUMBER = {2-3},
     PAGES = {289--306},
      ISSN = {0168-0072},
   MRCLASS = {03F35 (12E05)},
  MRNUMBER = {854297},
MRREVIEWER = {M. Dubiel},
       DOI = {10.1016/0168-0072(86)90074-6},
       URL = {https://doi.org/10.1016/0168-0072(86)90074-6},
}

@incollection {hajek1993interpretability,
    AUTHOR = {H\'{a}jek, Petr},
     TITLE = {Interpretability and fragments of arithmetic},
 BOOKTITLE = {Arithmetic, proof theory, and computational complexity
              ({P}rague, 1991)},
    SERIES = {Oxford Logic Guides},
    VOLUME = {23},
     PAGES = {185--196},
 PUBLISHER = {Oxford Univ. Press, New York},
      YEAR = {1993},
   MRCLASS = {03F25 (03F30 03F35)},
  MRNUMBER = {1236462},
MRREVIEWER = {Roman Murawski},
}

@incollection {fernandes2017analysis,
    AUTHOR = {Fernandes, Ant\'{o}nio M. and Ferreira, Fernando and Ferreira,
              Gilda},
     TITLE = {Analysis in weak systems},
 BOOKTITLE = {Logic and computation},
    SERIES = {Tributes},
    VOLUME = {33},
     PAGES = {231--261},
 PUBLISHER = {Coll. Publ., [London]},
      YEAR = {2017},
   MRCLASS = {03F35},
  MRNUMBER = {3822854},
}

@article {patey2018partial,
    AUTHOR = {Patey, Ludovic},
     TITLE = {Partial orders and immunity in reverse mathematics},
   JOURNAL = {Computability},
  FJOURNAL = {Computability. The Journal of the Association CiE},
    VOLUME = {7},
      YEAR = {2018},
    NUMBER = {4},
     PAGES = {323--339},
      ISSN = {2211-3568},
   MRCLASS = {03B30 (05D10 06A07)},
  MRNUMBER = {3877170},
MRREVIEWER = {Alberto Marcone},
       DOI = {10.3233/com-170071},
       URL = {https://doi.org/10.3233/com-170071},
}

@book{hajek2017metamathematics,
  title={Metamathematics of first-order arithmetic},
  author={H{\'a}jek, Petr and Pudl{\'a}k, Pavel},
  volume={3},
  year={2017},
  publisher={Cambridge University Press}
}

@article{kolodziejczyk2023ramsey,
  title={Ramsey’s theorem for pairs, collection, and proof size},
  author={Ko{\l}odziejczyk, Leszek Aleksander and Wong, Tin Lok and Yokoyama, Keita},
  journal={Journal of Mathematical Logic},
  pages={2350007},
  year={2023},
  publisher={World Scientific}
}

\end{document}